\documentclass{amsart} 
\usepackage{amssymb,times, amscd,amsmath,amsthm, xypic}
\author{Shoji Yokura$^{*}$}
\address
{Department of Mathematics and Computer Science, 
Faculty of Science, 
Kagoshima University, 21-35 Korimoto 1-chome, Kagoshima 890-0065, Japan}
\email {yokura@sci.kagoshima-u.ac.jp}
\title
{ Motivic Characteristic Classes$^{\dagger}$}
\thanks {$\dagger$ An expanded version of the author's talk at the workshop ``Topology of Stratified Spaces" held at MSRI, Berkeley, from September 8 to 12, 2008 }
\thanks {* Partially supported by Grant-in-Aid for Scientific Research
(No. 21540088), the Ministry of Education, Culture, Sports, Science and Technology (MEXT), and JSPS Core-to-Core Program 18005, Japan}
\keywords{}


\begin{document} 
\numberwithin{equation}{section}
\newtheorem{thm}[equation]{Theorem}
\newtheorem{pro}[equation]{Proposition}
\newtheorem{prob}[equation]{Problem}
\newtheorem{cor}[equation]{Corollary}
\newtheorem{con}[equation]{Conjecture}
\newtheorem{ob}[equation]{Observation}
\newtheorem{lem}[equation]{Lemma}
\theoremstyle{definition}
\newtheorem{ex}[equation]{Example}
\newtheorem{defn}[equation]{Definition}
\newtheorem{rem}[equation]{Remark}
\renewcommand{\rmdefault}{ptm}
\def\alp{\alpha}
\def\be{\beta}
\def\jeden{1\hskip-3.5pt1}
\def\om{\omega}
\def\bigstar{\mathbf{\star}}
\def\ep{\epsilon}
\def\vep{\varepsilon}
\def\Om{\Omega}
\def\la{\lambda}
\def\La{\Lambda}
\def\si{\sigma}
\def\Si{\Sigma}
\def\Cal{\mathcal}
\def\ga{\gamma}
\def\Ga{\Gamma}
\def\de{\delta}
\def\De{\Delta}
\def\bF{\mathbb{F}}
\def\bH{\mathbb H}
\def\bPH{\mathbb {PH}}
\def \bB{\mathbb B}
\def \bA{\mathbb A}
\def \bOB{\mathbb {OB}}
\def \bM{\mathbb M}
\def \bOM{\mathbb {OM}}
\def \calB{\mathcal B}
\def \bK{\mathbb K}
\def \bG{\mathbf G}
\def \bL{\mathbf L}
\def\bN{\mathbb N}
\def\bR{\mathbb R}
\def\bP{\mathbb P}
\def\bZ{\mathbb Z}
\def\bC{\mathbb  C}
\def \bQ{\mathbb Q}
\def\op{\operatorname}

\maketitle

\begin{abstract} Motivic characteristic classes of possibly singular algebraic varieties are homology class versions of motivic characteristics, not classes in the so-called motivic (co) homology. This paper is a survey on them with more emphasis on capturing infinitude finitely and on the motivic nature, in other words, the scissor relation or additivity.
\end{abstract}

\section {Introduction} 

Characteristic classes are usually cohomological objects defined on real or complex vector bundles, thus for any smooth manifold, characteristic classes of it are defined through its tangent bundle. For the real vector bundles, Stiefel--Whitney classes and Pontraygin classes are fundamental ones and in the complex vector bundles the Chern class is the fundamental one. When it comes to a non-manifold space, such as a singular real or complex algebraic or analytic variety, one cannot talk about its cohomological characteristic class, unlike the smooth case, simply because one cannot define its tangent bundle, although one can define some reasonable substitutes, such as tangent cone, tangent star cone, which are not vector bundles, rather stratified vector bundles. In the 1960's people started to define characteristic classes on algebraic varieties as homological objects, not through some kind of vector bundles considered on them, but as higher analogues of some geometrically important invariants such as Euler--Poincar\'e characteristic, signature, etc. I suppose that the theory of characteristic classes of singular spaces start with Thom's $L$-class for oriented $PL$-manifolds \cite{Thom}, whereas Sullivan's Stiefel--Whitney classes and the so-called Deligne--Grothendieck conjecture about the existence of Chern homology classes started the whole story of \emph{capturing characteristic classes of singular spaces as natural transformations}, more precisely as a natural transformation from a certain covariant functor to the homology functor.  The Deligne--Grothendieck conjecture seems to be based on Grothendieck's idea or modifying his conjecture on a \emph{Riemann--Roch type formula} concerning the constructible \'etale sheaves and Chow rings (cf. \cite[Part II, note(87$_1$), p.361 ff.]{Grot}) and made in the well-known form by P. Deligne later. R. MacPherson \cite{M1} gave a positive answer to the Deligne--Grothendieck conjecture and,  motivated by this solution,  P. Baum, W. Fulton and R. MacPherson \cite{BFM} further established the singular Riemann--Roch Theorem, which is a singular version of Grothendieck--Riemann--Roch, which is a functorial extension of the celebrated Hirzebruch--Riemann--Roch (abbr.  HRR) \cite{Hi}. HRR is the very origin of Atiyah--Singer Index Theorem.\\

The main resuts of \cite{BSY1} (announced in \cite{BSY2}) are the following:
\begin{itemize}
\item {\bf ``Motivic" characteristic classes of algebraic varieties}, which is a class version of the motivic characteristic. (Note that this ``motivic class" is \underline {not} a class in the so-called ``motivic cohomology" in algebraic/arithmetic geometry.)\\

\item Motivic characteristic classes in a sense give rise to {\bf a ``unification" of three well-known important characteristic homology classes}:

\begin{enumerate}

\item MacPherson's Chern class transformation \cite{M1} (cf. \cite{M2}, \cite{Sw}, \cite{BrS}), 

\item Baum--Fulton--MacPherson's Riemann--Roch transformation \cite{BFM}

\item Goresky--MacPherson's L-homology class \cite {GM} or Cappell--Shaneson's L-homology class \cite{CS1}(cf. \cite{CS2})\\
\end{enumerate}
\end{itemize}

This unification result can be understood  to be good enough to consider our motivic characteristic classes  as a positive solution to the following MacPherson's question or comment (written at the end of his survey paper of 1973 \cite{M2}):\\

\emph{``It remains to be seen whether there is a unified theory of characteristic classes of singular varieties like the classical one outlined above."}\\

It unifies ``only three" characteristic classes, though, but so far it seems to be a reasonble one.

The purpose of the present paper is mainly to explain the above results of \cite{BSY1}(also see \cite{SY}) with putting more emphasis on ``motivic nature" of motivic characteristic classes. In particular, we show that our motivic characteristic class is a very natural class version of the so-called motivic characteristic, just like the way A. Grothendieck extened HRR to Grothendieck --Riemann--Roch. For that, we go back all the way to the natural numbers, which would be thought of  as the very ``origin" of \emph {characteristic} or \emph {characteristic class}. We na\"\i vely start with the simple counting of finite sets. Then we want to count infinite sets as if we are still doing the same way of counting finite sets,  and want to understand motivic characteristic classes as higher class versions of this unusual ``counting infinite sets", where infinite sets are complex algebraic varieties. (The usual counting of infinite sets, forgetting the structure of a variety at all, lead us into ``mathematics of infinity".)  The key is Deligne's mixed Hodge structures \cite{De1, De2} or more generally Saito's deep theory of mixed Hodge modules \cite{Sai2}, etc. \\

As to the Mixed Hodge Hodules (abbr. MHM), in \cite{Sch3} J\"org Sch\"urmann gives a very nice introduction and overview about recent developments on the interaction of theories of characteristic classes and Mixed Hodge Theory for singular spaces in the complex algebraic context with MHM as a crucial and fundamental key. For example, a study  of characteristic classes of the intersection homological Hodge modules have been done in a series of papers by Sylvain Cappell, Anatoly Libgober, Laurentiu Maxim, J\"org Sch\"urmann and Julius Shaneson \cite {CLMS1, CLMS2, CMS1, CMS2, CMSS, MS, MS2} (as to \cite{MS2} also see \cite{Yokura-milnor}). \\

The very recent book by C. Peters and J. Steenbrink \cite{PS} seems to be a most up-to-date survey on mixed Hodge structures and Saito's mixed Hodge modules. The Tata Lecture Notes by C. Peters \cite{P} (which is a condensed version of \cite{PS}) gives a nice introduction to Hodge Theory with more emphasis on the motivic nature \footnote{J. Sch\"urmann informed me of the book \cite{PS} and the lecture \cite{P} at the workshop. }.

\section{Preliminaries: from natural numbers to genera}

First of all let us consider counting the number of elements of finite sets, i.e., natural numbers. Let $\Cal {FSET}$ be the category of finite sets and maps among them. For an object $X \in \Cal {FSET}$, let
$$ c(X) \in \bZ $$
be the number of the elements of $X$,  which is usually denoted by $|X|$ ($\in \bN$) and called the cardinal number, or cardinality of $X$. It satisfies the following four properties on the category $\Cal {FSET}$ of finite sets: 
\begin{enumerate}

\item
$\displaystyle  X \cong X'$ (bijection or equipotent) $\Longrightarrow$ $c(X) = c(X')$,

\item
$c(X) = c(X \setminus Y) + c(Y)$ for  $Y \subset X$, 

\item
$c(X \times Y) = c(X)\cdot c(Y)$,

\item
$c(pt) = 1$. \quad (Here $pt$ denotes one point.) 
\end{enumerate}

\begin{rem} \label{c-rem}Clearly these four properties characterize the counting $c(X)$. Also note that if $c(X) \in \bZ$ satisfies  (1) -- (3) without (4), then we have $c(pt) = 0$ or $c(pt) = 1$.
If $c(pt) = 0$, then it follows from (2) (or (1) and (3)) that $c(X) = 0$ for any finite set $X$.
If $c(pt) =1$, then it follows from (2) that  $c(X) = $ the number of elements of a finite set $X$.
\end{rem}

\begin{rem}  When it comes to infinite sets, then the cardinality still satisfies the above four properties, however the usual ``computation" does not work any longer; such as $a^2 =a \Longrightarrow a = 0 \, \, \, \text {or} \,  \, \, 1$. For example, for any natural number $n$,
$$c(\bR^n) = c(\bR) \, \text {, i.e., denoted by, } \, \aleph^n = \aleph.$$
Namely, we enter \emph{the mathematics of infinity}.  Generalizing the above, we could still consider the above ``counting" on the bigger category $\Cal {SET}$ of sets, i.e., a set can be infinite, and $c(X)$ in a certain integral domain. However, one can see that there does not exist such counting; in fact one can see that if such a counting exists so that (1), (2) and (3) are satisfied, then it automatically follows that $c(pt) = 0$, which contradicts to the property (4). Thus the upshot is:\

\emph{If we consider the above counting on the category $\Cal {SET}$ of not-necessarily-finite sets, then such a counting automatically has to be a trivial one, i.e., $c(X) = 0$ for any set $X$ !}

\emph{However, if we consider sets having superstructures on the infrastructure (= set) and the property (1) is replaced by the invariance of the superstructures, then we will obtain more reasonable countings which are finite numbers, thus we can avoid the mysterious ``mathematics of infinity" and  extend the usual counting $c(X)$ of finite sets very naturally and na\" \i vely. This is nothing but what is all about the Euler characteristic, genera, etc., which are basic, important and fundamental objects in modern geometry and topology.}
\end{rem}

 Let us consider the following ``topological counting" $c_{top}$ on the category $\Cal {TOP}$ of topological spaces, which assigns to each topological space $X$ a certain integer (or more generally, an element in an integral domain) 
$$c_{top}(X) \in \bZ $$ 
such that it satisfies the following four properties, which are exactly the same as above except for (1):
\begin{enumerate}
\item
$ X\cong X'$ (homeomorphism = $\Cal{TOP}$- isomorphism) $\Longrightarrow$ $c_{top}(X) = c_{top}(X')$,

\item
$c_{top}(X) = c_{top}(X\setminus Y) + c_{top}(Y)$ for $Y \subset X$ (for the moment no condition), 

\item
$c_{top}(X \times Y) = c_{top}(X)\cdot \chi_{top}(Y)$,

\item
$c_{top}(pt) = 1.$ 
\end{enumerate} 

\begin{rem}
As in the above Remark(\ref{c-rem}) ,  (1) and (3) imply that $c_{top}(pt) = 0$ or $1$. If $c(pt) = 0$, then it follows from (1) and (3) that $c_{top}(X) = 0$ for any topological space $X$. Thus the last condition (4) $c(pt) = 1$ means that $c_{top}(X)$ is \underline {a nontrivial one}. Hence, the topological counting $c_{top}$ can be put in as \emph{a nontrivial, multiplicative, additive, topological invariant}.
\end{rem} 

%

\begin{pro} \label{c_top} If such a $c_{top}$ exists, then we must have that
$$c_{top}(\bR^1) =  -1, \quad \text {hence} \quad c_{top}({\bR}^n) = (-1)^n.$$
Hence if $X$ is a finite $CW$-complex with $\sigma_n(X)$ denoting the number of open $n$-cells, then
$$c_{top}(X) = \sum_n (-1)^n \sigma_n(X) = \chi(X)$$
is the Euler--Poincar\'e characteristic of $X$. 
\end{pro}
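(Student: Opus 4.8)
The plan is to first pin down the single value $c_{top}(\bR^1)$ and then bootstrap everything else from the three structural properties. The key observation is that $\bR^1$ admits a set-theoretic partition into three pieces, each homeomorphic either to $\bR^1$ or to a point. Writing $\bR^1 = (-\infty,0) \sqcup \{0\} \sqcup (0,\infty)$ and applying the scissor relation (2) twice — which, as stressed in the text, puts no condition on the subset removed — together with homeomorphism invariance (1) and normalization (4), I would obtain
$$c_{top}(\bR^1) = c_{top}\big((-\infty,0)\big) + c_{top}(\{0\}) + c_{top}\big((0,\infty)\big) = c_{top}(\bR^1) + 1 + c_{top}(\bR^1).$$
The resulting equation $c_{top}(\bR^1) = 2\,c_{top}(\bR^1) + 1$ forces $c_{top}(\bR^1) = -1$. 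Then, since $\bR^n \cong \bR^1 \times \cdots \times \bR^1$, multiplicativity (3) immediately gives $c_{top}(\bR^n) = \big(c_{top}(\bR^1)\big)^n = (-1)^n$.

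For the $CW$-complex statement I would argue by induction on the number of cells, using the skeletal filtration. If $X$ is a finite $CW$-complex with open cells $e_1, \dots, e_k$, these form a set-theoretic partition of $X$, so iterating the scissor relation (again legitimate because (2) is unconditional) yields $c_{top}(X) = \sum_{i=1}^k c_{top}(e_i)$. Each open $n$-cell is homeomorphic to the open $n$-disk, hence to $\bR^n$, so by (1) and the first part $c_{top}(e_i) = (-1)^{\dim e_i}$. Grouping the cells by dimension then produces $c_{top}(X) = \sum_n (-1)^n \sigma_n(X)$, and this alternating sum is the classical combinatorial Euler--Poincar\'e characteristic $\chi(X)$.

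There is no genuinely hard step here; the real content is recognizing that the scissor relation forces the ``wrong-looking'' value $-1$ on the contractible space $\bR^1$, so the main ``obstacle'' is conceptual rather than technical. The only points requiring a little care are: (a) that the scissor relation may be iterated over a partition into finitely many pieces, which is immediate since property (2) restricts neither the topology nor the position of the removed subset; (b) that an open $n$-cell is genuinely homeomorphic to $\bR^n$; and (c) that the resulting alternating cell-count is independent of the chosen $CW$ structure, so that naming it $\chi(X)$ is justified — but this last fact is standard, and in any case the proposition can be read simply as the computation of $c_{top}$ on finite $CW$-complexes.
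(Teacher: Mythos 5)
Your argument is correct and is essentially the paper's own proof: the same three-piece decomposition $\bR^1 = (-\infty,0)\sqcup\{0\}\sqcup(0,\infty)$ combined with (1), (2), (4) forces $c_{top}(\bR^1)=-1$, multiplicativity gives $(-1)^n$ for $\bR^n$, and the cell-by-cell additivity you spell out for finite $CW$-complexes is exactly the (implicit) reasoning behind the paper's ``hence.''
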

The equality $c_{top}(\bR^1) =  -1$ can be seen as follows: Consider 
$$\bR^1 = (-\infty, 0) \sqcup \{0 \} \sqcup (0, \infty).$$
Which implies that
$$c_{top}(\bR^1) = c_{top}((-\infty, 0)) + c_{top}(\{0 \}) + c_{top}((0, \infty)).$$
Hence we have
$$-c_{top}(\{0 \}) = c_{top}((-\infty, 0)) + c_{top}((0, \infty)) - c_{top}(\bR^1).$$
Since $\bR^1 \cong (-\infty, 0) \cong (0, \infty)$, it follows from (1) and (4) that
$$ c_{top}(\bR^1) = -c_{top}(\{0 \}) = -1.$$

To show the existence of such a counting $c_{top}$, we use or need  Ordinary Homology/ Cohomology Theory:(symbolically speaking or as a slogan) \\

\hspace {2cm}  {\bf topological counting $c_{top}$ : Ordinary (Co)homology Theory} \\

To be more precise, we use the Borel--Moore homology theory \cite{Borel-Moore}, which is defined to be the homology theory with closed supports. For a locally compact Hausdorff space $X$, the Borel--Moore homology theory $H_*^{BM}(X; R)$ with a ring coefficient $R$ is isomorphic to the relative homology theory of the pair $(X^c, *)$ with $X^c$ the one-point compactification of $X$ and $*$ the one point added to $X$:
$$H_*^{BM}(X; R) \cong H_*(X^c, *  ; R).$$
Hence, if $X$ is compact, the Borel--Moore homology theory is the usual homology theory: $H_*^{BM}(X; R) = H_*(X; R)$. 

Let $\frak K$ be a field (e.g., $\bR$ or $\bC$). If the Borel--Moore homology theory $H_*^{BM}(X; \frak K)$ is finite dimensional, e.g., if $X$ is finite $CW$-complex, then the Euler--Poincar\'e characteristic $\chi_{BM}$ using the Borel--Moore homology theory with a field coefficient $\frak K$ (e.g., $\bR$ or $\bC$)
$$\chi_{BM}(X) := \sum_n (-1)^n \op {dim}_{\frak K} H_{n}^{BM}(X; \frak K)$$
gives rise to the above topological counting $\chi_{top}$, because it satisfies that $H_n^{BM}(\bR^n, \frak K) = \frak K$ and $H_k^{BM}(\bR^n, \frak K) = 0$ for $k \not = n$, and thus
$$\chi_{BM}(\bR^n) = (-1)^n.$$
It turns out that for coefficients in the field $\frak K$, the Borel-Moore homology is \emph {dual \footnote { For an $n$-dimensional manifold $M$  the Poincar\'e duality map $\Cal {PD}: H_c^k(M) \cong H_{n-k}(M)$ is an isomorphism and also $\Cal {PD}: H^k(M) \cong H_{n-k}^{BM} (M)$ is an isomorphism. Thus they are \emph {Poincar\'e dual}, but \underline {not} \emph{dual as vector spaces}.} as a vector space} to the cohomology with compact support, namely 

$$H^{BM}_p(X; \frak K) = Hom(H_c^p(X; \frak K), \frak K).$$
Since $\frak K$ is a field, we have
$$H^{BM}_p(X; \frak K) \cong H_c^p(X; \frak K)$$
Hence the Euler-Poincar\'e characteristic using the Borel-Moore homology
$\chi_{BM}(X)$
 is equal to the Euler-Poincar\'e characteristic using the cohomology with compact support, usually denoted by $\chi_c$,
$$\chi_c(X) =\sum_i (-1)^i dim_K H^i_c(X; \frak K).$$
Since it is quite common to use $\chi_c$, we have

\begin{cor} For the category of locally compact Hausdorff spaces, 
$$c_{top} = \chi_c$$
the Euler--Poincar\'e characteristic using the cohomology with compact support.
\end{cor}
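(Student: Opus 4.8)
The plan is to obtain $c_{top}=\chi_c$ by splicing together two identifications that the preceding discussion has already prepared. First, one verifies that the Borel--Moore Euler characteristic
$$\chi_{BM}(X)=\sum_n(-1)^n\dim_{\frak K}H^{BM}_n(X;\frak K)$$
satisfies the four defining properties of a topological counting on the category of (suitably finite) locally compact Hausdorff spaces, so that there one may take $c_{top}=\chi_{BM}$; this is consistent with Proposition \ref{c_top}, since $\chi_{BM}(\bR^n)=(-1)^n$ as already noted. Second, one invokes the fact recorded above that over a field the Borel--Moore homology is the vector-space dual of the compactly supported cohomology, $H^{BM}_p(X;\frak K)\cong H^p_c(X;\frak K)$, which forces $\chi_{BM}(X)=\chi_c(X)$ degree by degree. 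Composing, $c_{top}=\chi_{BM}=\chi_c$.

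So the work is concentrated in the first step, the axiom check for $\chi_{BM}$ (equivalently $\chi_c$). Property (1), invariance under homeomorphism, is immediate from the functoriality of $H^{BM}_*$; property (4) holds because $H^{BM}_*(pt;\frak K)=\frak K$ sits in degree $0$, so $\chi_{BM}(pt)=1$; property (3) follows from the K\"unneth isomorphism $H^{BM}_*(X\times Y;\frak K)\cong H^{BM}_*(X;\frak K)\otimes_{\frak K}H^{BM}_*(Y;\frak K)$ together with the multiplicativity of Euler characteristics of graded vector spaces under tensor product. Each of these is routine once one has the standard machinery of Borel--Moore homology (or, dually, of $H^*_c$) in hand.

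The one property that I expect to be the genuine obstacle is additivity (2). For $Y\subset X$ one wants $\chi_{BM}(X)=\chi_{BM}(X\setminus Y)+\chi_{BM}(Y)$, and the tool is the long exact sequence
$$\cdots\to H^{BM}_n(X\setminus Y;\frak K)\to H^{BM}_n(X;\frak K)\to H^{BM}_n(Y;\frak K)\to H^{BM}_{n-1}(X\setminus Y;\frak K)\to\cdots$$
valid when $Y$ is closed in $X$ (equivalently, the sequence $\cdots\to H^n_c(X\setminus Y)\to H^n_c(X)\to H^n_c(Y)\to\cdots$ for compactly supported cohomology), from which taking the alternating sum of dimensions gives the desired relation. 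The care required here is twofold: one must restrict to a subcategory on which all groups in play are finite-dimensional and vanish in high degrees, so that $\chi_{BM}$ and $\chi_c$ are even defined and the alternating-sum manipulation is legitimate; and one must fix exactly which subsets $Y$ are admitted (the exact sequence above is the honest one for $Y$ closed, and a general admissible $Y$ is handled by reducing to the locally closed case via a further decomposition). With these points settled, the corollary is precisely the two-line composition $c_{top}=\chi_{BM}=\chi_c$ above.
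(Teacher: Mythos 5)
Your proposal is correct and follows essentially the same route as the paper: realize the topological counting by the Borel--Moore Euler characteristic $\chi_{BM}$ (which the paper, unlike you, merely asserts satisfies the axioms, the additivity coming from the long exact sequence of a closed pair and the product property from K\"unneth, all on the subcategory with finite-dimensional Borel--Moore homology), and then identify $\chi_{BM}$ with $\chi_c$ by the field-coefficient duality $H^{BM}_p(X;\frak K)\cong H^p_c(X;\frak K)$. The only point to keep in mind is that the equality $c_{top}=\chi_{BM}$ for an arbitrary counting rests on the uniqueness coming from Proposition \ref{c_top} (decomposition into cells), which you correctly invoke as consistency rather than reprove.
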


\begin{rem} The above story could be simply said as follows: There could exist infinitely many ways of ``topological counting" on the category $\Cal {TOP}$ of topological spaces, but they are \emph { \underline {all} identical to the Euler--Poincar\'e characteristic with compact support} when restricted to the subcategory of locally compact Hausdorff spaces with finite dimensional Borel-Moore homologies. Symbolically speaking, we can simply say that 
$$`` c_{top} = \chi_c ".$$
\end{rem}

Next let us consider the following ``algebraic counting" $c_{alg}$ on the category $\Cal {VAR}$ of \underline {complex} algebraic varieties (of finite type over $\bC$), which assigns to each complex algebraic variety $X$ a certain element 
$$c_{alg}(X) \in R $$ 
 in a commutative ring $R$ with unit $1$,  such that \
 
\begin{enumerate}
\item
$ X\cong X'$ ($\Cal {VAR}$-isomorphism) $\Longrightarrow$ $c_{alg}(X) = c_{alg}(X')$,

\item
$c_{alg}(X) = c_{alg}(X\setminus Y) + c_{alg}(Y)$ for a closed subvariety $Y \subset X$

\item
$c_{alg}(X \times Y) = c_{alg}(X)\cdot \chi_{alg}(Y)$,

\item
$c_{alg}(pt) = 1.$ \\
\end{enumerate} 

Just like $c(X)$ and $c_{top}(X)$, the last condition simply means that $c_{alg}$ is a nontrivial one.\\

The real numbers $\bR$ and in general the Euclidean space $\bR^n$ are the most fundamental objects in the category $\Cal {TOP}$ of topological spaces, the complex numbers $\bC$ and in general complex affine spaces $\bC^n$ are the most fundamental objects in the category $\Cal {VAR}$ of complex algebraic varieties.  The decomposition of the $n$-dimensional complex projective space
$$\bP^n = \bC^0\sqcup \bC^1 \sqcup \cdots \sqcup \bC^{n-1} \sqcup \bC^n$$
implies the following 
\begin{pro}\label{alg-c} If such a $c_{alg}$ exists, then we must have that
$$c_{alg}(\bP^n) = 1 - y + y^2 - y^3 + \cdots + (-y)^n$$
where $y := - c_{alg} (\bC^1) \in R$.
\end{pro}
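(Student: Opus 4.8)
The plan is to exploit the affine cell decomposition $\bP^n = \bC^0\sqcup \bC^1 \sqcup \cdots \sqcup \bC^n$ together with the three structural axioms, proceeding by induction on $n$. The one subtlety is that the scissor relation (2) is asserted only for \emph{closed} subvarieties, whereas the cells $\bC^k$ sit inside $\bP^n$ merely as locally closed pieces; so one cannot apply (2) to all the cells at once. Instead I would peel off the top cell: inside $X=\bP^n$ the hyperplane at infinity $\bP^{n-1}$ is a closed subvariety whose open complement $X\setminus \bP^{n-1}$ is isomorphic to $\bC^n$, so axiom (2) gives
$$c_{alg}(\bP^n) = c_{alg}(\bC^n) + c_{alg}(\bP^{n-1}).$$

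Next I would evaluate $c_{alg}(\bC^k)$. By the multiplicativity axiom (3), $c_{alg}(\bC^k) = c_{alg}(\bC^1\times \cdots \times \bC^1) = c_{alg}(\bC^1)^k$, and since $y := -c_{alg}(\bC^1)$ by definition this equals $(-y)^k$; for $k=0$ axiom (4) gives $c_{alg}(\bC^0) = c_{alg}(pt) = 1 = (-y)^0$. Substituting into the recursion yields $c_{alg}(\bP^n) = (-y)^n + c_{alg}(\bP^{n-1})$. With the base case $c_{alg}(\bP^0) = c_{alg}(pt) = 1$ from axiom (4), a trivial induction on $n$ then unwinds the recursion to
$$c_{alg}(\bP^n) = \sum_{k=0}^{n} (-y)^k = 1 - y + y^2 - y^3 + \cdots + (-y)^n,$$
as claimed.

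The only real point requiring care — the ``main obstacle,'' such as it is — is precisely the locally-closed-versus-closed issue in applying (2), which is why the argument is organized as a one-cell-at-a-time induction rather than a single application of additivity to the full stratification; everything else is a mechanical consequence of invariance, multiplicativity, and the normalization $c_{alg}(pt)=1$. Implicitly one also invokes axiom (1), so that $c_{alg}(\bC^k)$ depends only on the isomorphism type of the cell and the decomposition of $\bP^n$ can be read off abstractly.
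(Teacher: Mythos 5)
Your proof is correct and is essentially the argument the paper intends: the paper simply states that the cell decomposition $\bP^n = \bC^0\sqcup\cdots\sqcup\bC^n$ ``implies'' the proposition, and your inductive peeling-off of the closed hyperplane at infinity (together with multiplicativity giving $c_{alg}(\bC^k)=(-y)^k$ and the normalization $c_{alg}(pt)=1$) is just the careful spelling-out of that, correctly handling the closed-versus-locally-closed point that the paper leaves implicit.
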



\begin{rem} Proposition \ref{alg-c} already indicates that there could exist as infinitely many ways as the integers $y$'s  of ``algebraic counting" $c_{alg}$ on the category $\Cal {VAR}$ of complex algebraic varieties. Which is strikingly different from the ``topological counting" $c_{top}$ and the original counting $c$; in these cases they are uniquely determined. This difference of course lies in the ``complex structure":
$$\text { a set + a topological structure + {\bf a complex structure}.}$$
Certainly one cannot consider $\bR^1$ and thus the previous argument for $c_{top}(\bR^1) = -1$ DOES NOT work. In this sense, we should have used the symbol $c_{alg/\bC }$ to emphasize the complex structure, instead of $c_{alg}$. Since we are dealing with only the category of complex algebraic varieties in this paper, we just denote $c_{alg}$. See Remark \ref{real} below for the category of real algebraic varieties.
\end{rem}

To show the existence of such a $c_{alg}$, in fact, to show much more ways of counting than \underline {as infinitely many ways as the integers $y$'s}, we need or use the \emph {Deligne's Theory of Mixed Hodge Structures} \cite{De1, De2}, which comes from the algebraic structure.
$$\text { a set + a topological structure + {\bf a complex structure + an algebraic structure}.}$$

Then the Hodge--Deligne polynomial \\

$$\chi _{u,v}(X) := \sum_{i, p, q \geq 0} (-1)^i (-1)^{p+q}\op{dim}_{\bC} (Gr^p_F Gr^W_{p+q} H^i_c(X, \bC)) u^p v^q$$
satisfies the above four properties with $R = \bZ[u,v]$ and $-y := c_{alg}(\bC^1) = uv$, namely any Hodge--Deligne polynomial $\chi _{u,v}$ with $uv = -y$ is such a $c_{alg}$.   Here we point out that by Deligne's work only the graded terms with $p, q \geq 0$ are non-trivial, otherwise one would get $\chi_{u,v}(X) \in \bZ[u, u^{-1}, v, v^{-1}]$.

Similarly one can consider the invariant $c_{alg}(X) := \chi_{y, -1} \in \bZ[y]$ with $c_{alg}(\bC^1) = -y$. \\

Here we should note that when $(u, v) = (-1, -1)$, then we have
$$\chi_{-1, -1}(X) = \chi_c(X) = c_{top}(X).$$
Furthermore we note that for a smooth compact variety $X$ we have that 
\begin{itemize}

\item
$\chi_{0, -1}(X)$ is the arithmetic genus, 

\item
$\chi_{1, -1}(X)$ is the signature. 

\end{itemize}
These three cases $(u, v) = (-1, -1)$, $(0, -1)$ and $(1,-1)$ are very important ones.

$$\text {\bf algebraic counting $c_{alg}$ :   Mixed Hodge Theory \hspace {3cm}}$$
$$\hspace {3cm}\text {\bf  = Ordinary (Co)homology Theory}$$
$$\hspace {4cm} \text {\bf  +  \quad Mixed Hodge Structures}.$$\

\begin{rem} (e.g., see \cite{DK}) The following description is also fine, but we do the above one for the later discussion on motivic characteristic classes:
$$c_{alg}(\bP^n) = 1 +y + y^2 + y^3 + \cdots + y^n$$
where $y = c_{alg} (\bC^1) \in \bZ[y]$. The Hodge--Deligne polynomial is usually denoted by $E(X; u, v)$ and defined to be

$$E(X; u, v) := \sum_{i, p, q \geq 0} (-1)^i \op {dim}_{\bC} (Gr^p_F Gr^W_{p+q} H^i_c(X, \bC)) u^p v^q.$$
Thus we have
$$\chi _{u,v}(X)  = E(X; -u, -v).$$
The reason why we make such a modification lies in the definition of the Hirzebruch's generalized Todd class and Hirzebruch's $\chi_y$ characteristic, which will come below.
\end{rem}

\begin{con} \label{con}The ``algebraic counting" $c_{alg}$ specializes to the topological counting  $c_{top}$. Are there more ``algebraic counting" $c_{alg}$ which specialize to the Hodge--Deligne polynomial $\chi_{u,v}$ (which is sensitive to an algebraic structure) ? The answer would be negative. In other words, there would be no extra structures other than Deligne's mixed Hodge structure that contribute more to the algebraic counting $c_{alg}$ of complex algebraic varieties.\\
\end{con}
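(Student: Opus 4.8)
Since the statement is a heuristic conjecture rather than a theorem, what follows is a plan of attack rather than a proof. The first step is to repackage all possible algebraic countings through a single universal object. Properties (1) and (2) say that any $c_{alg}\colon \Cal{VAR} \to R$ descends to a homomorphism of abelian groups out of the Grothendieck group $K_0(\Cal{VAR})$ of complex algebraic varieties, and (3), (4) upgrade this to a ring homomorphism $\pi_c\colon K_0(\Cal{VAR}) \to R$ with $\pi_c(\bC^1) = -y$. The Hodge--Deligne polynomial is itself such a ring homomorphism $\chi_{u,v}\colon K_0(\Cal{VAR}) \to \bZ[u,v]$, and ``$c_{alg}$ specializes to $\chi_{u,v}$'' means precisely that there is a ring homomorphism $s\colon R \to \bZ[u,v]$ with $s\circ\pi_c = \chi_{u,v}$. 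In these terms the conjecture becomes a universality statement: \emph{every $\pi_c$ lying over $\chi_{u,v}$ factors through $\chi_{u,v}$ itself}, so that no ``counting'' strictly finer than the Hodge--Deligne polynomial exists. Thus the whole question is really one about the ring $K_0(\Cal{VAR})$ and the homomorphism $\chi_{u,v}$.

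Second, I would reduce from all varieties to smooth projective ones using Bittner's presentation: $K_0(\Cal{VAR})$ is generated by classes $[X]$ of smooth projective $X$ subject only to $[\emptyset] = 0$ and the blow--up relations $[\mathrm{Bl}_Y X] - [E_Y] = [X] - [Y]$ for $Y \subset X$ a smooth closed subvariety with exceptional divisor $E_Y$. A counting is then determined by its values on smooth projective varieties, on which $\chi_{u,v}$ is just the Hodge polynomial $\sum_{p,q}(-1)^{p+q}h^{p,q}(X)\,u^p v^q$. The remaining task would be to show that the kernel of $\chi_{u,v}$ on this ``blow--up ring'' is invisible to any further multiplicative invariant --- concretely, that $\chi_{u,v}$ becomes injective after the only operations one is really allowed (restricting to the image, inverting a few elements, or tensoring with $\bQ$) --- for then $s$ restricted to $\mathrm{im}\,\pi_c$ would be forced to be an isomorphism and $c_{alg}$ would coincide with $\chi_{u,v}$.

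The hard part --- and the reason I expect the statement as literally phrased to need extra hypotheses --- is exactly this control of $\ker \chi_{u,v} \subset K_0(\Cal{VAR})$. The ring $K_0(\Cal{VAR})$ is notoriously subtle: it is not an integral domain, the Lefschetz class $\bL = [\bC^1]$ is a zero divisor, and by Larsen--Lunts $K_0(\Cal{VAR})/(\bL)$ is the monoid ring on stable birational equivalence classes of smooth projective varieties. But the stable birational class is itself additive under blow--ups and multiplicative for products, i.e.\ it \emph{is} an algebraic counting $c_{alg}$ (with $y = 0$) which is strictly finer than the corresponding specialization of $\chi_{u,v}$, since non-isomorphic stably birational varieties can share the same Hodge polynomial. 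Hence a provable version of the conjecture must restrict the target ring $R$ --- say to finitely generated polynomial rings, or to invariants arising from a realization in Saito's mixed Hodge modules --- and the statement one can actually hope to establish is a \emph{relative} universality: among countings that factor through an MHM-realization, $\chi_{u,v}$ is terminal. I would attack that relative form first, via Saito's formalism together with Bittner's presentation, and only afterwards probe how far the hypothesis on $R$ can be loosened; removing it altogether seems to demand a genuinely new understanding of $K_0(\Cal{VAR})$, which I would single out as the central obstacle.
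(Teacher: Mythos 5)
There is nothing in the paper to compare your argument against: the statement you were given is Conjecture \ref{con}, which the author merely poses as a heuristic speculation (``the answer would be negative'') and never formalizes, let alone proves, anywhere in the paper. So the honest assessment is of your plan on its own terms, and there you should be aware of two points. First, your formalization --- ``every ring homomorphism $\pi_c\colon K_0(\Cal{VAR})\to R$ lying over $\chi_{u,v}$ factors through $\chi_{u,v}$'' --- is already falsified by the tautological counting $X\mapsto [X]\in K_0(\Cal{VAR})$ itself: it satisfies (1)--(4), it specializes to $\chi_{u,v}$ (compose with $\chi_{u,v}$), and it is strictly finer, since $\chi_{u,v}$ is far from injective on $K_0(\Cal{VAR})$. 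So the conjecture cannot mean literal terminality of $\chi_{u,v}$ among countings specializing to it; it is an informal claim that no \emph{geometric or analytic structure} beyond Deligne's mixed Hodge structure produces genuinely new countings, and that intent is not captured by the factorization property alone. Any serious attack has to begin by proposing a precise class of ``structures'' (your suggestion of countings factoring through an MHM realization is a reasonable candidate), and that choice, not the Bittner reduction, is where the real content lies.

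Second, your Larsen--Lunts example is not quite a counterexample to the conjecture \emph{as phrased}: the quotient map $K_0(\Cal{VAR})\to K_0(\Cal{VAR})/(\bL)\cong\bZ[\mathrm{SB}]$ kills $\bL=[\bC^1]$, whereas $\chi_{u,v}(\bL)=uv\neq 0$, so this counting does not specialize to the Hodge--Deligne polynomial and falls outside the hypothesis of the question. It does, as you say, show that $K_0(\Cal{VAR})$ carries information invisible to Hodge theory, which supports your conclusion that only a relative or restricted form of the conjecture can be provable; but the cleaner illustration of that point is the kernel of $\chi_{u,v}$ itself. With these corrections your plan (Bittner presentation, then universality relative to a fixed realization functor) is a sensible way to turn the author's speculation into a mathematical problem, but it remains a program, exactly as the paper leaves it.
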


\begin{rem}\label{real}
 In the category $\Cal{VAR}(\bR)$ of \emph{\underline {real} algebraic varieties}, we can consider $c_{alg/\bR }(\bR^1)$ of the real line $\bR^1$, therefore we might be tempted to make a hasty conclusion that in the category of real algebraic varieties the topological counting $c_{top}$, i.e., $\chi_c$,  is sufficient. Unfortunately, the argument for $c_{top}(\bR^1) = -1$ DOES NOT work in the category $\Cal{VAR}(\bR)$, simply because $\bR^1$ and $(-\infty, 0)$ or $(0, \infty)$ are not isomorphic as real algebraic varieties. Even as compact varieties there DO exist real algebraic varieties which are homeomorphic but not isomorphic as real algebraic varieties; the following are such examples (see \cite[Example 2.7]{MP}):
 
The usual \emph{normal crossing} ``figure eight" curve:
$$F8 = \{(x, y) | y^2 = x^2 - x^4 \}.$$
The proper transform of $F8$ under the blowup of the plane at the origin is homeomorphic to a circle, and the preimage of the singular point of $F8$ is two points.

The \emph {tangential} ``figure eight" curve: 
$$tF8= \left \{(x, y) | \{(x+1)^2 + y^2 -1\}\{(x-1)^2+y^2-1\} = 0 \right \},$$
which is the union of two circles tangent at the origin. Therefore, in contrast to the category of crude topological spaces, in the category of \underline {real algebraic} varieties
an ``algebraic counting" $c_{alg}$ is meaningful, i.e., sensitive to an algebraic structure. Indeed, as such a ``real algebraic counting" $c_{\bR alg}$ there are
$$\emph {the $i$-th virtual Betti number} \quad  \beta_i(X) \in \bZ$$
and 
$$\emph {the virtual Poincar\'e polynomial} \quad \beta_t(X) = \sum_i\beta_i(X)t^i \in \bZ[t].$$They are both identical to the usual Betti number and Poincar\'e polynomial on compact nonsingular varieties. For the above two figure eight curves $F8$ and $tF8$ we indeed have that
$$\beta_t(F8) \not = \beta_t(tF8).$$
For more details, see \cite{MP} and \cite{To2}, and see also Remark \ref{beta-remark}. 
\end{rem}

Finally, in passing, we also mention the following ``cobordism" counting $c_{cob}$ on the category of closed oriented differential manifolds or the category of stably almost complex manifolds :
\begin{enumerate}
\item
$ X\cong X'$ (cobordant, or bordant) $\Longrightarrow$ $c_{cob}(X) = c_{cob}(X')$,

\item
$c_{cob}(X \sqcup Y) = c_{cob}(X) + c_{cob}(Y)$ \quad 
(Note: in this case $c_{cob}(X \setminus Y)$ does not make  a sense, because $X \setminus Y$ has to be a closed oriented manifold)

\item
$c_{cob}(X \times Y) = c_{cob}(X)\cdot c_{cob}(Y)$,

\item
$c_{cob}(pt) = 1.$ \\
\end{enumerate} 

As in the cases of the previous countings,  (1) and (3) imply that $c_{cob}(pt) = 0$ or $1$. It follows from (3) that $c_{cob}(pt) = 0$ implies that $c_{cob}(X) = 0$ for any closed oriented differential manifolds $X$. Thus the last condition $c_{cob}(pt) = 1$ means that our $c_{cob}$ is a nontrivial one. Such a ``cobordism" counting $c_{cob}$ is nothing but a \underline {genus} such as \underline {signature, $\hat A$-genus, elliptic genus}. As in Hirzebruch's book, a genus is usually defined as a nontrivial one satisfying the above three properties (1), (2) and (3). Thus, it is the same as the one given above.\\

Here is a very simple problem on genera of closed oriented differentiable manifolds or stably almost complex manifolds:

\begin{prob} \label {all genera} Determine all genera.
\end{prob}

Let $\op {Iso}(G)_{n}$ be the set of isomorphism classes of smooth closed
(and oriented) pure $n$-dimensional manifolds $M$ for $G=O$ (or $G=SO$), 
or of pure $n$-dimensional weakly (``$=$ stably") almost complex manifolds $M$ for $G=U$,
i.e. $TM\oplus \bR_{M}^N$ is a complex vector bundle (for suitable $N$, with $\bR_{M}$ the trivial real line bundle over $M$). Then
$$\op {Iso}(G):= \bigoplus_{n}\: \op {Iso}(G)_{n}$$
becomes a commutative graded semiring with addition and multiplication given by disjoint union
and exterior product, with $0$ and $1$ given by the classes of the empty set and one point
space. 

Let $\Omega^{G}:=\op {Iso}(G)/\sim$ be the corresponding {\em cobordism ring}
of closed ($G=O$) and oriented ($G=SO$) or weakly (``$=$ stably") almost complex manifolds ($G=U$)
as dicussed for example in \cite{Stong}.
Here $M\sim 0$ for a closed pure $n$-dimensional $G$-manifold $M$ if and only if 
there is a compact pure $n+1$-dimensional $G$-manifold $B$ with boundary $\partial B\simeq M$.
Note that this is indeed a ring with $-[M]=[M]$ for $G=O$ or $-[M]=[-M]$ for $G=SO,U$,
where $-M$ has the opposite orientation of $M$. Moreover, for $B$ as above with
$\partial B\simeq M$ one has 
$$TB|\partial B \simeq TM\oplus \bR_{M}.$$
This also explains the use of the stable tangent bundle for the definition of a
stably or weakly almost complex manifold.

The following structure theorems are fundamental:(see \cite[Theorems on p.177 and p.110]{Stong}):

\begin{thm}\label{cob-structure} 
\begin{enumerate}
\item {\em (Thom)} $\Omega^{SO}\otimes \bQ = \bQ[\bP^2, \bP^4, \bP^6, \cdots, \bP^{2n} \dots ]$
is a polynomial algebra in the classes of the complex even dimensional projective spaces.
\item {\em (Milnor)} $\Omega^{U}_{*}\otimes \bQ = \bQ[\bP^1, \bP^2, \bP^3, \cdots, \bP^{n} \dots ]$
is a polynomial algebra in the classes of the complex  projective spaces.
\end{enumerate}
\end{thm}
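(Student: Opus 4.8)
The plan is to run the classical argument of Thom and Milnor through Thom spectra and characteristic numbers; I would describe it for $G=U$, the case $G=SO$ being completely parallel. First I would replace the geometric cobordism ring by stable homotopy: the Pontryagin--Thom construction furnishes a graded ring isomorphism $\Omega^{U}_{*}\cong \pi_{*}(MU)$, where $MU$ is the Thom spectrum of the universal stably complex bundle over $BU$, and likewise $\Omega^{SO}_{*}\cong \pi_{*}(MSO)$. The multiplication on $\pi_{*}(MU)$ comes from the ring-spectrum structure $MU\wedge MU\to MU$ induced by the Whitney-sum maps $BU(n)\times BU(m)\to BU(n+m)$, and on classes of manifolds this is exactly the exterior product, so nothing is lost in passing to homotopy.

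Next I would rationalize. Since $MU$ is a connective spectrum, the rational Hurewicz theorem gives a ring isomorphism $\pi_{*}(MU)\otimes\bQ\cong H_{*}(MU;\bQ)$, and the Thom isomorphism identifies the latter with $H_{*}(BU;\bQ)$, compatibly with products (the product on $H_{*}(BU;\bQ)$ being the Pontryagin product coming from Whitney sum). Now $H_{*}(BU;\bQ)$ is, under the Pontryagin product, a polynomial $\bQ$-algebra on one generator in each even degree $2,4,6,\dots$ (dually to $H^{*}(BU;\bQ)=\bQ[c_{1},c_{2},\dots]$); hence $\Omega^{U}_{*}\otimes\bQ$ is \emph{abstractly} such a polynomial algebra, and in particular $\dim_{\bQ}\bigl(\Omega^{U}_{2n}\otimes\bQ\bigr)$ equals the number of partitions of $n$. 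For $G=SO$ one uses instead $H^{*}(BSO;\bQ)=\bQ[p_{1},p_{2},\dots]$ with $\deg p_{i}=4i$, so that $\Omega^{SO}_{*}\otimes\bQ$ is abstractly polynomial on one generator in each degree $4,8,12,\dots$ and is concentrated in degrees divisible by $4$.

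It then remains to show that the specific classes $[\bP^{n}]$ (resp.\ $[\bP^{2k}]$ in the $SO$ case) can be chosen as these polynomial generators. Here I would use the characteristic number $s_{n}$ built from the $n$-th power sum of the Chern roots: the point is that $s_{n}$ annihilates decomposables, since $s_{n}(ab)=s_{n}(a)\varepsilon(b)+\varepsilon(a)s_{n}(b)$ with $\varepsilon$ the augmentation onto degree zero, so $s_{n}$ kills any product of two positive-degree classes. Thus a class $x\in\Omega^{U}_{2n}\otimes\bQ$ with $s_{n}(x)\neq0$ is indecomposable and may serve as a degree-$2n$ polynomial generator. From $c(T\bP^{n})=(1+h)^{n+1}$ the relevant Chern roots are $n+1$ copies of the hyperplane class $h$, so $s_{n}$ evaluates to $(n+1)h^{n}$ on the fundamental class, giving $s_{n}[\bP^{n}]=n+1\neq0$. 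Comparing with the degree count of the previous step (exactly one new generator needed in each even degree), the subalgebra generated by $[\bP^{1}],[\bP^{2}],\dots$ fills out all of $\Omega^{U}_{*}\otimes\bQ$, and these classes are automatically algebraically independent (a lowest-degree relation among them would exhibit some $[\bP^{n}]$ as a decomposable, contradicting $s_{n}[\bP^{n}]\neq0$). The $SO$ case is the same with Pontryagin numbers: the analogous $s$-number of $\bP^{2k}$ is $2k+1\neq0$, while $[\bP^{2k+1}]$ is rationally zero because $\Omega^{SO}_{*}\otimes\bQ$ lives in degrees $\equiv0\pmod4$.

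The step I expect to be the real work is the second one: proving that $\pi_{*}(MU)\otimes\bQ$ is isomorphic to $H_{*}(BU;\bQ)$ \emph{as a ring}, i.e.\ that rational cobordism is detected faithfully by Chern (resp.\ Pontryagin) numbers and has precisely the size of the corresponding polynomial algebra. Once that is in hand, identifying the projective-space classes as generators is a one-line Chern-class computation on $\bP^{n}$ together with the elementary observation that indecomposables serve as polynomial generators. (This is, of course, exactly the content of the theorems quoted from \cite{Stong}.)
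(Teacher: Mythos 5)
Your argument is correct, and it is exactly the classical Thom--Milnor proof (Pontryagin--Thom, rational Hurewicz plus Thom isomorphism to get the abstract polynomial structure on $H_*(BU;\bQ)$ resp.\ $H_*(BSO;\bQ)$, then the $s$-number computation $s_n[\bP^n]=n+1$, resp.\ $2k+1$ for $\bP^{2k}$, to exhibit the projective spaces as generators). The paper itself offers no proof and simply cites Stong's book, where this is precisely the argument given, so there is nothing to compare beyond noting that your sketch fills in what the citation points to.
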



So, if we consider a commutative ring $R$ without torsion for a genus $\ga: \Omega^{SO} \to R$, then the genus $\ga$ is completely determined by the value $\ga(\bP^{2n})$ of the cobordism class of each even dimensional complex projective sapce $\bP^{2n}$. Then using this value one could consider its generating ``function" or formal power series such as $\sum_n \ga(\bP^{2n})x^n$, or $\sum_n \ga(\bP^{2n})x^{2n}$, and etc. In fact, a more interesting problem is to determin all \emph {rigid} genera such as the above mentioned signature $\sigma$ and $\hat A$; namely a genera satisfying the following multiplicativity property stronger than the product property (3):\\
\begin{enumerate}
\item[(3)$_{rigid}$]: $\ga(M) = \ga(F) \ga(B)$ for a fiber bundle $M \to B$ with its fiber $F$ and compact connected structure group.\\
\end{enumerate}

\begin{thm} Let $\log_{\ga}(x)$ be the following ``logarithmic" formal power series in $R[[x]]$:
$$\log_{\ga}(x) := \sum _n \frac {1}{2n+1}\ga(\bP^{2n})x^{2n+1}.$$
Then the genus $\ga$ is rigid if and only if it is an elliptic genus, i.e., its logarithm $\log_{\ga}$ is an elliptic integral, i.e.,
$$\log_{\ga}(x) = \int_0^x \frac {1}{\sqrt {1 -2 \delta t^2 + \epsilon t^4 }} dt$$
for some $\delta, \epsilon \in R$.
\end{thm}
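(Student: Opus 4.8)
The plan is to reduce the asserted equivalence to a statement about the Hirzebruch characteristic power series of $\ga$, to prove ``rigid $\Rightarrow$ elliptic'' by an elementary fibre-integration argument, and to prove ``elliptic $\Rightarrow$ rigid'' by $S^1$-equivariant index theory. First I would set up the reduction. Since $R$ is torsion-free, by Theorem~\ref{cob-structure}(1) a genus $\ga\colon\Om^{SO}\otimes\bQ\to R$ is the same datum as an even power series $Q(x)=x/f(x)\in R[[x]]$ with $f$ odd and $f'(0)=1$ --- its characteristic series --- through which $\ga(M)=\langle\prod_iQ(x_i),[M]\rangle$ over the Pontryagin roots $\pm x_i$ of $TM$. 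From $T\bP^{2n}\oplus\underline{\bC}\cong\mathcal{O}(1)^{\oplus(2n+1)}$ one has Hirzebruch's identity $\ga(\bP^{2n})=[x^{2n}]\,Q(x)^{2n+1}$, whence by Lagrange inversion $[x^{2n+1}]f^{-1}(x)=\tfrac1{2n+1}\ga(\bP^{2n})$; thus $\log_{\ga}$ is exactly the compositional inverse $g:=f^{-1}$. Consequently the condition ``$\log_{\ga}(x)=\int_0^x(1-2\delta t^2+\epsilon t^4)^{-1/2}\,dt$'' is equivalent to $g'(y)^2=(1-2\delta y^2+\epsilon y^4)^{-1}$, i.e.\ to the differential equation
$$f'(x)^2=1-2\delta\,f(x)^2+\epsilon\,f(x)^4,$$
equivalently $f''=-2\delta f+2\epsilon f^3$, whose solutions are the Jacobi elliptic functions --- whence the name. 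So it remains to show that $\ga$ is rigid iff its $f$ satisfies this ODE.

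For ``rigid $\Rightarrow$ elliptic'', I would first feed into $(3)_{rigid}$ the bundles that witness the elliptic relations. For a complex vector bundle $\xi$ of rank $2k+1$ over a closed oriented $B$, the projectivization $\bP(\xi\oplus\underline{\bC})\to B$ is a fibre bundle with fibre the complex projective space $\bP^{2k+1}$ and compact connected structure group $PU(2k+2)$; since $\bP^{2k+1}$ has real dimension $4k+2$ and $\Om^{SO}_{4k+2}\otimes\bQ=0$ by Theorem~\ref{cob-structure}(1), we get $\ga(\bP^{2k+1})=0$, so $(3)_{rigid}$ forces $\ga\bigl(\bP(\xi\oplus\underline{\bC})\bigr)=0$ for all such $\xi$ and $B$. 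Hence $\ga$ annihilates the ideal $\mathcal{I}\subset\Om^{SO}\otimes\bQ$ generated by these total spaces. Next I would compute $\ga\bigl(\bP(\xi\oplus\underline{\bC})\bigr)$ by the projective-bundle Gysin pushforward (a residue in the relative hyperplane class), turning the vanishing into a universal identity on the coefficients of $f$; a standard detection over products of even-dimensional projective spaces with $\xi$ a sum of line bundles then forces $f'(x)^2-1$ to be divisible by $f(x)^2$ with quotient of degree $\le1$ in $f(x)^2$ --- precisely the ODE above. This is Ochanine's computation that $\Om^{SO}\otimes\bQ/\mathcal{I}\cong\bQ[\delta,\epsilon]$ ($|\delta|=4$, $|\epsilon|=8$), the tautological quotient genus having logarithm the elliptic integral; any rigid $\ga$ factors through this quotient and is therefore elliptic.

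For the converse ``elliptic $\Rightarrow$ rigid'' I would follow Bott--Taubes, reducing $(3)_{rigid}$ --- by the standard device of restricting the structure group to a circle acting fibrewise and matching fixed-point contributions over $B$ --- to the $S^1$-equivariant rigidity of the elliptic genus: for every closed $S^1$-manifold $M$, the $S^1$-equivariant refinement of the elliptic genus, namely the equivariant index of the twisted signature/Dirac operator obtained from $TM$ by the $\La_t$- and $S_t$-operations of Witten's description, is independent of the rotation parameter. To prove this, expand that equivariant index by the Atiyah--Bott holomorphic Lefschetz fixed point formula over $M^{S^1}$, obtaining a function $F(\tau,z)$ (with $q=e^{2\pi i\tau}$ and $z$ the equivariant variable) that is a priori meromorphic in $z$ and doubly periodic for the lattice $\bZ+\bZ\tau$. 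The theta-function transformation laws of the factors of the operator show that $F$ transforms with trivial automorphy factor, while a residue analysis of the fixed-point sum shows its apparent poles cancel; hence $F$ is a holomorphic elliptic function of $z$, so constant by Liouville and equal to $F(\tau,0)$, the ordinary elliptic genus.

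The whole weight of the argument sits in this last direction: passing from ``meromorphic and doubly periodic'' to ``constant'' requires at once the precise Jacobi/modular transformation behaviour of the infinite tensor products in the operator and a delicate cancellation of the poles produced by the fixed-point formula --- this is exactly the Bott--Taubes ``transfer'' argument (and, in other guises, the modular-forms arguments of Hirzebruch and of Liu), and it is substantially deeper than the formal manipulations of the first direction. A secondary technical point is keeping the $SO$- versus $U$-normalizations straight and making the reduction ``$S^1$-equivariant rigidity $\Rightarrow(3)_{rigid}$'' precise for an arbitrary compact connected structure group.
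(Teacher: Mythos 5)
The paper offers no proof of this theorem: it only attributes the ``only if'' direction to Ochanine \cite{Ochanine} and the ``if'' direction to Witten, Taubes and Bott--Taubes \cite{Witten, Taubes, Bott-Taubes}. Your sketch is an accurate outline of exactly those cited arguments --- the Lagrange-inversion dictionary identifying $\log_{\gamma}$ with $f^{-1}$ and the elliptic-integral condition with the functional equation $f'(x)^2=1-2\delta f(x)^2+\epsilon f(x)^4$, Ochanine's projective-bundle ideal for ``rigid $\Rightarrow$ elliptic'', and the equivariant-index/theta-function/Liouville argument for the converse --- with the genuinely deep steps (Ochanine's computation of the quotient and the Bott--Taubes pole cancellation) correctly located, though, unavoidably in a sketch, not reproved.

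One point you should make explicit, because as written your second direction claims more than is true: the Witten--Taubes--Bott--Taubes rigidity theorem requires the fibre to be a \emph{spin} manifold (the equivariant operators in ``Witten's description'' are twisted Dirac operators, and the $S^1$-rigidity you invoke is proved for spin $S^1$-manifolds), so ``elliptic $\Rightarrow$ rigid'' holds for $(3)_{rigid}$ only when the fibre $F$ is spin. With an arbitrary oriented fibre it is false: for the $\bP^2$-bundle $X=\bP(E\oplus\bC)\to S^4$, where $E$ is the rank-two complex bundle with $c_2(E)=k\neq 0$ (structure group $SU(2)$, compact and connected), one computes $\int_X p_1^2=-21k$ and $\int_X p_2=-3k$, hence $\hat A(X)=-3k/128\neq 0=\hat A(\bP^2)\,\hat A(S^4)$ (and $\hat A$ is the $\epsilon=0$ member of the family in the statement); more generally the universal elliptic genus of $X$ equals $-\tfrac{3k}{2}(\delta^2-\epsilon)\neq 0$ instead of $\delta\cdot 0$, so only the $\epsilon=\delta^2$ (signature-type) specializations are multiplicative here. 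This imprecision is inherited from the survey's own wording of $(3)_{rigid}$, and it does not affect your Ochanine direction, since the fibres $\bP^{2k+1}$ occurring there are spin; but your reduction of $(3)_{rigid}$ to $S^1$-equivariant rigidity should carry the spin hypothesis (cf.\ \cite{Bott-Taubes, HBJ}).
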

 The ``only if" part was proved by S. Ochanine \cite{Ochanine} and  the ``if part" was first ``physically" proved by E. Witten \cite{Witten} and later ``mathematically" proved by C. Taubes \cite {Taubes} and also by R. Bott and C. Taubes \cite{Bott-Taubes}. See also B. Totaro's papers \cite{To1, To3}.

$$ \text {\bf cobordism counting $c_{cob}$ : Thom's Theorem}$$

$$ \text {\bf rigid genus = elliptic genus : elliptic integral}$$\


The above oriented cobordism group $\Omega^{SO}$ was extended by M. Atiyah \cite{Atiyah} to a generalized cohomology theory, i.e., the (oriented) cobordism theory $MSO^*(X)$ of a topological space $X$. The theory $MSO^*(X)$ is defined by the so-called Thom spectra, i.e, the infinite sequence  of Thom complexes $MSO(n)$: for a topological pair $(X,Y)$ with $Y \subset X$
$$MSO^k(X, Y):= \lim _{n \to \infty} [\Sigma^{n-k}(X/Y), MSO(n) ].$$
Here the homotopy group $[\Sigma^{n-k}(X/Y), MSO(n) ]$ is stable. \\

As a covariant or homology-like version of $MSO^*(X)$, M. Atiyah \cite{Atiyah} introduced the bordism theory $MSO_*(X)$ geometrically in a quite simple manner: Let $f_1:M_1 \to X$, $f_2:M_2 \to X$ be continuous maps from closed oriented $n$-dimensional manifolds to a topological space $X$. $f$ and $g$ are said to be bordant if there exists an oriented manifold $W$ with boundary and a continuous map $g:W \to X$ such that 

(1) $g|_{M_1} = f_1$ and $g|_{M_2} = f_2$, 

(2) $\partial W = M_1 \cup - M_2$ , where $- M_2$ is $M_2$ with its reverse orientation. 

\noindent It turns out that $MSO_*(X)$ is a generalized homology theory and
$$MSO^{0}(pt) = MSO_0(pt) = \Omega^{SO}.$$
M. Atiyha \cite{Atiyah} also showed the Poincar\'e duality for an oriented closed manifold $M$ of dimension $n$:
$$MSO^k (M) \cong MSO_{n-k}(M).$$

If we replace $SO(n)$ by the other groups $O(n)$, $U(n)$, $Spin(n)$, we get the corresponding cobordism and bordism theories.\\

\begin{rem} (Elliptic Cohomology)
Given a ring homomorphism $\varphi: MSO^*(pt) \to R$, $R$ is an $MSO^*(pt) $-module and 
$$MSO^*(X) \otimes _{MSO^*(pt) } R$$
becomes ``almost" a generalized cohomology theory, namely it does not necessarily satisfy the Exactness Axiom.
P. S. Landweber \cite{Landweber0} gave an algebraic criterion (called the Exact Functor Theorem) for it to become a generalized cohomology theory. Applying Landweber's Exact Functor Theorem, P. E. Landweber, D. C. Ravenel and R. E. Stong \cite{LRS} showed the following theorem:

\begin{thm} For the elliptic genus $\ga: MSO^*(pt) = MSO_*(pt) = \Omega \to \bZ[\frac {1}{2}][\delta, \epsilon]$, the following functors are generalized cohomology theories:
$$MSO^*(X) \otimes _{MSO^*(pt)}\bZ[\frac {1}{2}][\delta, \epsilon][\epsilon^{-1}], \hspace{0.9cm}$$
$$MSO^*(X) \otimes _{MSO^*(pt)}\bZ[\frac {1}{2}][\delta, \epsilon][(\delta^2 - \epsilon)^{-1}],$$
$$MSO^*(X) \otimes _{MSO^*(pt)}\bZ[\frac {1}{2}][\delta, \epsilon][\Delta^{-1}], \hspace{0.9cm}$$
where $\Delta = \epsilon (\delta^2 - \epsilon)^2.$
\end{thm}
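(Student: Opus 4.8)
The plan is to verify, for each of the three coefficient rings separately, the hypothesis of Landweber's Exact Functor Theorem quoted above, so that the resulting functor is indeed a generalized cohomology theory. Write $R$ for one of
$$R_{1}=\bZ[\tfrac{1}{2}][\delta,\epsilon][\epsilon^{-1}],\quad R_{2}=\bZ[\tfrac{1}{2}][\delta,\epsilon][(\delta^{2}-\epsilon)^{-1}],\quad R_{3}=\bZ[\tfrac{1}{2}][\delta,\epsilon][\Delta^{-1}].$$
The elliptic genus $\ga$ makes $R$ an algebra over the Lazard ring $MU_{*}$ through the \emph{elliptic formal group law}, i.e. the one-dimensional commutative formal group law over $R$ whose logarithm is the elliptic integral
$$\log_{\ga}(x)=\sum_{n}\frac{1}{2n+1}\,\ga(\bP^{2n})\,x^{2n+1}=\int_{0}^{x}\frac{dt}{\sqrt{1-2\delta t^{2}+\epsilon t^{4}}}$$
of the preceding theorem (only odd powers of $x$ occur because $MSO$ has only $2$-torsion, which is killed in $R$). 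Since $2$ is invertible in $R$, I would work one odd prime $p$ at a time, invoke the $p$-local decomposition of $MSO$ in terms of $BP$, and thus reduce the assertion to the $BP$-form of the Exact Functor Theorem: $X\mapsto MSO^{*}(X)\otimes_{MSO^{*}}R$ is a generalized cohomology theory as soon as, for every odd prime $p$, the sequence $(p,v_{1},v_{2},v_{3},\dots)$ acts regularly on $R$, where $v_{n}\in R$ denotes the image of the Araki (equivalently Hazewinkel) generator.

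The substantive step is to determine the images $v_{n}\in R$, which I would do by expanding the $p$-series $[p](x)$ of the elliptic formal group law and reading off its leading coefficients modulo $p$. The prime $p=2$ is vacuous because $R/(2)=0$. The prime $p=3$ calls for a short explicit computation; the expected outcome is that, up to units, $v_{1}\equiv\delta$ and that $\epsilon$ lies in the radical of $(3,v_{1},v_{2})$ (hence so does $\delta^{2}-\epsilon$, being congruent to $-\epsilon$ modulo $v_{1}$). For $p\ge5$ the decisive input is geometric: the Jacobi quartic $y^{2}=1-2\delta x^{2}+\epsilon x^{4}$ defines a smooth elliptic curve precisely on the locus where $\Delta=\epsilon(\delta^{2}-\epsilon)^{2}$ is invertible, and there its formal group has height at most $2$; thus $v_{1}\bmod p$ is, up to a unit, the Hasse invariant of the family, the supersingular locus $V(p,v_{1})$ lies in the smooth locus and carries $v_{2}$ as a unit, and consequently $\Delta$ lies in the radical of $(p,v_{1},v_{2})$. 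Equivalently, the ``bad locus'' on which the Landweber condition could fail is contained in $\{\Delta=0\}$ for $p\ge5$, and in $\{\epsilon=0\}\cup\{\delta^{2}-\epsilon=0\}$ for $p=3$. Proving this last assertion — that the obstruction $v_{2}\bmod(p,v_{1})$ is governed exactly by the discriminant — together with the small separate analysis at $p=3$, is the step I expect to be the main obstacle.

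Granting these computations, each of the three localizations is handled by one and the same argument. The sole ``additive'' point $\delta=\epsilon=0$, where the formal group degenerates to $\widehat{\mathbb G}_{a}$ (height $\infty$) and the Landweber condition genuinely fails, lies in each of the loci $\{\epsilon=0\}$, $\{\delta^{2}-\epsilon=0\}$ and $\{\Delta=0\}$, and is therefore excised in all three cases; along every remaining point of $\{\epsilon=0\}$ and of $\{\delta^{2}-\epsilon=0\}$ the formal group degenerates only to $\widehat{\mathbb G}_{m}$ (height $1$), so $v_{1}$ is a unit there and these points are absent from $\mathrm{Spec}(R_{i}/(p,v_{1}))$. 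Hence in each of $R_{1},R_{2},R_{3}$ the scheme $\mathrm{Spec}(R_{i}/(p,v_{1}))$ sits entirely inside the smooth, supersingular locus of the quartic — for $p\ge5$ by the above, and for $p=3$ by the explicit computation — so that $v_{2}$ is a unit modulo $(p,v_{1})$ and $R_{i}/(p,v_{1},v_{2})=0$; the conditions ``$v_{n}$ injective on $R_{i}/(p,v_{1},\dots,v_{n-1})$'' for $n\ge3$ are then automatic, while $p$ is a non-zero-divisor on $R_{i}$ and the reduction of $v_{1}$ is a non-zero-divisor on the domain $R_{i}/(p)$, so $(p,v_{1},v_{2},\dots)$ is regular. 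Finally, a Landweber-exact functor is represented by a spectrum with coefficient ring $R$, and this spectrum at once realizes $MSO^{*}(-)\otimes_{MSO^{*}}R$ as the claimed generalized cohomology theory, completing the proof.
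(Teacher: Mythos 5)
The paper itself contains no proof of this theorem: it is quoted directly from Landweber--Ravenel--Stong \cite{LRS}, so there is no in-paper argument to compare against. Your proposal reconstructs precisely the strategy of that cited source: verify Landweber's Exact Functor Theorem for the elliptic formal group law (Euler's addition formula for the Jacobi quartic $y^2=1-2\delta x^2+\epsilon x^4$) over each of the three localizations $R_i$, reducing at odd primes to $BP$ via the $p$-local splitting of $MSO$ with $2$ inverted, and checking that $(p,v_1,v_2,\dots)$ acts regularly; the prime $2$ is vacuous since $R_i/(2)=0$.

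The outline is sound, and in fact the step you flag as ``the main obstacle'' is already settled by your own geometric remark, uniformly in every odd prime including $p=3$: at any point of $\op{Spec}(R_i/(p,v_1))$ the quartic cannot be degenerate, because inside $\op{Spec}(R_i)$ the loci $\{\epsilon=0\}$ and $\{\delta^2-\epsilon=0\}$ (with the additive point $\delta=\epsilon=0$ excised in all three cases) carry a multiplicative formal group of height $1$, contradicting $v_1=0$; hence the point corresponds to a smooth elliptic curve, whose formal group has height at most $2$, so $v_2$ is invertible modulo $(p,v_1)$ and $R_i/(p,v_1,v_2)=0$. Thus no separate $p=3$ computation, and no congruence such as $v_1\equiv\delta$, is actually needed on this route; the explicit coefficient congruences (via Legendre polynomials) are what \cite{LRS} carry out, while the height-theoretic shortcut is the later streamlining due to Franke --- either works, but you should commit to one rather than mix them. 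What you should make explicit rather than leave implicit: (i) Euler's addition theorem, identifying the formal group law induced by $\ga$ on $\bZ[\frac{1}{2}][\delta,\epsilon]$ with the formal group of the Jacobi quartic, which is what lets you read off heights from the curve; (ii) that $v_1\not\equiv 0$ modulo $p$ (e.g.\ because it is a unit at a multiplicative or ordinary point), which is what makes it a non-zero-divisor on the domain $R_i/(p)$; and (iii) the passage from the $MU$/$BP$ form of Landweber exactness back to the statement for $MSO^*(-)\otimes_{MSO^*(pt)}R_i$, which your appeal to the odd-primary splitting is meant to handle. With these points filled in, your argument is complete and agrees with the proof in the literature that the paper cites.
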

 More generally J. Franke \cite{Franke} showed the following theorem:
 \begin{thm} For the elliptic genus $\ga: MSO^*(pt) = MSO_*(pt) = \Omega^{SO} \to \bZ[\frac {1}{2}][\delta, \epsilon]$, the following functor is a generalized cohomology theory:
$$MSO^*(X) \otimes _{MSO^*(pt)} \bZ[\frac {1}{2}][\delta, \epsilon][P(\delta, \epsilon) ^{-1}],$$
where $P(\delta, \epsilon)$ is a homogeneous polynomial of positive degree with $\op{deg} \delta = 4, \op{deg} \epsilon = 8$.
\end{thm}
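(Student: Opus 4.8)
The plan is to deduce this from P. S. Landweber's Exact Functor Theorem \cite{Landweber0}, exactly along the lines of \cite{LRS} recalled above, the only new ingredient being a \emph{uniform} analysis of the formal group of the elliptic genus over the localized ring $R := \bZ[\frac12][\de,\ep][P(\de,\ep)^{-1}]$. First I would note that since $2$ is inverted, $MSO^*(pt)\otimes\bZ[\frac12]$ is concentrated in even degrees, so the Landweber-exactness formalism applies: the genus $\ga$ endows $R$ with the structure of an algebra over the Lazard ring through the formal group law of $\ga$ (whose logarithm is the elliptic integral $\log_{\ga}(x)=\int_0^x dt/\sqrt{1-2\de t^2+\ep t^4}$ displayed above), and by Landweber's theorem it suffices to verify that for every prime $p$ the sequence $(p,v_1,v_2,v_3,\dots)$ — with $v_n$ the standard $p$-typical generators, $v_n$ in degree $2(p^n-1)$ — acts regularly on $R$, i.e. that $v_n$ is a non-zero-divisor on $R/(p,v_1,\dots,v_{n-1})$ for all $n$.

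Next I would dispose of the easy steps. Since $R$ is torsion-free over $\bZ[\frac12]$, multiplication by an odd prime $p$ is injective, while for $p=2$ it is a unit and $R/2=0$, so that prime contributes nothing. Fix an odd prime $p$. Then $R/p=\bF_p[\de,\ep][P^{-1}]$ is a localization of the polynomial ring $\bF_p[\de,\ep]$, hence an integral domain (we may assume $P\not\equiv 0 \bmod p$, else $R/p=0$ and there is nothing to check); and the image of $v_1$ in $R/p$ is, up to a unit, the Hasse invariant of the Jacobi-quartic family $y^2=1-2\de t^2+\ep t^4$ — a specific nonzero homogeneous element of $\bF_p[\de,\ep]$ (essentially the coefficient of $t^{p-1}$ in $(1-2\de t^2+\ep t^4)^{(p-1)/2}$). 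A nonzero element of a domain is a non-zero-divisor, so the $v_1$-step holds. (If $P$ divides a power of this Hasse invariant, then $v_1$ is already a unit in $R/p$, $R/(p,v_1)=0$, and all further conditions are vacuous; so assume not.)

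The heart of the argument is the $v_2$-step: $v_2$ must be a non-zero-divisor on $R/(p,v_1)=\bF_p[\de,\ep][P^{-1}]/(v_1)$. Here I would invoke the modular interpretation. Away from the discriminant locus $V(\De)$, $\De=\ep(\de^2-\ep)^2$, the formal group of $\ga$ is the formal completion of the elliptic curve $y^2=1-2\de t^2+\ep t^4$; along each of the two components $\ep=0$ and $\de^2=\ep$ of $V(\De)$ the quartic acquires only double roots, the cubic degenerates to a nodal curve, and the formal group becomes multiplicative, of height $1$ — the deeper, cuspidal (additive) degeneration $\de=\ep=0$ being excluded, as it is not a point of $\op{Proj}\bF_p[\de,\ep]$. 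Hence over all of $\bF_p[\de,\ep]$ the formal group of $\ga$ has height $\le 2$, with height exactly $2$ precisely on the supersingular locus $V(v_1)$. By Deuring's theorem the supersingular polynomial has only simple roots, so $V(v_1)$ is \'etale (in particular reduced) over $\bF_p$, and the implication ``$v_1=0\Rightarrow$ height $2$'' says exactly that $v_2$ vanishes nowhere on $V(v_1)$. Therefore $v_2$ is a unit in the reduced, $0$-dimensional ring $\bF_p[\de,\ep][P^{-1}]/(v_1)$, a fortiori a non-zero-divisor, and moreover $R/(p,v_1,v_2)=0$. Consequently $v_3,v_4,\dots$ act on the zero module, the regularity condition holds for all $n$ and all $p$, and Landweber's Exact Functor Theorem yields that $X\mapsto MSO^*(X)\otimes_{MSO^*(pt)}R$ is a generalized cohomology theory.

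The step I expect to be the genuine obstacle is the bookkeeping in this last paragraph: pinning down that the mod-$p$ reductions of the (Araki or Hazewinkel) generators $v_1,v_2$ agree, up to units, with the Hasse invariant and with the modular quantity that is invertible exactly on the supersingular locus, and then upgrading the pointwise fact ``$v_2\ne 0$ at every supersingular point'' to the ring-theoretic assertion ``$v_2$ is a non-zero-divisor, indeed a unit, on all of $R/(p,v_1)$''. This upgrade rests on the reducedness of $V(v_1)$ and on keeping careful track of the weighted grading ($\op{deg}\de=4$, $\op{deg}\ep=8$) when passing between the graded ring and sections of line bundles on the modular curve; everything else in the proof is formal.
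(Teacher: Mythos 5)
The paper itself contains no proof of this statement: it is quoted as Franke's theorem with the citation \cite{Franke}, just as the three special localizations before it are quoted from \cite{LRS}. So your proposal can only be measured against those sources, and your overall route is indeed theirs: invert $2$, pass to the complex-oriented setting, and verify Landweber's regularity criterion \cite{Landweber0} for the Jacobi-quartic formal group law, the geometric input being that over $\bF_p[\delta,\epsilon]$ the height is $1$ on the degenerate locus away from the origin, at most $2$ where the curve is elliptic, and $>2$ only at the ``cusp'' $\delta=\epsilon=0$. Your $p$- and $v_1$-steps are fine.

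The genuine gap is in the justification of the $v_2$-step, and it sits exactly where the hypothesis on $P$ must enter. You exclude the cuspidal point because ``it is not a point of $\op{Proj}\bF_p[\delta,\epsilon]$'', but Landweber's criterion is checked on the actual ring $R/p=\bF_p[\delta,\epsilon][P^{-1}]$, i.e.\ on an open subscheme of $\op{Spec}\bF_p[\delta,\epsilon]$; the origin is removed there only because $P$ is homogeneous of \emph{positive} degree, so that $P\in(\delta,\epsilon)$. That hypothesis is never invoked in your argument, yet without it the theorem is false: for $P=1$ the module $R/(p,v_1,v_2)$ is nonzero and supported at the origin, where the formal group is additive, so $v_3\in(p,\delta,\epsilon)$ cannot act injectively on it. Moreover $\bF_p[\delta,\epsilon][P^{-1}]/(v_1)$ is \emph{not} a ``reduced, $0$-dimensional ring'': $V(v_1)$ is the one-dimensional affine cone over the finite supersingular set of the weighted projective line, so Deuring's simple-roots theorem and \'etaleness do not apply as stated, and your closing plan of resting the ``pointwise nonvanishing $\Rightarrow$ unit'' upgrade on reducedness is founded on a false claim (also, in your aside, $v_1$ is a unit in $R/p$ when $v_1$ divides a power of $P$, not the other way around). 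The correct upgrade is simpler and needs no reducedness or dimension count: an element of a commutative ring is a unit iff it lies in no prime ideal, and since the locus $\{p=v_1=v_2=0\}$ is contained set-theoretically in $\{\delta=\epsilon=0\}\subseteq V(P)$, the element $v_2$ lies in no prime of $\bF_p[\delta,\epsilon][P^{-1}]/(v_1)$, hence is a unit there and $R/(p,v_1,v_2)=0$, making the higher conditions vacuous. With that replacement, and with the standard (but not free) comparison $MSO_*(-)\otimes_{MSO_*}R\cong MU_*(-)\otimes_{MU_*}R$ after inverting $2$ as in \cite{LRS}, your outline becomes a correct proof along Franke's lines.
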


The generalized cohomology theory 
$$MSO^*(X) \otimes _{MSO^*(pt)}\bZ[\frac {1}{2}][\delta, \epsilon][P(\delta, \epsilon) ^{-1}]$$
 is called an \emph {elliptic cohomology theory} (for a recent survey of it see J. Lurie's paper \cite{Lu}). It is defined in an algebraic manner,
but not in a more topologically or geometrically simpler manner as in K-theory or the bordism theory $MSO_*(X)$.
So, people have been searching for a reasonable geometric or
topological construction of the elliptic cohomology (cf. \cite{KrSt}).\\
\end{rem}

\begin{rem} (Just a mumbo jumbo) In the above we see that if you just count points of a variety simply as a set, we get an infinity unless it is a finite set or the trivial one $0$, but that if we count it ``respecting" the topological and algebraic structures you get a certain reasonable number which is not an infinity. Getting carried away, the ``zeta function-theoretic" formulae such as
$$1 + 1 + 1 + \cdots + 1 + \cdots = - \frac{1}{2} = \zeta(0)$$
$$1 + 2 + 3 + \cdots + n + \cdots = - \frac{1}{12} = \zeta(-1)$$
$$1 + 2 ^2+ 3^2 + \cdots + n^2 + \cdots = 0 = \zeta(-2)$$
$$1^3 + 2^3 + 3^3 +\cdots + n^3 + \cdots = \frac{1}{120} = \zeta(-3)$$
$$\cdots$$
could be considered as some kind of  counting an inifite set ``respecting" some kind of ``zeta-structure" on it, whatever the zeta-structure is.  In nature, the above equality $1^3 + 2^3 + 3^3 +\cdots + n^3 + \cdots = \frac{1}{120}$ is explained as the \emph {Casimir Effect} (after Dutch physicists Hendrik B. G. Casimir). So,  nature perhaps already knows what the ``zeta-structure" is. It would be fun (even non-mathematically) to wonder or imagine what would be a ``zeta-structure" on the natural numbers $\bN$, or the integers $\bZ$ or the rational numbers $\bQ$, or more generally ``zeta-structured" spaces or varieties.  Note that, as the topological counting $c_{top} = \chi$ was found by Euler, the ``zeta-theoretical counting" (denoted by $c_{zeta}$ here)  was also found by Euler ! \\
\end{rem}

\section {Motivic Characteristic Classes}

Any ``algebraic counting" $c_{alg}$ gives rise to the following na\" \i ve ring homomorphism to a commutative ring $R$ with unit $1$ :

$$c_{alg} : Iso (\Cal {VAR}) \to R \quad \text {defined by} \quad c_{alg}([X]) : = c_{alg}(X).$$ \

Here $Iso (\Cal {VAR}) $ is the free abelian group generated by the isomorphism classes $[X]$ of complex varieties. The following additivity relation 
$$\text {$c_{alg}([X]) = c_{alg}([X \setminus Y]) + c_{alg}([Y])$ for any closed subvariety $Y \subset X$}$$
in other words,
$$\text {$c_{alg}([X] -[Y] - [X \setminus Y]) = 0 $ for any closed subvariety $Y \subset X$ }$$
induces the following finer ring homomorphism:\\
$$c_{alg} : K_0(\Cal {VAR}) \to R \quad \text {defined by} \quad c_{alg}([X]) : = c_{alg}(X).$$ \\ 
Here $K_0(\Cal {VAR})$ is the Grothendieck ring of complex algebraic varieties, i.e., $Iso (\Cal {VAR})$ modulo the following additivity relation 
$$ \text {$[X] = [X \setminus Y] + [Y]$ for any closed subvariety $Y \subset X$}$$
or, in other words, $Iso (\Cal {VAR})$  mod out the subgroup generated by the elements of the form \\
$$[X] - [Y] - [X \setminus Y]$$ 
for any closed subvariety $Y \subset X$.

The equivalence class of $[X]$ in $K_0(\Cal {VAR})$ should be written as, say $[[X]]$, but we just use the symbol $[X]$ for the sake  of simplicity. 

More generally, let $y$ be an indeterminate and we can consider the following homomorphism $c_{alg} := \chi_y := \chi_{y, -1}$, i.e.,  
$$c_{alg} : K_0(\Cal {VAR}) \to \bZ[y] \quad \text {with} \quad c_{alg}(\bC^1) = -y.$$\
This shall be called \emph{\bf a motivic characteristic}, to emphasize the fact that its domain is the Grothendieck ring of varieties.\\

\begin{rem} In fact, for the category $\Cal {VAR}(k)$ of algebraic varieties over any field, the above Grothendieck ring $K_0(\Cal {VAR}(k))$ can be defined in the same way. 
\end{rem}

What we want to do is an analogue to the way that Grothendieck extended the celebrated Hirzebruch--Riemann--Roch Theorem (which was the very beginning of the Atiyah--Singer Index Theorem) to Grothendieck--Riemann--Roch Theorem. Namely we want to solve the following problem:

\begin{prob}\label{problem1} Let $R$ be a commutative ring with unit $1$ such that $\bZ \subset R$, and let $y$ be an indeterminate and Do there exist some covariant functor $\spadesuit$ and some natural transformation (here pushforwards are considered for proper maps)
$$\natural : \spadesuit(\quad)  \to  H_*^{BM}(\quad) \otimes R[y]$$
 such that
 \begin{enumerate}
\item
$\spadesuit(pt) = K_0(\Cal {VAR})$ ,

\item 
$\natural (pt) = c_{alg}$, i.e., 
$$\natural (pt) = c_{alg} : \spadesuit(pt) = K_0(\Cal {VAR})  \to  R[y] = H_*^{BM}(pt)\otimes R[y].$$

\item
For the mapping $\pi_X: X \to pt$ to a point, for a certain distinguished element $\Delta_X \in \spadesuit(X)$
we have
$$ {\pi_X}_* (\natural(\Delta_X)) = c_{alg}(X) \in R[y] \quad \text {and} \quad {\pi_X}_*(\Delta_X) = [X] \in K_0(\Cal {VAR})  \quad ?$$
$$\CD
\spadesuit(X) @> \natural(X)  >>  H_*^{BM}(X) \otimes R[y] \\
@V {{\pi_X}_*} VV @VV {{\pi_X}_*} V\\
 \spadesuit(pt) = K_0(\Cal {VAR}) @>> {\natural(pt) = c_{alg}} > R[y] .  \endCD
$$
(If there exists such one, then $\natural(\Delta_X)$ could be called the {\bf motivic characteristic class} corresponding to the motivic characteristic $c_{alg}(X)$, just like the Poincar\'e dual of the total Chern cohomology class $c(X)$ of a complex manifold $X$ corresponds to the Euler--Poincar\'e characteristic $\chi(X)$, i.e., ${\pi_X}_*(c(X) \cap [X]) = \chi(X)$.)
\end{enumerate}
\end{prob}

A more concrete one for the Hodge--Deligne polynomial (a prototype of this problem was considered  in \cite{Yokura-hodge} (cf. \cite{Y4}):
\begin{prob}\label{problem2}  Let $R$ be a commutative ring with unit $1$ such that $\bZ \subset R$ , and let $u, v$ be two indeterminates. Do there exist some covariant functor $\spadesuit$ and some natural transformation  (here pushforwards are considered for proper maps)
$$\natural : \spadesuit(\quad)  \to  H_*{BM}(\quad) \otimes R[u,v]$$
 such that
 \begin{enumerate}
\item
$\spadesuit(pt) = K_0(\Cal {VAR})$ ,

\item 
$\natural (pt) = \chi _{u,v}$, i.e., 
$$\natural (pt) = \chi _{u,v} : \spadesuit(pt) = K_0(\Cal {VAR})  \to  R[u, v] = H_*^{BM}(pt)\otimes R[u,v] .$$

\item
For the mapping $\pi_X: X \to pt$ to a point, for a certain distinguished element $\Delta_X \in \spadesuit(X)$
we have
$$ {\pi_X}_* (\sharp(\Delta_X)) = \chi_{u,v}(X) \in R[u,v] \quad \text {and} \quad {\pi_X}_*(\Delta_X) = [X] \in K_0(\Cal {VAR})  \quad ?$$\\
\end{enumerate}
\end{prob}

One reasonable candidate for the covariant functor $\spadesuit$ is the following:
\begin{defn} (e.g., see \cite {Lo}) \emph{The relative Grothendieck group of $X$}, denoted by
$$K_0(\Cal {VAR}/X)$$
is defined to be the free abelian group $Iso(\Cal {VAR}/X)$ generated by the isomorphism classes $[V  \xrightarrow{h}  X]$ of morphsim of complex algebraic varieties over $X$, $h:V \to X$,  modulo the following additivity relation
$$ \text {$[V  \xrightarrow{h}  X] = [V \setminus Z  \xrightarrow{{h_|}_{V \setminus Z}}  X + [Z \xrightarrow{{h_|}_Z}  X]$ for any closed subvariety $Z \subset V$},$$
namely, $Iso(\Cal {VAR}/X)$ modulo the subgroup generated by the elements of the form
$$[V  \xrightarrow{h}  X] - [Z \xrightarrow{{h_|}_Z}  X] - [V \setminus Z  \xrightarrow{{h_|}_{V \setminus Z}}  X] $$ 
for any closed subvariety $Z \subset V$. 
\end{defn}

\begin{rem} For the category $\Cal {VAR}(k)$ of algebraic varieties over any field, we can consider the same relaive Grothendieck ring $K_0(\Cal {VAR}(k)/X).$ 
\end{rem} 

\noindent
{\bf NOTE 1}: $K_0(\Cal {VAR}/pt) = K_0(\Cal {VAR})$ 

\noindent
{\bf NOTE 2}: $K_0(\Cal {VAR}/X)$\footnote{According to a recent paper by M. Kontsevich (``Notes on motives in finite characteristic", math.AG/ 0702206), Vladimir Drinfeld calls an element of $K_0(\Cal {VAR}/X)$ ``poor man's motivic function"} is a covariant functor with the obvious pushforward: for a morphism 
$f : X \to Y$, the pushforward
$$f_*: K_0(\Cal {VAR}/X) \to K_0(\Cal {VAR}/Y)$$
is defined by
$$f_*([V  \xrightarrow{h}  X]) := [V  \xrightarrow{f \circ h}  Y].$$

\noindent
{\bf NOTE 3}: Although we do not need the ring structure  on $K_0(\Cal {VAR}/X)$ in later discussion, the fiber product gives a ring structure on it:
$$[V_1  \xrightarrow{h_1}  X] \cdot [V_2  \xrightarrow{h_2}  X] := [V_1 \times _X V_2  \xrightarrow{h_1 \times _X h_2}  X]$$ 

\noindent
{\bf NOTE 4}: If $\spadesuit(X) = K_0(\Cal {VAR}/X) $, then the distinguished element $\Delta_X$ is the isomorphism class of the identity map:
$$\Delta_X = [X  \xrightarrow{\op {id}_X}  X].$$

If we impose one more requirement in the above Problem \ref{problem1} and  Problem \ref{problem2} , we can solve the problem. The additional one is the following \emph{normalization condition} or \emph{``smooth condition"} that for nonsingular $X$
$$\natural (\Delta_X) = c\ell(TX) \cap [X]$$
for a certain ``normalized" multiplicative characteristic class $c\ell$ of complex vector bundles. Note that $c\ell$ is a polynomial in the Chern classes such that it satisfies the normalization that. Here ``normalized" means that $c\ell(E) = 1$ for any trivial bundle $E$ and ``multiplicative" means that $c\ell(E \oplus F) = c\ell(E)c\ell(F)$,  which is called the \emph{Whitney sum formula}. As to the Whitney sum formula, in the analytic or algebraic context, one askes for this multiplicativity for a short exact sequence of vector bundles (which splits only in the topological context):
$$c\ell(E) = c\ell(E')c\ell(E'') \quad \text {for } \quad 1 \to E' \to E \to E'' \to 1.$$The above extra requirement of ``smooth condition" turns out to be a quite natural one in the sense that the other well-known/studied characteristic homology classes of possibly singular varieties are formulated as natural transformations satisying such a normalization condition, as recalled later. Also, as made in Conjecture \ref {conjecture1} in a later section, this seemingly strong requirement of normalization condition could be eventually dropped. \\ 

\begin{ob} Let $\pi_X: X \to pt$ be the mapping to a point. Then it follows from the naturality of $\natural$ and the above normalization condition  that for a nonsingular variety $X$ we have
\begin{align*}
 c_{alg}([X]) &= \natural ({\pi_X}_* ([X  \xrightarrow{\op {id}_X}  X])) \\
 & = {\pi_X}_*(\natural ([X  \xrightarrow{\op {id}_X}  X]))\\
 & = {\pi_X}_* (c\ell(TX) \cap [X]).
 \end{align*}
Therefore the above normaization condition on nonsingular varieties implies that for a nonsingular variety $X$ the ``algebraic counting" $c_{alg}(X)$ has to be a characteristic number or Chern number \cite{Ful, MiSt}. Thus it is another requirement on $c_{alg}$, but it is an inevitable one if we want to capture it functorially (i.e., like a Grothendieck--Riemann--Roch type) together with the above normalization condition for smooth varieties.\\
\end{ob}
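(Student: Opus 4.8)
The plan is to read the displayed chain of equalities as three successive, essentially formal, applications of the data imposed on $\natural$ in Problem~\ref{problem1}, and then to recognize the resulting expression as a characteristic number by definition.

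First I would record the bookkeeping behind the first equality. By {\bf NOTE 4} the distinguished element of $\spadesuit(X) = K_0(\Cal{VAR}/X)$ is $\Delta_X = [X \xrightarrow{\op{id}_X} X]$, and by the pushforward formula of {\bf NOTE 2} we have ${\pi_X}_*(\Delta_X) = [X \xrightarrow{\pi_X} pt] = [X] \in K_0(\Cal{VAR}) = \spadesuit(pt)$. Combining this with the requirement $\natural(pt) = c_{alg}$ gives $c_{alg}([X]) = \natural(pt)\bigl({\pi_X}_*(\Delta_X)\bigr)$, which is the first line. The second line is precisely the commutativity of the naturality square of Problem~\ref{problem1} for the proper morphism $\pi_X : X \to pt$: the identity $\natural(pt)\circ{\pi_X}_* = {\pi_X}_*\circ\natural(X)$ of maps $\spadesuit(X)\to H_*^{BM}(pt)\otimes R[y]$, evaluated on $\Delta_X$, yields $\natural(pt)\bigl({\pi_X}_*(\Delta_X)\bigr) = {\pi_X}_*\bigl(\natural(\Delta_X)\bigr)$. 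The third line is then nothing but the normalization (``smooth'') condition $\natural(\Delta_X) = c\ell(TX)\cap[X]$, which applies because $X$ is assumed nonsingular.

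It then remains to observe that ${\pi_X}_*\bigl(c\ell(TX)\cap[X]\bigr)$ lands in $H_*^{BM}(pt)\otimes R[y] = R[y]$ and is, by the very definition of the pushforward to a point, the degree-zero component of $c\ell(TX)\cap[X]$ --- equivalently, the top-dimensional part of the cohomology class $c\ell(TX)$ evaluated against the fundamental class $[X]$. Since $c\ell$ is by hypothesis a polynomial in the Chern classes, this is an $R[y]$-linear combination of the Chern numbers $\int_X c_{i_1}(TX)\cdots c_{i_r}(TX)$ of $X$, i.e.\ a characteristic (Chern) number; hence $c_{alg}(X)$ is forced to coincide with it for every nonsingular $X$, which is the assertion. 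This is a genuine extra constraint beyond the four axioms defining an ``algebraic counting'' $c_{alg}$, but an unavoidable one once a Grothendieck--Riemann--Roch-type functorial lift satisfying the normalization condition is demanded --- the same phenomenon already visible in the formula for $c_{alg}(\bP^n)$ in Proposition~\ref{alg-c}. There is essentially no hard step here: the content sits entirely in the hypotheses, and the argument is pure diagram bookkeeping. The one point deserving a word of care is the properness of $\pi_X$, which holds exactly when $X$ is complete; as literally stated the chain therefore pertains to smooth \emph{complete} varieties (which is all that is needed for a statement about Chern numbers), and for a general nonsingular $X$ one works directly with the proper pushforward on Borel--Moore homology, or reduces to the complete case via a smooth compactification together with the additivity relation in $K_0(\Cal{VAR}/X)$.
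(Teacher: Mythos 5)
Your proposal is correct and is essentially the paper's own argument: the same formal chase combining the pushforward of the distinguished element $\Delta_X=[X \xrightarrow{\op{id}_X} X]$ (NOTES 2 and 4), the condition $\natural(pt)=c_{alg}$, naturality of $\natural$ applied to $\pi_X$, and the normalization condition, followed by the observation that ${\pi_X}_*(c\ell(TX)\cap[X])$ is a polynomial in Chern numbers. Your closing remark about properness of $\pi_X$ (so that the chain literally applies to smooth complete $X$) is a reasonable point of care that the paper leaves implicit.
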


Furthermore, this normalization condition turns out to be quite essential and in fact it automatically determines the characteristic class $c\ell$ as follows, if we consider the bigger ring $\bQ[y]$ instead of $\bZ[y]$:

\begin{pro} If the above normalization condition is imposed in the above problems, then the multiplicative characteristic class $c\ell$ with coefficients in $\bQ[y]$ has to be the generalized Todd class, or the Hirzebruch class $T_y$, i.e., for a complex vector bundle $V$
$$T_y(V) := \prod _{i =1}^{rank V}\left (\frac {\alpha_i (1+y)}{1 - e^{-\alp_i(1+y)}} - \alp_i y \right )$$
with $\alp_i$ are the Chern roots of the vector bundle, i.e., $\displaystyle c(V) = \prod _{i =1}^{rank V}(1 + \alp_i)$.
\end{pro}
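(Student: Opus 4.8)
The plan is to reduce the statement to a computation on complex projective spaces, where Proposition~\ref{alg-c} already pins down the value of $c_{alg}$. First I would observe that by Theorem~\ref{cob-structure}(2) the rational cobordism ring $\Omega^U_* \otimes \bQ$ is the polynomial algebra $\bQ[\bP^1, \bP^2, \bP^3, \dots]$, so a multiplicative characteristic class $c\ell$ with values in $\bQ[y]$ — equivalently the genus $\ga_{c\ell} \colon \Omega^U_* \otimes \bQ \to \bQ[y]$ defined by $\ga_{c\ell}(M) = {\pi_M}_*(c\ell(TM) \cap [M])$ — is completely determined by the sequence of values $\ga_{c\ell}(\bP^n) \in \bQ[y]$ for all $n \ge 0$. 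Conversely, any prescribed sequence of values on the $\bP^n$ is realized by a unique such multiplicative class. So it suffices to show that the normalization condition forces $\ga_{c\ell}(\bP^n) = \chi_y(\bP^n) = 1 - y + y^2 - \cdots + (-y)^n$, and then to check that the Hirzebruch class $T_y$ is exactly the multiplicative class producing these numbers.

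The key steps, in order: (i) Apply the normalization condition $\natural(\Delta_X) = c\ell(TX) \cap [X]$ together with the commuting square of Problem~\ref{problem1}(3) and the compatibility $\natural(pt) = c_{alg} = \chi_y$ to the smooth projective variety $X = \bP^n$; pushing forward to a point gives ${\pi_{\bP^n}}_*(c\ell(T\bP^n) \cap [\bP^n]) = c_{alg}(\bP^n)$. (ii) Invoke Proposition~\ref{alg-c}: since $c_{alg}(\bC^1) = -y$, the right-hand side is $\sum_{i=0}^n (-y)^i$. Hence $\ga_{c\ell}(\bP^n) = \sum_{i=0}^n (-y)^i$ for every $n$. (iii) Recall Hirzebruch's classical computation (the same one that produces the $\chi_y$-genus): for the multiplicative class associated to the power series $Q(\alpha) = \dfrac{\alpha(1+y)}{1 - e^{-\alpha(1+y)}} - \alpha y$, the resulting genus of $\bP^n$ is precisely $\sum_{i=0}^n (-y)^i$. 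This is the defining property of the Hirzebruch $\chi_y$-genus and can be verified by the standard residue/generating-function argument: the genus of $\bP^n$ is the coefficient extraction $\bigl[\text{coeff of } \alpha^n\bigr]\, Q(\alpha)^{n+1}$, and with this $Q$ one gets the truncated geometric series. (iv) Since the multiplicative class is uniquely determined by its values on the $\bP^n$ by step (i)'s use of Milnor's theorem, $c\ell$ must coincide with the class $T_y$ defined by the displayed product formula $T_y(V) = \prod_i \bigl(\frac{\alpha_i(1+y)}{1 - e^{-\alpha_i(1+y)}} - \alpha_i y\bigr)$.

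One technical point to handle carefully is \emph{why} a multiplicative $\bQ[y]$-valued characteristic class is determined by its values on the $\bP^n$: this is where torsion-freeness of $\bQ[y]$ and the polynomial-generator structure of $\Omega^U_* \otimes \bQ$ enter, exactly as in the discussion preceding Problem~\ref{all genera}. A multiplicative class $c\ell$ corresponds to a formal power series $Q(\alpha) = 1 + \cdots \in \bQ[y][[\alpha]]$ via $c\ell(V) = \prod_i Q(\alpha_i)$, and the map $Q \mapsto (\ga_{c\ell}(\bP^n))_n$ is a bijection onto sequences in $\bQ[y]$ because the Chern number $\langle Q(\alpha)^{n+1}, [\bP^n]\rangle$ determines the degree-$n$ coefficient of $Q$ recursively (the leading contribution is that coefficient itself, plus lower-order terms already known).

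The main obstacle I expect is step (iii): verifying that the specific power series in the statement yields the truncated geometric series $\sum_{i=0}^n(-y)^i$ on every $\bP^n$. This is a genuine generating-function identity and is the heart of Hirzebruch's original treatment of the $\chi_y$-genus; I would either cite it from \cite{Hi} or reproduce the short computation, noting the normalization $Q(0)=1$ (so $c\ell$ is indeed normalized) and the specializations $y=-1,0,1$ recovering the Todd class, the $\chi_y$ at those values, etc. Everything else — the reduction via Milnor's theorem, the pushforward bookkeeping — is formal.
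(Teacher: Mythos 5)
Your proposal is correct and takes essentially the same route as the paper: the normalization condition together with naturality forces the genus of $\bP^n$ to be $1 - y + y^2 - \cdots + (-y)^n$, and the class is then pinned down by Hirzebruch's characterization of normalized multiplicative classes by their values on the projective spaces. The only difference is that the paper simply cites Hirzebruch's Theorem 10.3.1 of \cite{Hi} (after noting multiplicativity of the Chern number), whereas you unpack that citation via Milnor's theorem and the coefficient recursion for $Q(\alpha)^{n+1}$ on $\bP^n$, plus the verification that $T_y$ produces the truncated geometric series.
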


\begin{proof} We note that the multiplicativity of $c\ell$ gurantees that for two smooth compact varieties $X$ and $Y$, we have
$${\pi_{X \times Y}}_*(c\ell(T(X \times Y) \cap [X \times Y]) = {\pi_X}_*(c\ell(TX) \cap [X]) \cdot {\pi_Y}_*(c\ell(TY) \cap [Y]),$$
i.e., the Chern number is multiplicative, i.e., it is compatible with the multiplicativity of $c_{alg}$. Now Hirzebruch's theorem \cite[Theorem 10.3.1]{Hi} 
says that if the multiplicative Chern number defined by a multiplicative characteristic class $c\ell$ with coefficients in $\bQ[y]$ satisfies that the corresponding characteristic number of the complex projective space $\bP^n$ is equal to $1 -y +y^2 -y^3 + \cdots + (-y)^n$, then the multiplicative characteristic class $c\ell$ has to be the generalized Todd class, i.e., the Hirzebruch class $T_y$ above. \\
\end{proof}

\begin{rem}
 In other words, in a sense $c_{alg}(\bC^1)$ uniquely determines the class version of the motivic characteristic $c_{alg}$, i.e., the motivic characteristic class. This is very similar to the fact foreseen that $c_{top}(\bR^1) = -1$ uniquely determines the ``topological counting" $c_{top}$.
\end{rem}
\noindent
{\bf IMPORTANT NOTE}: This Hirzebruch class $T_y$ specializes to the following important characteristic classes:\

\hspace {1cm} $\displaystyle  y = -1: T_{-1}(V) = c(V) = \prod _{i =1}^{rank V}(1 + \alp_i)$ the total Chern class

\hspace {1cm} $\displaystyle y= 0: \quad T_0(X) = td(V) = \prod _{i =1}^{rank V}\frac {\alpha_i }{1 - e^{-\alp_i}} $ the total Todd class

\hspace {1cm} $\displaystyle y= 1: \quad T_{1}(X) = L(V) = \prod _{i =1}^{rank V}\frac {\alpha_i }{\tanh \alp_i} $ the total Thom--Hirzebruch class. \\

Now we are ready to state our answer for Problem \ref{problem1}, which is one of the main theorems of \cite{BSY1}:

\begin{thm} \label{theorem1} (Motivic Characteristic Classes) Let $y$ be an indeterminate. 
\begin{enumerate}
\item
There exists a unique natural transformation
$${T_y}_*:  K_0(\Cal {VAR}/X) \to H_*^{BM}(X)\otimes \bQ[y]$$
satisfying the normalization condition that for a nonsingular variety $X$
$${T_y}_*([X \xrightarrow{id_{X}}  X]) =  T_y(TX) \cap [X].$$

\item 
For $X = pt$,
$${T_y}_*: K_0(\Cal {VAR}) \to \bQ[y]$$
is equal to the Hodge--Deligne polynomial  
 $$\chi_{y, -1}:K_0(\Cal {VAR}) \to  \bZ[y] \subset \bQ[y].$$
 namely,
$${T_y} _*([X \to pt]) = \chi_{y, -1}([X]) = \sum_{i, p\geq 0} (-1)^i \op {dim}_{\bC} (Gr^p_F H^i_c(X, \bC)) (-y)^p.$$
$\chi_{y, -1}(X)$ is simply denoted by $\chi_y(X)$.
\end{enumerate}
\end{thm}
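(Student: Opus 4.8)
The plan is to build the transformation $T_{y*}$ by factoring through a resolution-based presentation of $K_0(\mathcal{VAR}/X)$ and then pinning down the normalization via the earlier Hirzebruch computation. First I would establish a convenient set of generators: by resolution of singularities (Hironaka) and induction on dimension, $K_0(\mathcal{VAR}/X)$ is generated by classes $[W \xrightarrow{h} X]$ with $W$ smooth and $h$ proper, indeed even projective; the scissor relations let one cut down from an arbitrary $[V\xrightarrow{h} X]$ by stratifying $V$ into smooth pieces and compactifying. So uniqueness is almost immediate: if $T_{y*}$ exists and is natural for proper pushforward, then on a generator $[W\xrightarrow{h}X]$ with $W$ smooth one must have $T_{y*}([W\xrightarrow{h}X]) = h_*(T_{y*}([W\xrightarrow{\mathrm{id}}W])) = h_*(T_y(TW)\cap[W])$, which is forced by naturality plus the normalization condition. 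Hence the only content is \emph{existence}, i.e.\ that this assignment is well-defined — independent of the choice of smooth generators representing a given element.

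The existence argument I would run is the one from \cite{BSY1}: define a candidate map on the free group $\mathrm{Iso}(\mathcal{VAR}/X)$ on smooth-source proper generators by $[W\xrightarrow{h}X]\mapsto h_*(T_y(TW)\cap[W])$ (extended to general sources via a chosen stratification/compactification), and then verify it descends to $K_0(\mathcal{VAR}/X)$. The key technical input is the comparison of two different resolutions: given a proper birational morphism between smooth varieties, or more generally a situation governed by the weak factorization theorem of Abramovich--Karu--Matsuki--W\l odarczyk, one reduces to checking invariance under a single blow-up along a smooth center. There the Hirzebruch class of the blow-up is related to that of the base by the excess/blow-up formula, and the scissor relation $[W] = [\widetilde W] - [E] + [\text{center}]$ on the Grothendieck side must match the corresponding identity among the $T_y$-classes pushed to $X$. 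This blow-up compatibility for $T_y$ is exactly where the specific form of the Hirzebruch class enters and where the bulk of the computation lives. Granting the abstract framework, one can also invoke the alternative construction route: $T_{y*}$ can be assembled from MacPherson's Chern class transformation $c_*$ (the $y=-1$ specialization) together with the Baum--Fulton--MacPherson transformation $\mathrm{td}_*$, interpolated using Saito's mixed Hodge modules, which furnishes a motivic lift $\mathrm{mC}_*\colon K_0(\mathcal{VAR}/X)\to G_0(X)\otimes\mathbb{Z}[y]$ followed by a twisted Todd transformation.

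For part (2), I would specialize $X = pt$. Then $H_*^{BM}(pt)\otimes\mathbb{Q}[y] = \mathbb{Q}[y]$, and $T_{y*}\colon K_0(\mathcal{VAR})\to\mathbb{Q}[y]$ is, by part (1), a ring homomorphism (multiplicativity on smooth projective generators follows from the Whitney formula for $T_y$ and the product formula for tangent bundles, then extends by additivity) sending a smooth projective $X$ to ${\pi_X}_*(T_y(TX)\cap[X])$, which is by definition Hirzebruch's $\chi_y$-genus. On the other hand the Hodge--Deligne specialization $\chi_{y,-1}$ is also additive on $K_0(\mathcal{VAR})$ and, for smooth projective $X$, equals $\sum_{p}(-1)^p\dim H^{p}(X,\Omega^q)\cdots$ — precisely $\chi_y(X)$ by Hirzebruch's Riemann--Roch computation \cite[Thm.~10.3.1]{Hi}, the same theorem quoted just above. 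Two additive invariants on $K_0(\mathcal{VAR})$ that agree on smooth projective varieties agree everywhere, since those classes generate (again by resolution of singularities); hence $T_{y*}(pt) = \chi_{y,-1}$. The displayed formula $\sum_{i,p}(-1)^i\dim_{\mathbb C}(Gr^p_F H^i_c(X,\mathbb C))(-y)^p$ is then just the definition of $\chi_{y,-1}$ read off from the Hodge--Deligne polynomial with $v=-1$, $u=-y$.

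\textbf{Main obstacle.} The hard part is the well-definedness in part (1): proving that $h_*(T_y(TW)\cap[W])$ is genuinely independent of the smooth representative, equivalently that the assignment respects the blow-up (scissor) relations. This requires either the weak factorization theorem plus an explicit blow-up formula for the Hirzebruch class, or — the route actually taken in \cite{BSY1} — routing through mixed Hodge modules to get the additivity for free from Saito's theory. Everything else (uniqueness, the point-case identification, multiplicativity) is then formal.
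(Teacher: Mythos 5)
Your proposal is correct and follows essentially the same route as the paper: uniqueness from resolution of singularities plus naturality and the normalization condition, existence by descending the smooth-proper-generator assignment through the blow-up (Bittner/weak factorization) relations or, equivalently, by routing through the motivic Chern class $\Lambda_y^{mot}=mC_*$ (Saito's mixed Hodge modules) followed by the twisted Todd transformation, and part (2) by gHRR on smooth compact generators of $K_0(\Cal{VAR})$. The only cosmetic difference is that the paper verifies the blow-up relation at the level of $G_0(X)\otimes\bZ[y]$ (for the classes $[\Omega^p]$, via Gros or Guill\'en--Navarro Aznar) rather than directly for $T_y(TW)\cap[W]$ in homology, which amounts to the same thing after applying $\widetilde{td_{*(y)}^{BFM}}$.
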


\begin{proof} (1) : The main part is of course the existence of such a ${T_y}_*$, the proof of which is outlined in a later section. Here we point out only the uniqueness of ${T_y} _*$, which follows from resolution of singularities. More precisely it follows from
\begin{enumerate}

\item [(i)]
Nagata's compactification theorem, or the projective closure of affine subvarieties if we do not use the fancy Nagata's compactification theorem: We get the following surjective homomorphism
$$ A: Iso^{prop}(\Cal {VAR}/X) \twoheadrightarrow K_0(\Cal {VAR}/X) $$
where $ Iso^{prop}(\Cal {VAR}/X) $ is the free abelian group generated by the isomorphism class of \emph{proper }morphisms to $X$.

\item[(ii)]
Hironaka's resolution of singularities (i.e., we can show by the resolution of singularities and by the induction on dimension that any isomorphism class $[Y \xrightarrow{h}  X]$ can  be expressed as 
$$\sum _{V} a_V [V \xrightarrow{h_V}  X]$$
with $V$ being nonsingular and $h_V : V \to X$ being proper.): Here we get the following surjective maps

$$ Iso^{prop}(\Cal {SM}/X) \twoheadrightarrow  Iso^{prop}(\Cal {VAR}/X), $$
therefore
$$B:  Iso^{prop}(\Cal {SM}/X) \twoheadrightarrow K_0(\Cal {VAR}/X). $$
Here $ Iso^{prop}(\Cal {SM}/X) $ is the the free abelian group generated by the isomorphism class of \emph{proper }morphisms from \emph{smooth varieties} to $X$.

\item[(iii)]
The above normalization condition (or the ``smooth" condition).

\item[(iv)] The naturality of ${T_y} _*$ .

\end{enumerate}
 The above two surjective homomorphisms $A$ and $B$ also play some key roles in the proof of the existence of ${T_y}_*$.
 
 (2):  As pointed out in (ii), $K_0(\Cal {VAR})$ is generated by the isomorphism classes of compact smooth varieties.  On a nonsingular compact variety $X$ we have

$$\chi_{y,-1}(X) = \sum_{p,q \geq 0} (-1)^q \op {dim}_{\bC} H^q(X; \Omega_X^p) y^p,$$
which is denoted by $\chi_y(X)$ and is called the Hirzebruch's  $\chi_y$-genus.
Next we have the following {\bf generalized Hirzebruch--Riemann--Roch Theorem} (abbr. , gHRR) \cite{Hi}
$$\chi_y(X) = \int _X T_y(TX) \cap [X].$$ 
Since $\displaystyle \int _X T_y(TX) \cap [X] = {\pi_X}_*(T_y(TX) \cap [X]) = {T_y}_*([X \to pt])$, we have the following equality on the generators of $K_0(\Cal {VAR})$
 $${T_y}_*([X \to pt]) = \chi_{y,-1}([X])$$ 
 thus on the whole $K_0(\Cal {VAR})$ we must have that ${T_y}_*= \chi_{y,-1}$.
\end{proof}

\begin{rem} \label {elliptic} Problem \ref{problem2} is a problem slightly more general than Problem \ref{problem1} in the sense that it invloves two indeterminates $u, v$. However, the whole important keys are the normalization condition for smooth compact varieties and the fact that $\chi_{u,v}(\bP^1) = 1 +uv + (uv)^2 + \cdots + (uv)^n$, which automatically implies that $c\ell = T_{-uv}$, as shown in the above proof. In fact, we can say more about $u, v$; in fact either $u= -1$ or $v = -1$, as shown below (see also \cite{Jo}(math.AG/0403305v4, not a published version)). Hence, we can conclude that for Problem \ref {problem2} there is NO such transformation $\sharp: K_0(\Cal {VAR}/-) \to H_*^{BM}(-)\otimes R[u,v]$ with \underline{both intermediates $u$ and $v$ varying}:

For a smooth $X$, suppose that for a certain multiplicative characteristic class $c\ell$
$$\chi_{u,v}(X) = {\pi_X}_*(c\ell(TX) \cap [X]).$$
In particular, let us consider a smooth elliptic curve $E$ and consider any $d$-fold covering 
$$\pi: \widetilde E \to E$$
with $\widetilde E$ being a smooth elliptic curve. Note that 
$$T\widetilde E = \pi^*TE,$$
$$\chi_{u,v}(E) = \chi_{u,v}(\tilde E) = 1 + u + v + uv = (1+u)(1+v).$$
Hence we have 
\begin{align*}
(1+u)(1+v) & = \chi_{u,v}(\widetilde E) \\
& = {\pi_{\widetilde E}}_*(c\ell(T\widetilde E) \cap [\widetilde E]) \\
& = {\pi_{\widetilde E}}_*(c\ell(\pi^*TE) \cap [\widetilde E]) \\
& = {\pi_E}_*\pi_*(c\ell(\pi^*TE) \cap [\widetilde E]) \\
& = {\pi_E}_*(c\ell(TE) \cap \pi_*[\widetilde E]) \\
& = {\pi_E}_*(c\ell(TE) \cap d[ E]) \\
& = d \cdot {\pi_E}_*(c\ell(TE) \cap [ E]) \\
& = d \cdot \chi_{u,v}(E)\\
& = d (1+u)(1+v). 
\end{align*}
Thus we get that $(1+u)(1+v) = d(1+u)(1+v)$. Since $d \not = 0$, we must have that 
$$(1+u)(1+v) = 0, i.e.,  u = -1 \quad \text {or} \quad v = -1.$$
\end{rem} 

\begin{rem} Note that $\chi_{u,v}(X)$ is symmetric in $u$ and $v$, thus both special cases $u=-1$ and $v = -1$ give rise to the same $c\ell = T_y$. It suffices to check it for a compact nonsingular variety $X$. In fact this follows from the Serre duality.
\end{rem}
\begin{rem}
The heart of the mixed Hodge structure is certainly the existence of the weight filtration $W^{\bullet}$ and the Hodge--Deligne polynomial, i.e., the algebraic counting $c_{alg}$, involves the mixed Hodge structure, i.e., both the weight filtration $W^{\bullet}$ and the Hodge filtration $F_{\bullet}$ . However, when one tries to capture $c_{alg}$ \emph{functorially}, then only the Hodge filtration $F_{\bullet}$ gets involved and the weight filtration \emph{does not}, as seen in the Hodge genus $\chi_y$.
\end{rem}

\begin{defn} For a possibly singular variety $X$\\
$$ {T_y}_*(X) := {T_y}_*([X \xrightarrow{id_{X}}  X]) $$
shall be called the \emph {Hirzebruch class of $X$}.\\
\end{defn}

\begin{cor} The degree of the $0$-dimensional component of the Hirzebruch class of a compact complex algebraic variety $X$ is nothing but the Hodge genus:
$$ \chi_y(X) = \int _X {T_y}_*(X).$$\
\end{cor}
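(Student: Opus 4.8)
The plan is to deduce this corollary directly from Theorem~\ref{theorem1} by pushing forward to a point. First I would recall the definition ${T_y}_*(X) = {T_y}_*([X \xrightarrow{\op{id}_X} X]) \in H_*^{BM}(X) \otimes \bQ[y]$, and apply the map $\pi_X : X \to pt$. By the naturality of ${T_y}_*$ with respect to the proper map $\pi_X$ (which is proper since $X$ is compact), the square
$$\CD
K_0(\Cal {VAR}/X) @> {T_y}_* >> H_*^{BM}(X) \otimes \bQ[y] \\
@V {\pi_X}_* VV @VV {\pi_X}_* V \\
K_0(\Cal {VAR}) @>> {T_y}_* > \bQ[y]
\endCD$$
commutes, so that ${\pi_X}_*({T_y}_*(X)) = {T_y}_*({\pi_X}_*[X \xrightarrow{\op{id}_X} X]) = {T_y}_*([X \to pt])$.

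Next I would invoke part (2) of Theorem~\ref{theorem1}, which identifies ${T_y}_*([X \to pt])$ with the Hodge--Deligne specialization $\chi_{y,-1}([X]) = \chi_y(X)$. On the homology side, ${\pi_X}_*$ applied to a Borel--Moore homology class is exactly the operation of taking the degree of its $0$-dimensional component and landing in $H_0^{BM}(pt) \otimes \bQ[y] = \bQ[y]$; this is what the notation $\int_X$ abbreviates. Combining these two identifications gives $\int_X {T_y}_*(X) = \chi_y(X)$, which is the claim.

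The only thing requiring a word of care is the interpretation of $\int_X$: for a possibly singular compact $X$ one takes ${\pi_X}_* : H_*^{BM}(X) \otimes \bQ[y] \to H_*^{BM}(pt) \otimes \bQ[y] = \bQ[y]$, which automatically kills all components of positive degree and records the degree of the dimension-zero part. So there is no genuine obstacle here; the corollary is essentially a restatement of the commutative-square property built into the definition of a natural transformation, specialized to the distinguished element $\Delta_X = [X \xrightarrow{\op{id}_X} X]$ and combined with the normalization/identification in part (2) of the theorem. If one wanted to be fully self-contained one could also note that for $X$ itself smooth the identity ${T_y}_*(X) = T_y(TX) \cap [X]$ reduces the statement to the classical generalized Hirzebruch--Riemann--Roch theorem $\int_X T_y(TX) \cap [X] = \chi_y(X)$ already used in the proof of Theorem~\ref{theorem1}, and the general case follows from that smooth case exactly as in that proof via the surjection $B : \op{Iso}^{prop}(\Cal{SM}/X) \twoheadrightarrow K_0(\Cal{VAR}/X)$.
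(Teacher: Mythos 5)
Your argument is correct and is essentially the paper's own (implicit) reasoning: the corollary is an immediate consequence of applying the naturality of ${T_y}_*$ to the proper map $\pi_X\colon X\to pt$ on the class $[X \xrightarrow{\op{id}_X} X]$ and then invoking part (2) of Theorem \ref{theorem1}, i.e.\ ${T_y}_*([X\to pt])=\chi_{y,-1}([X])=\chi_y(X)$. Your closing remark reducing to the smooth case via gHRR and the surjection $B$ is exactly the mechanism already used in the paper's proof of that theorem, so nothing further is needed.
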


This is another singular analogue of the above Generalized Hirzebruch--Riemann--Roch Theorem ``$\chi_y = T_y$", which is a generalization of the famous Hirzeburch's Riemman--Roch Theorem (which was further generalized to the Grothendieck--Riemann--Roch Theorem) 
$$\text {Hirzebruch--Riemann--Roch:} \quad  p_a(X) = \int _X td(TX) \cap [X]$$
with $p_a(X)$ the arithmetic genus and $td(V)$ the original Todd class.
Noticing the above specializations of $\chi_y$ and $T_y(V)$, this gHRR is a unification of the following three well-known theorems:\\

\hspace {1cm} $y = -1$: \quad The Gauss--Bonnet Theorem (or Poincar\'e --Hopf Theorem) :
$$\chi(X) = \int_X c(X) \cap [X]$$

\hspace {1cm} $y = 0$: \quad The Hirzebruch--Riemann--Roch : 
$$p_a(X)= \int_X td(X) \cap [X]$$

\hspace {1cm} $y = 1$: \quad The Hirzebruch's Signature Theorem: 
$$\sigma(X) = \int_X L(X) \cap [X].$$\

\section {Proofs of the existence of the motivic characteristic class ${T_y}_*$} 

Our motivic characteristic class transformation

$${T_y} _*: K_0(\Cal {VAR}/X) \to H_*^{BM}(X)\otimes \bQ[y] $$
is obtained as the composite 
$${T_y} _* = \widetilde {td_{*(y)}^{BFM}} \circ \Lambda _y^{mot}$$
of the following natural transformations:

$$\Lambda _y^{mot}: K_0(\Cal {VAR}/X) \to G_0(X) \otimes \bZ[y] \qquad \qquad $$

and 
$$\widetilde {td_{*(y)}^{BFM}}: G_0(X) \otimes \bZ[y] \to H_*^{BM}(X) \otimes \bQ[y, (1+y)^{-1}] .$$

Here, in order to describe $\widetilde {td_{*(y)}^{BFM}}$,  we need to recall the following Baum--Fulton--MacPherson's Riemann--Roch or Todd class for singular varieties \cite{BFM}:

\begin{thm}
There exists a unique natural transformation
$$td_*^{BFM}:G_0(-) \to H_*^{BM}(-)\otimes \bQ$$
such that for a smooth $X$
$$td_*^{BFM}(\Cal O_X) = td(TX) \cap [X].$$
Here $G_0(X)$ is the Grothendieck group of coherent sheaves on $X$, which is a covariant functor with the pushforward $f_*:G_0(X) \to G_0(Y)$ for a proper morhphism $f:X \to Y$ defined by
$$f_!(\Cal F) =  \sum_j (-1)^j [R^j f_*\Cal F].$$ 
\end{thm}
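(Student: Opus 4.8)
The plan is to establish the Baum--Fulton--MacPherson Riemann--Roch transformation $td_*^{BFM}$ by reducing the existence problem to K-theory on a smooth ambient space and then transporting it to $G_0$ of singular varieties via a suitable resolution/embedding argument. First I would note that every variety $X$ admits a closed embedding $X \hookrightarrow M$ into a smooth quasi-projective variety $M$ (locally, and then globally using a compactification plus resolution); on the smooth $M$ one has the Grothendieck--Riemann--Roch theorem relating the K-theory Chern character and the Todd class of $TM$, so one can define the RR map on $X$ by taking a coherent sheaf $\mathcal F$ on $X$, viewing its class in $G_0(M)$ (which agrees with $K^0(M)$ since $M$ is smooth), applying the topological Chern character composed with $\cap\, td(TM)\cap [M]$, and checking the image lands in $H_*^{BM}(X)\otimes\bQ$ because the homological class is supported on the support of $\mathcal F$. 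The key technical points are: (a) independence of the choice of embedding $M$, handled by comparing two embeddings through their product $M\times M'$ and using functoriality of GRR for the smooth closed immersions $M\hookrightarrow M\times M'$; (b) naturality with respect to proper pushforward $f_*:G_0(X)\to G_0(Y)$, $f_!(\mathcal F)=\sum_j(-1)^j[R^jf_*\mathcal F]$, which follows from GRR applied to a proper map of smooth ambient spaces into which $f$ can be factored; and (c) the normalization $td_*^{BFM}(\mathcal O_X)=td(TX)\cap[X]$ for smooth $X$, which is just classical GRR for the identity (or for $X\hookrightarrow M$ together with the adjunction exact sequence for the normal bundle).

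The next step is uniqueness. Here I would invoke the same two surjections used in the proof of Theorem~\ref{theorem1}: any class in $G_0(X)$ can be written, after resolution of singularities and dévissage, in terms of pushforwards $h_*[\mathcal O_V]$ with $h:V\to X$ proper and $V$ smooth (filter an arbitrary coherent sheaf by subsheaves whose successive quotients are pushed forward from smooth subvarieties, using Noetherian induction on $\op{Supp}\mathcal F$). Naturality forces $td_*^{BFM}(h_*[\mathcal O_V])=h_*\bigl(td(TV)\cap[V]\bigr)$, so the transformation is determined on a generating set, hence everywhere; this is exactly the same mechanism $Iso^{prop}(\Cal{SM}/X)\twoheadrightarrow G_0(X)$ as for $B$ in the excerpt.

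I expect the main obstacle to be (a), the well-definedness and embedding-independence together with verifying that the output class genuinely lives in Borel--Moore homology of $X$ rather than of the ambient $M$ — this requires the refined (localized) Riemann--Roch / bivariant machinery of \cite{BFM}, i.e.\ the construction of $\tau$ on the category of $X$-schemes, and is the heart of the original paper. The resolution-of-singularities dévissage in the uniqueness part is routine given Hironaka, and naturality for proper maps is a formal consequence of GRR once the ambient constructions are in place; but pinning down that $td_*^{BFM}$ is a natural transformation of functors (not just a collection of homomorphisms) and that it is compatible with exact sequences forces one to work in the bivariant/relative setting, which is where all the real work lies. For the purposes of this survey I would simply cite \cite{BFM} for this existence and concentrate the exposition on how the normalization pins down the Todd class, exactly parallel to the Hirzebruch-class discussion above.
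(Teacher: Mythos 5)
The paper gives no proof of this statement at all---it is recalled verbatim from Baum--Fulton--MacPherson \cite{BFM} as a known theorem---so your decision to cite \cite{BFM} for the existence is exactly what the paper does, and your outline is a fair pr\'ecis of the original construction (closed embedding into a smooth $M$, Chern character plus $td(TM)$, independence of the embedding, GRR for naturality, and d\'evissage plus resolution of singularities to see that $G_0(X)$ is generated by classes $h_*[\mathcal{O}_V]$ with $V$ smooth and $h$ proper, which gives uniqueness just as in the paper's argument for ${T_y}_*$). The one point to keep straight is that pushing $[\mathcal{F}]$ into $G_0(M)\cong K^0(M)$ and capping with $td(TM)\cap[M]$ only produces a class in $H_*^{BM}(M)$, and there is no a posteriori ``checking'' that it lies in $H_*^{BM}(X)$; one must construct a localized class $ch_X^M(\mathcal{F})\in H_*^{BM}(X)$ (K-theory with supports, i.e.\ the relative/bivariant machinery) from the outset, which you correctly flag as the heart of \cite{BFM} rather than a routine verification.
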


Let us set 
$$td_*^{BFM}(X) := td_*^{BFM}(\Cal O_X),$$
which shall be called the Baum--Fulton--MacPherson Todd class of $X$. Then we have
$$p_a(X) = \chi(X, \Cal O_X) = \int_X td_*^{BFM}(X)\quad \text {(HRR-type theorem)}.$$

Let 
$$td_{*i}^{BFM}: G_0(X)  \xrightarrow {td_*^{BFM}}  H_*^{BM}(X) \otimes \bQ \xrightarrow {projection}  H_{2i}^{BM}(X) \otimes \bQ$$
be the $i$-th (i.e., $2i$-dimensional) component of $td_*^{BFM}$. Then the above \emph{twisted BFM-Todd class transformation} or \emph {twisted BFM-RR transformation} (cf. \cite{Y3})

$$\widetilde {td_{*(y)}^{BFM}}: G_0(X) \otimes \bZ[y] \to H_*^{BM}(X) \otimes \bQ[y, (1+y)^{-1}]$$
is defined by
$$\widetilde {td_{*(y)}^{BFM}}: = \sum_{i\geq 0} \frac{1}{(1+y)^i} td_{*i}^{BFM}.$$

$\Lambda _y^{mot}: K_0(\Cal {VAR}/X) \to G_0(X) \otimes \bZ[y]$  is the main key and in our paper \cite{BSY1} it is denoted by $mC_*$ and called the \emph{motivic Chern class}. In this paper, we use the above symbol to emphasize the property of it:

\begin{thm} \label{lambda}(``motivic" $\lambda_y$-class transformation)
There exists a unique natural transformation
$$\Lambda _y^{mot}: K_0(\Cal {VAR}/X) \to G_0(X) \otimes \bZ[y]$$
satisfying the normalization condition that for smooth $X$
$$\Lambda _y^{mot}([X \xrightarrow {\op {id}} X]) = \sum_{p = 0}^{dim X} [\Omega_X^p]y^p = \lambda_y(T^*X) \otimes [\Cal O_X].$$
Here $\displaystyle \lambda _y(T^*X) = \sum_{p= 0}^{dim X} [\Lambda ^p (T^*X)]y^p$ and $\otimes [\Cal O_X] : K^0(X) \cong G_0(X)$ is the isomorphism for smooth $X$, i.e., taking the sheaf of local sections.
\end{thm}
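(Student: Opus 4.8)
The plan is to prove uniqueness by reducing to smooth varieties via resolution of singularities (exactly as in the proof of Theorem~\ref{theorem1}), and then to prove existence by exhibiting $\Lambda_y^{mot}$ on a convenient set of generators and checking it respects the relations, for which I would invoke Bittner's blow-up presentation of the relative Grothendieck group.

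\emph{Uniqueness.} By the surjection $B: Iso^{prop}(\Cal{SM}/X) \twoheadrightarrow K_0(\Cal{VAR}/X)$ of the proof of Theorem~\ref{theorem1} (coming from Nagata compactification and Hironaka resolution), $K_0(\Cal{VAR}/X)$ is generated by classes $[V \xrightarrow{h} X]$ with $V$ smooth and $h$ proper. For such a class, naturality of $\Lambda_y^{mot}$ with respect to the proper map $h$ together with $h_*[V \xrightarrow{\op{id}} V] = [V \xrightarrow{h} X]$ forces
$$\Lambda_y^{mot}([V \xrightarrow{h} X]) = h_*\Bigl(\Lambda_y^{mot}([V \xrightarrow{\op{id}} V])\Bigr) = h_*\Bigl(\sum_{p\ge 0}[\Omega_V^p]\,y^p\Bigr) = \sum_{p,j\ge 0}(-1)^j\,[R^jh_*\Omega_V^p]\,y^p ,$$
the second equality being the normalization condition. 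Thus $\Lambda_y^{mot}$ is determined on generators, hence unique.

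\emph{Existence: the set-up.} I would \emph{define} an assignment on $Iso^{prop}(\Cal{SM}/X)$ by the formula just derived, $\Phi([V \xrightarrow{h} X]) := h_*\bigl(\sum_{p}[\Omega_V^p]y^p\bigr)$, and show it descends to $K_0(\Cal{VAR}/X)$. By F.~Bittner's theorem (a consequence of the weak factorization theorem), $K_0(\Cal{VAR}/X)$ is the quotient of the free abelian group on such classes $[V \xrightarrow{h} X]$ ($V$ smooth, $h$ proper) modulo $[\emptyset \to X] = 0$ and the \emph{blow-up relations}
$$[\widetilde V \to X] - [E \to X] = [V \to X] - [W \to X],$$
for every closed smooth $W \subset V$, where $\widetilde V = \op{Bl}_W V$, $E = \bP(N_{W/V})$ is its exceptional divisor, and all four varieties map properly to $X$ via the given morphism $V \to X$. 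The relation $[\emptyset]=0$ is clear for $\Phi$, so by functoriality of proper pushforward it suffices to prove, in $G_0(V) \otimes \bZ[y]$, the \emph{blow-up formula}
$$\tau_*\Bigl(\sum_p[\Omega^p_{\widetilde V}]\,y^p\Bigr) - k_*q_*\Bigl(\sum_p[\Omega^p_E]\,y^p\Bigr) = \sum_p[\Omega^p_V]\,y^p - k_*\Bigl(\sum_p[\Omega^p_W]\,y^p\Bigr),$$
where $\tau : \widetilde V \to V$ is the blow-down, $q : E \to W$ the projective-bundle projection, and $k : W \hookrightarrow V$; here $\widetilde V, E, W$ are all smooth, so every sheaf in sight is locally free and $G_0(V) = K^0(V)$.

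\emph{Existence: the blow-up formula, and where the work is.} This identity is the technical heart. I would establish it by combining: (a) for the $E$-term, the relative Euler sequence of $q : E = \bP(N_{W/V}) \to W$, which expresses $\sum_p[\Omega^p_E]y^p$ as $q^*\bigl(\sum_p[\Omega^p_W]y^p\bigr)$ times $\lambda_y$ of the relative cotangent bundle $\Omega^1_{E/W}$, so that the projection formula reduces $q_*(\cdots)$ to a computation on $W$; (b) for the $\widetilde V$-term, the conormal exact sequence $0 \to \tau^*\Omega^1_V \to \Omega^1_{\widetilde V} \to \Omega^1_{\widetilde V/V} \to 0$ (with $\Omega^1_{\widetilde V/V}$ torsion supported on $E$) together with the classical computation of $R\tau_*\Omega^p_{\widetilde V}$ — equivalently, the blow-up invariance of the Hodge--Deligne data established by Guillén--Navarro Aznar; and (c) matching the $E$-supported contributions against the two $W$-terms, keeping careful track of the tautological twist $\Cal O_E(1)$ and of the signs from $\sum_j (-1)^j$. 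An alternative, cleaner route to existence is Saito's theory of mixed Hodge modules: the functor $X \mapsto K_0(\op{MHM}(X))$ carries a natural transformation to $G_0(X)$ built from $\bigoplus_p \op{gr}^F_{-p}\op{DR}(-)$, and composing with $K_0(\Cal{VAR}/X) \to K_0(\op{MHM}(X))$, $[V \xrightarrow{h} X] \mapsto [h_*\bQ^H_V]$, yields a transformation whose naturality is automatic, leaving only the normalization on smooth $X$ to check. Either way, I expect the genuine obstacle to be the hands-on verification of the blow-up formula in (b)--(c): pushing the torsion sheaves along $E$ and the $\Cal O_E(1)$-twists correctly through the push-forwards is where care is needed, whereas the uniqueness argument, the reduction via Bittner, and the projective-bundle computation (a) are routine.
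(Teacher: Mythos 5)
Your proposal is correct and follows essentially the same route as the paper: its third proof of this theorem uses exactly your reduction via Bittner's blow-up presentation of $K_0(\Cal{VAR}/X)$ together with the blow-up relation $q_*(\Omega^p_{Bl_YX'})-i_*q'_*(\Omega^p_E)=\Omega^p_{X'}-i_*(\Omega^p_Y)$ in $G_0$, which the paper (like you) does not verify by hand but quotes from Gros or Guill\'en--Navarro Aznar, and your uniqueness argument via resolution of singularities and naturality is the paper's as well. Your ``alternative, cleaner route'' through Saito's mixed Hodge modules is precisely the paper's first proof, so nothing essential is missing.
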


\begin{thm} \label {Ty*} The natural transformation 
$${T_y} _* := \widetilde {td_{*(y)}^{BFM}} \circ \Lambda _y^{mot}:  K_0(\Cal {VAR}/X)  \to H_*^{BM}(X) \otimes \bQ[y] \subset  H_*(X) \otimes \bQ[y, (1+y)^{-1}] $$
satisfies the normalization condition that for smooth $X$
$${T_y} _* ([X \xrightarrow {\op {id}} X]) = T_y(TX) \cap [X].$$
Hence such a natural transformation  is unique.
\end{thm}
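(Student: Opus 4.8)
The plan is to verify the normalization condition for $\,{T_y}_* = \widetilde{td_{*(y)}^{BFM}} \circ \Lambda_y^{mot}\,$ on smooth varieties by chasing the definitions through the two composed transformations, and then to invoke the uniqueness mechanism already established in the proof of Theorem~\ref{theorem1}(1). First I would take a nonsingular variety $X$ and evaluate $\Lambda_y^{mot}$ on the distinguished generator: by Theorem~\ref{lambda} the normalization condition gives
$$\Lambda_y^{mot}([X \xrightarrow{\op{id}} X]) = \sum_{p=0}^{\dim X} [\Omega_X^p] y^p = \lambda_y(T^*X) \otimes [\Cal O_X] \in G_0(X) \otimes \bZ[y].$$
The task then reduces to computing $\widetilde{td_{*(y)}^{BFM}}$ on this element and checking that the result equals $T_y(TX) \cap [X]$.

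Next I would unwind $\widetilde{td_{*(y)}^{BFM}} = \sum_{i \geq 0} (1+y)^{-i}\, td_{*i}^{BFM}$ applied to $\lambda_y(T^*X) \otimes [\Cal O_X]$. Since $X$ is smooth, each $[\Omega_X^p] = [\Lambda^p T^*X \otimes \Cal O_X]$ is the class of a locally free sheaf, so by the module property of $td_*^{BFM}$ over $K^0(X)$ (the standard refinement of the Baum--Fulton--MacPherson theorem, which gives $td_*^{BFM}(E \otimes \Cal O_X) = \op{ch}(E) \cap td(TX) \cap [X]$ for a vector bundle $E$) one gets
$$td_*^{BFM}(\lambda_y(T^*X) \otimes [\Cal O_X]) = \op{ch}\Bigl(\sum_{p} \Lambda^p T^*X \, y^p\Bigr) \cap td(TX) \cap [X] = \op{ch}(\lambda_y(T^*X)) \cap td(TX) \cap [X].$$
Now I would expand in Chern roots: if $\alpha_1,\dots,\alpha_n$ are the Chern roots of $TX$, then $\op{ch}(\lambda_y(T^*X)) = \prod_i (1 + y e^{-\alpha_i})$ and $td(TX) = \prod_i \alpha_i/(1-e^{-\alpha_i})$, so the degree-$2i$ component picks up a factor that, after dividing by $(1+y)^i$ as dictated by $\widetilde{td_{*(y)}^{BFM}}$, must reassemble into $\prod_i \bigl(\tfrac{\alpha_i(1+y)}{1-e^{-\alpha_i(1+y)}} - \alpha_i y\bigr)$. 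The clean way to see this is Hirzebruch's classical identity: the generating power series $\frac{x(1+y)}{1-e^{-x(1+y)}} - xy$ is obtained from $\frac{1+ye^{-x}}{1-e^{-x}}\cdot x$ by the substitution rescaling $x \mapsto x$ together with the $(1+y)^{-i}$ twist on the $i$-th graded piece; I would spell this out as the formal identity that the normalized power series of $T_y$ equals $\frac{Q(x)(1+ye^{-x})}{\,\cdot\,}$ assembled from the Todd series $Q(x) = x/(1-e^{-x})$, exactly the computation underlying the original definition of the Hirzebruch $\chi_y$-class. This yields $\widetilde{td_{*(y)}^{BFM}}(\Lambda_y^{mot}([X \xrightarrow{\op{id}} X])) = T_y(TX) \cap [X]$, which is the desired normalization.

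Finally, for the uniqueness claim: once $\,{T_y}_*$ is a natural transformation $K_0(\Cal{VAR}/-) \to H_*^{BM}(-) \otimes \bQ[y]$ satisfying the smooth normalization, uniqueness is immediate from the argument already recorded in the proof of Theorem~\ref{theorem1}(1) --- namely the surjections $A: Iso^{prop}(\Cal{VAR}/X) \twoheadrightarrow K_0(\Cal{VAR}/X)$ and $B: Iso^{prop}(\Cal{SM}/X) \twoheadrightarrow K_0(\Cal{VAR}/X)$ coming from Nagata compactification and Hironaka resolution show that $K_0(\Cal{VAR}/X)$ is generated by classes $[V \xrightarrow{h} X]$ with $V$ smooth and $h$ proper; on such a generator naturality forces ${T_y}_*([V \xrightarrow{h} X]) = h_*({T_y}_*([V \xrightarrow{\op{id}} V])) = h_*(T_y(TV) \cap [V])$, so the value is determined. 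The main obstacle is the middle paragraph: one must be careful that the module/projection-formula property of $td_*^{BFM}$ over $K^0(X)$ interacts correctly with the graded twisting $\sum_i (1+y)^{-i} td_{*i}^{BFM}$, since the twist acts on homological degree while the Chern character mixes degrees --- getting the bookkeeping of the $(1+y)^{-i}$ factors right so that the product formula for $T_y$ emerges is the one genuinely computational point, and it is exactly where Hirzebruch's original multiplicative-sequence formalism has to be invoked rather than improvised.
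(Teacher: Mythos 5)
Your proposal is correct and follows essentially the same route as the paper's own direct (``nonslick'') proof: evaluate $\Lambda_y^{mot}$ on the identity class via the normalization of Theorem \ref{lambda}, use the $K^0(X)$-module property of $td_*^{BFM}$ to reduce to the class $ch(\lambda_y(T^*X))\cap td(TX)\cap [X]$ twisted by $(1+y)^{-i}$ in homological degree $2i$, identify the result with $T_y(TX)\cap [X]$ by the $(1+y)$-rescaling of the Chern roots, and get uniqueness from the generation of $K_0(\Cal{VAR}/X)$ by proper morphisms from smooth varieties together with naturality, exactly as in the proof of Theorem \ref{theorem1}. The one step you leave as an appeal to ``Hirzebruch's classical identity'' is precisely the short computation the paper writes out: on the cohomological degree $\dim X - i$ component the factor $(1+y)^{-i}=(1+y)^{\dim X - i}\cdot (1+y)^{-\dim X}$ acts by substituting $\alpha_j\mapsto (1+y)\alpha_j$ while dividing each Chern-root factor by $1+y$, and the identity $\frac{1+ye^{-\alpha_j(1+y)}}{1+y}\cdot \frac{\alpha_j(1+y)}{1-e^{-\alpha_j(1+y)}}=\frac{\alpha_j(1+y)}{1-e^{-\alpha_j(1+y)}}-\alpha_j y$ then yields exactly the $T_y$-factors.
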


\begin{rem} Before giving a quick proof of Theorem \ref{Ty*}, to avoid some possible question on the image of ${T_y}_*$ in Theorem \ref{Ty*} , it would be better to make a remark here. Even though the target of 
$$\displaystyle \widetilde {td_{*(y)}^{BFM}}: G_0(X) \otimes \bZ[y] \to H_*(X) \otimes \bQ[y, (1+y)^{-1}]$$ is 
$H_*^{BM}(X) \otimes \underline {\bQ[y, (1+y)^{-1}]}$, 
the image of ${T_y}_* = \widetilde {td_{*(y)}^{BFM}} \circ \Lambda _y^{mot}$ is in $H_*(X) \otimes \underline {\bQ[y]}$.  As mentioned before, it is because by Hironaka's resolution of singularities, induction on dimension, the normalization condition,  and the naturality of ${T_y}_*$, $K_0(\Cal {VAR}/X)$  is generated by $[V \xrightarrow {h} X]$ with $h$ being proper and $V$ being smooth. Hence
$${T_y}_*([V \xrightarrow {h} X]) = {T_y}_*(h_*[V \xrightarrow {\op {id}_V} V] ) = h_*({T_y}_*([V \xrightarrow {\op {id}_V} V] ) \in H_*^{BM}(X) \otimes \bQ[y] .$$
\end{rem}

\begin{proof} There is a slick way of proving this as in our paper \cite{BSY1} . Here we give a direct nonslick one. Let $X$ be smooth.
\begin {align*}
\hspace {2.5cm}& \widetilde {td_{*(y)}^{BFM}} \circ \Lambda _y^{mot}([X \xrightarrow {\op {id}} X]) \\
&= \widetilde {td_{*(y)}^{BFM}} (\lambda_y(\Omega_X)) \\
&= \sum_{i \geq 0}\frac{1}{(1+y)^i} td_{*i}^{BFM} (\lambda_y(\Omega_X)) \\
& = \sum_{i \geq 0} \frac{1}{(1+y)^i} \left (td_*^{BFM} (\lambda_y(\Omega_X))\right )_i \\
& = \sum_{i \geq 0} \frac{1}{(1+y)^i} \left (td_*^{BFM} (\lambda_y(T^*X) \otimes [\Cal O_X])\right )_i \\
& = \sum_{i \geq 0}\frac{1}{(1+y)^i} \left (ch(\lambda_y(T^*X)) \cap  td_*^{BFM}(\Cal O_X)\right )_i \\
& = \sum_{i \geq 0} \frac{1}{(1+y)^i} \left (ch(\lambda_y(T^*X)) \cap  (td(TX) \cap [X])\right )_i \\
& = \sum_{i \geq 0}\frac{1}{(1+y)^i} \left (\prod_{j=1}^{\op {dim}X} (1+ye^{-\alpha _j}) \prod_{j=1}^{\op {dim}X} \frac {\alpha_j}{1 - e^{-\alpha_j}} \right)_{\op {dim}X - i}  \cap [X].
\end{align*}
Furthermore we can see the following:
\begin {align*}
 \frac{1}{(1+y)^i} & \left (\prod_{j=1}^{\op {dim}X} (1+ye^{-\alpha _j}) \prod_{j=1}^{\op {dim}X} \frac {\alpha_j}{1 - e^{-\alpha_j}} \right)_{\op {dim}X - i}  \\
& = \frac{(1+y)^{\op {dim}X}} {(1+y)^i\quad \quad  } \left (\prod_{j=1}^{\op {dim}X} \frac{1+ye^{-\alpha _j}}{1 + y} \prod_{j=1}^{\op {dim}X} \frac {\alpha_j}{1 - e^{-\alpha_j}} \right)_{\op {dim}X - i} \\
& = (1+y)^{\op {dim}X - i} \left (\prod_{j=1}^{\op {dim}X} \frac{1+ye^{-\alpha _j}}{1 + y} \prod_{j=1}^{\op {dim}X} \frac {\alpha_j}{1 - e^{-\alpha_j}} \right)_{\op {dim}X - i} \\
& = \left (\prod_{j=1}^{\op {dim}X} \frac{1+ye^{-\alpha _j}}{1 + y} \prod_{j=1}^{\op {dim}X} \frac {\alpha_j (1+y)}{1 - e^{-\alpha_j(1+y)}} \right)_{\op {dim}X - i} \\
& = \left (\prod_{j=1}^{\op {dim}X} \frac{1+ye^{-\alpha _j}}{1 + y} \cdot \frac {\alpha_j (1+y)}{1 - e^{-\alpha_j(1+y)}} \right)_{\op {dim}X - i}  \\
& = \left (\prod_{j=1}^{\op {dim}X} \frac{\alpha_j(1 +y)}{1 - e^{-\alpha_j(1 +y)}} - \alpha_jy \right)_{\op {dim}X - i} \\
& = \left (T_y(TX) \right)_{\op {dim}X - i} 
\end{align*}
Therefore we get that $\widetilde {td_{*(y)}^{BFM}} \circ \Lambda _y^{mot}([X \xrightarrow {\op {id}} X])  =  T_y(TX) \cap [X].$
\end{proof}

Thus it remains to show Theorem \ref{lambda} and there are at least three proofs and each has its own advantage.\\

\noindent
{\bf[PROOF 1]:}
\underline {By Saito's Theory of Mixed Hodge Modules} \cite{Sai1, Sai2, Sai3, Sai4, Sai5, Sai6}: 

Even though Saito's theory is very complicated, this approach turns out to be useful and for example has been used in recent works of Cappell, Libgober, Maxim,  Sch\"urmann and Shaneson \cite{CLMS1, CLMS2, CMS1, CMS2, CMSS, MS, MS2}, related to intersection (co)homology. Here we recall only the ingredients which we need to define $\Lambda _y^{mot}$: 

\begin{enumerate}
\item [\underline {MHM1}]: To $X$ one can associate an abelian category
of {\em mixed Hodge modules\/} $MHM(X)$, together with a functorial
pullback $f^{*}$ and pushforward $f_{!}$ on the level of bounded derived categories
$D^{b}(MHM(X))$ for any (not necessarily proper) map.
These natural transformations are functors of triangulated categories.\\

\item [\underline {MHM2}]: Let $i:Y\to X$ be the inclusion of a closed subspace, 
with open complement $j: U:=X\backslash Y \to X$.
Then one has for $M\in D^{b}MHM(X)$ a distinguished triangle 
\[j_{!}j^{*}M \to M \to i_{!}i^{*}M \stackrel{[1]}{\to} \:.\]\

\item [\underline {MHM3}]: For all $p\in \bZ$ one has a ``filtered De Rham complex" functor of triangulated
categories
$$gr^{F}_{p}DR: D^{b}(MHM(X) )\to D^{b}_{coh}(X)$$
commuting with proper pushforward. Here $D^{b}_{coh}(X)$ is
the bounded derived category of sheaves of ${\Cal O}_{X}$-modules 
with coherent cohomology sheaves. Moreover, $gr^{F}_{p}DR(M)=0$
for almost all $p$ and $M\in D^{b}MHM(X)$ fixed.\\

\item [\underline {MHM4}]:
There is a distinguished element $\bQ_{pt}^{H}
\in MHM(pt)$ such that 
\[gr^{F}_{-p}DR(\bQ_{X}^{H}) \simeq \Omega^p_X[-p]
\in D^{b}_{coh}(X)\]
for $X$ smooth and pure dimensional.
Here $\bQ_{X}^{H}:=\pi_X^{*}\bQ_{pt}^{H}$ for $\pi_X: X\to pt$
a constant map, with $\bQ_{pt}^{H}$ viewed as a complex
concentrated in degree zero.\\
\end{enumerate}

The above transformations are functors of triangulated
categories, thus they induce functors even on the level of {\em Grothendieck groups 
of triangulated categories}, which we denote by the same name. Note that for these {\em Grothendieck groups\/} we have isomorphisms
\[K_{0}(D^{b}MHM(X)) \simeq K_{0}(MHM(X)) \quad \text{and}
\quad K_{0}(D^{b}_{coh}(X)) \simeq G_{0}(X)\]
by associating to a complex its alternating sum of cohomology objects.

Now we are ready to define the following two transformations $mH$ and $gr^{F}_{-*}DR$:

$$mH: K_{0}(\Cal {VAR}/X) \to K_{0}(MHM(X))\quad \text {defined  by} \quad 
mH([V \xrightarrow {f} X]) :=  [f_{!} \bQ_{V}^{H}].$$

In a sense $K_{0}(MHM(X))$ is like the abelian group of 
``mixed-Hodge-module constructible functions" with the class of $\bQ_{X}^{H}$
as a ``constant function'' on $X$. The well-definedness of $mH$, i.e., the additivity relation follows from the above properety (MHM2). By (MHM3) we get the following homomorphism commuting with proper pushforward

$$gr^{F}_{-*}DR: K_{0}(MHM(X)) \to G_{0}(X)\otimes \bZ[y,y^{-1}] $$
defined  by
$$gr^{F}_{-*}DR([M]) := \sum_{p} \: [gr^{F}_{-p}DR(M)]\cdot (-y)^{p} $$
Then we define our $\Lambda _y^{mot}$ by the composite of these two natural transformations:
$$\Lambda _y^{mot} := gr^F_{-*}DR \circ mH : K_0(\Cal {VAR}/X) \xrightarrow {mH} K_0(MHM(X)) \xrightarrow {gr^F_{-*}DR} G_0(X) \otimes \bZ[y].$$

By (MHM4), for $X$ smooth and pure dimensional we have that 
\[ gr^{F}_{-*}DR\circ mH([id_{X}]) =  
\sum_{p = 0}^{dim X} \; [\Omega_X^p]\cdot y^{p}
\in G_{0}(X)\otimes \bZ[y] \:.\]
\\
Thus we get the unique existence of the ``motivic" $\lambda_y$-class transformation
$\Lambda _y^{mot}$. \\

\noindent
 {\bf[PROOF 2]:} \underline {By the filtered Du Bois complexes} \cite{DB}: Recall that we have the following surjective homomophism
$$ A: Iso^{prop}(\Cal {VAR}/X) \twoheadrightarrow K_0(\Cal {VAR}/X) .$$
We can describe $ker A$ as follows:

\begin{thm} \label{lem:iso1}
$K_{0}(\Cal {VAR}/X)$ is isomorphic to the quotient of $Iso^{pro}(\Cal {VAR}/X)$  modulo the
``acyclicity'' relation
\begin{equation} \label{eq:ac}
[\emptyset \to X]=0 \quad \text{and} \quad
[\tilde{X}'\to X] - [\tilde{Z}'\to X]= [X'\to X] - [Z'\to X] \:,\tag{ac}
\end{equation}
for any cartesian diagram
\begin{displaymath} \begin{CD}
\tilde{Z}' @>>> \tilde{X}' \\
@VVV  @VV q V \\
Z' @> i >> X' @>>> X \:,
\end{CD} \end{displaymath}
with $q$ proper, $i$ a closed embedding,  and $q: \tilde{X}'\backslash \tilde{Z}'
\to X' \backslash Z'$ an isomorphism. 
\end{thm}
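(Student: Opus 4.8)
The plan is to exhibit the quotient of $Iso^{prop}(\Cal{VAR}/X)$ by the acyclicity relation (ac) as a group satisfying the same universal property as $K_0(\Cal{VAR}/X)$, so that the two are canonically identified. Concretely, I would show two things: (i) the surjection $A: Iso^{prop}(\Cal{VAR}/X) \twoheadrightarrow K_0(\Cal{VAR}/X)$ kills the relations (ac), hence factors through the quotient $Q(X)$ of $Iso^{prop}(\Cal{VAR}/X)$ by (ac), giving a surjection $\bar A: Q(X) \to K_0(\Cal{VAR}/X)$; and (ii) construct an inverse by defining a homomorphism $K_0(\Cal{VAR}/X) \to Q(X)$ that sends the class of an arbitrary (not necessarily proper) morphism $[V \xrightarrow{h} X]$ to a suitable class built from a compactification, and check it is well defined on the additivity relations defining $K_0(\Cal{VAR}/X)$.

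First I would verify (i). Given a cartesian square as in the statement, with $q$ proper, $i$ a closed embedding, and $q$ restricting to an isomorphism over $X'\setminus Z'$, the scissor relation in $K_0(\Cal{VAR}/X)$ gives $[\tilde X' \to X] = [\tilde X'\setminus \tilde Z' \to X] + [\tilde Z' \to X]$ and $[X' \to X] = [X'\setminus Z'\to X] + [Z'\to X]$; since $\tilde X'\setminus\tilde Z' \cong X'\setminus Z'$ over $X$, subtracting yields exactly $[\tilde X'\to X] - [\tilde Z'\to X] = [X'\to X] - [Z'\to X]$, and $[\emptyset\to X]=0$ trivially. So $A$ factors as claimed.

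For (ii), the key point is to define a map $\Phi: Iso(\Cal{VAR}/X) \to Q(X)$ on \emph{all} morphisms and show it descends to $K_0(\Cal{VAR}/X)$. For $h: V\to X$ choose a compactification, i.e.\ a proper $\bar h:\bar V\to X$ with an open dense embedding $V\hookrightarrow \bar V$ over $X$ (using Nagata compactification, exactly as invoked for the surjectivity of $A$ in Theorem~\ref{theorem1}), and set $\Phi([V\xrightarrow{h}X]) := [\bar V\to X] - [\bar V\setminus V \to X]$, where the second term is defined recursively by the same procedure on the lower-dimensional variety $\bar V\setminus V$ (induction on dimension, base case $\emptyset$). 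The heart of the argument — and the step I expect to be the main obstacle — is showing this is independent of all choices and additive: independence of the compactification reduces, via dominating any two compactifications by a third and resolving, to precisely an instance of the (ac) relation applied to the cartesian square comparing the two compactifications along the locus where they differ; additivity over a closed $Z\subset V$ requires compatibly choosing compactifications of $Z$, $V\setminus Z$, and $V$ and again bookkeeping the boundary strata, once more landing on (ac). Granting well-definedness, $\Phi$ induces $\bar\Phi: K_0(\Cal{VAR}/X)\to Q(X)$, and one checks $\bar\Phi\circ\bar A = \op{id}$ and $\bar A\circ\bar\Phi = \op{id}$ on the generating classes (for a proper $h$ one may take $\bar V = V$, so $\bar\Phi([V\xrightarrow{h}X]) = [V\to X]$), completing the isomorphism. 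I would also note that all constructions are visibly compatible with proper pushforward in $X$, though that is not needed for the bare statement.
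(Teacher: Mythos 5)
Your proposal is correct, and it is essentially the standard argument for this presentation; note that the survey itself states this theorem without proof (it is taken from \cite{BSY1}), so there is no competing argument in the text — your route coincides with the one used there. Two points in your outline can be streamlined. First, no recursion or induction on dimension is needed to define $\Phi$: since $\bar{V}\to X$ is proper and $\bar{V}\setminus V$ is closed in $\bar{V}$, the boundary $\bar{V}\setminus V\to X$ is itself proper, so $[\bar{V}\to X]-[\bar{V}\setminus V\to X]$ is already an element of the quotient $Q(X)$. Second, no resolution is needed to compare two compactifications $\bar{V}_1,\bar{V}_2$ of $V$ over $X$: let $\bar{V}_3$ be the closure of the diagonal copy of $V$ in $\bar{V}_1\times_X\bar{V}_2$; the projections $q_i\colon \bar{V}_3\to\bar{V}_i$ are proper and restrict to isomorphisms over $V$ (the graph of $V\to\bar{V}_2$ is closed in $V\times_X\bar{V}_2$ by separatedness), so applying (ac) to each $q_i$ with $Z'=\bar{V}_i\setminus V$ gives $[\bar{V}_i\to X]-[\bar{V}_i\setminus V\to X]=[\bar{V}_3\to X]-[\bar{V}_3\setminus V\to X]$, whence independence of the compactification. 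Your additivity step does indeed land on a single instance of (ac): for closed $Z\subset V$ take $\bar{Z}$ the closure of $Z$ in $\bar{V}$, use $\bar{V}$ as compactification of both $V$ and $V\setminus Z$ and $\bar{Z}$ for $Z$, and apply (ac) to the closed embedding $q\colon\bar{Z}\hookrightarrow(\bar{V}\setminus V)\cup\bar{Z}$ with $Z'=\bar{V}\setminus V$, which is an isomorphism over the complement $Z$; this yields exactly $\Phi([V\to X])=\Phi([Z\to X])+\Phi([V\setminus Z\to X])$. With these verifications your two composites are the identity on generators and the isomorphism follows, so the proposal stands.
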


 For a proper map $X' \to X$, consider the filtered Du Bois complex
 $$ (\underline \Omega_{X'}^*, F),$$
 which satisfies that
 \begin{enumerate}
 \item
 $\underline\Omega_{X'}^*$ is a resolution of the constant sheaf $\bC$.
 
 \item
 $gr_F^p(\underline \Omega_{X'}^*) \in D_{coh}^b(X').$
 
 \item
Let $DR(\Cal O_{X'}) = \Omega_{X'}^*$ be the De Rham complex of $X'$ with $\sigma$ being the stupid filtration. Then there is a filtered morphism
$$ \lambda : (\Omega_{X'}^*, \sigma) \to (\underline \Omega_{X'}^*, F).$$
If $X'$ is smooth, then this is a filtered quasi-isomorphism.
\end{enumerate}

 Note that $G_0(X') \cong K_0(D_{coh}^b(X'))$.
 Let us define
 $$[gr_F^p(\underline \Omega_{X'}^*)] := \sum_i (-1)^i H^i(gr_F^p(\underline \Omega_{X'}^*) ) \in  K_0(D_{coh}^b(X')) = G_0(X').$$
 
 \begin{thm} The transformation
 $$\Lambda _y^{mot}: K_0(\Cal {VAR}/X) \to G_0(X) \otimes \bZ[y] \qquad \qquad $$
 defined by
 $$\Lambda _y^{mot}([X' \xrightarrow {h} X]) := \sum_p h_*[gr_F^p(\underline \Omega_{X'}^*)] (-y)^p $$
 is well-defined and is a unique natural transformation satisying the normalization condition that for smooth $X$
 $$\Lambda _y^{mot}([X \xrightarrow {\op {id}_X} X]) = \sum_{p =  0}^{dimX} [\Omega_X^p]y^p = \lambda_y(T^*X) \otimes \Cal O_X .$$
 \end{thm}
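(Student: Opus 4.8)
The plan is to check, in turn: (i) that the assignment $[X'\xrightarrow{h}X]\mapsto \sum_p h_*[gr_F^p(\underline{\Omega}_{X'}^*)](-y)^p$ on $Iso^{prop}(\Cal{VAR}/X)$ descends to $K_0(\Cal{VAR}/X)$; (ii) that the resulting map is natural for proper pushforward; (iii) that it satisfies the normalization condition; and then (iv) to deduce uniqueness from the general principle already used in the proof of Theorem \ref{theorem1}. To begin, observe that for proper $h\colon X'\to X$ the class $\Lambda_y^{mot}(X'):=\sum_p[gr_F^p(\underline{\Omega}_{X'}^*)](-y)^p$ lives in $G_0(X')\otimes\bZ[y]$: by property (2) each $gr_F^p(\underline{\Omega}_{X'}^*)$ lies in $D^b_{coh}(X')$, and it vanishes for $p<0$ and for $p>\op{dim}X'$, so the sum is a genuine polynomial in $y$ (in contrast to PROOF 1, no argument is needed to rule out negative powers of $y$). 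Thus the formula above is simply $h_*$ of the intrinsic class $\Lambda_y^{mot}(X')$, a fact that makes the remaining steps transparent; in particular the relation $[\emptyset\to X]=0$ is immediate since $\underline{\Omega}_\emptyset^*=0$.

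The heart of the matter is step (i), for which I would invoke Theorem \ref{lem:iso1}: it suffices to verify the ``acyclicity'' relation (ac) for any cartesian square as in that theorem. Writing $w\colon\tilde{Z}'\to X'$ for the common composite $\tilde{Z}'\to\tilde{X}'\xrightarrow{q}X'$ and $\tilde{Z}'\to Z'\xrightarrow{i}X'$, the required input is the \emph{descent distinguished triangle} for filtered Du Bois complexes attached to such a blow-up square,
$$(\underline{\Omega}_{X'}^*,F)\longrightarrow Rq_*(\underline{\Omega}_{\tilde{X}'}^*,F)\oplus Ri_*(\underline{\Omega}_{Z'}^*,F)\longrightarrow Rw_*(\underline{\Omega}_{\tilde{Z}'}^*,F)\xrightarrow{\ [1]\ },$$
which is built into Du Bois's construction through cubical/simplicial hyperresolutions (cf. \cite{DB}). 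Applying the exact functor $gr_F^p$ yields, for each $p$, a distinguished triangle in $D^b_{coh}(X')$; since $q,i,w$ and the structure map $X'\to X$ are all proper, pushing this triangle forward to $X$ and taking alternating sums of cohomology sheaves in $G_0(X)$ gives
$$(h_{X'})_*[gr_F^p\underline{\Omega}_{X'}^*]=(h_{\tilde{X}'})_*[gr_F^p\underline{\Omega}_{\tilde{X}'}^*]+(h_{Z'})_*[gr_F^p\underline{\Omega}_{Z'}^*]-(h_{\tilde{Z}'})_*[gr_F^p\underline{\Omega}_{\tilde{Z}'}^*],$$
and multiplying by $(-y)^p$ and summing over $p$ is exactly relation (ac). Hence $\Lambda_y^{mot}$ is well-defined on $K_0(\Cal{VAR}/X)$.

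Steps (ii) and (iii) are then formal. For naturality: given $f\colon X\to Y$, one has $f_*[X'\xrightarrow{h}X]=[X'\xrightarrow{fh}Y]$, and since on a bracket $\Lambda_y^{mot}$ is $h_*$ (resp. $(fh)_*=f_*h_*$) of the intrinsic class $\Lambda_y^{mot}(X')$, functoriality of pushforward on $G_0(-)$ gives $\Lambda_y^{mot}\circ f_*=f_*\circ\Lambda_y^{mot}$. For the normalization: if $X$ is smooth, property (3) says $\lambda\colon(\Omega_X^*,\sigma)\to(\underline{\Omega}_X^*,F)$ is a filtered quasi-isomorphism, so $gr_F^p\underline{\Omega}_X^*\simeq gr_\sigma^p\Omega_X^*=\Omega_X^p[-p]$ in $D^b_{coh}(X)$, whence $[gr_F^p\underline{\Omega}_X^*]=(-1)^p[\Omega_X^p]$ in $G_0(X)$ and
$$\Lambda_y^{mot}([X\xrightarrow{\op{id}_X}X])=\sum_{p}(-1)^p[\Omega_X^p](-y)^p=\sum_{p=0}^{\op{dim}X}[\Omega_X^p]y^p=\lambda_y(T^*X)\otimes\Cal O_X,$$
as required. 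Finally, uniqueness follows from the surjections $A$ and $B$ of the proof of Theorem \ref{theorem1}: by Nagata/projective closure together with Hironaka's resolution and induction on dimension, $K_0(\Cal{VAR}/X)$ is generated by classes $[V\xrightarrow{h}X]$ with $V$ smooth and $h$ proper, and on such a generator naturality and the normalization force the value to be $h_*\big(\sum_p[\Omega_V^p]y^p\big)$; thus any transformation with these two properties is unique.

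The step I expect to be the genuine obstacle is step (i), the well-definedness: once the descent triangle is in hand everything reduces to bookkeeping with proper pushforwards, but the compatibility of the \emph{filtered} Du Bois complex with abstract blow-up squares is a real structural input from the theory of hyperresolutions, and some care is needed to have the filtration $F$ respected strictly enough that applying $gr_F^p$ termwise to the triangle is legitimate.
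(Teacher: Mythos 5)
Your proposal is correct and takes essentially the same route as the paper: its (very terse) proof likewise obtains well-definedness by checking the acyclicity relation of Theorem \ref{lem:iso1}, citing \cite{DB} for precisely the filtered descent triangle for abstract blow-up squares that you spell out, and deduces uniqueness from resolution of singularities (the surjections $A$, $B$) together with the normalization condition. Your sign computation $[gr_F^p(\underline \Omega_{X}^*)] = (-1)^p[\Omega_X^p]$ for smooth $X$, explaining the factor $(-y)^p$, is exactly the content of the remark the paper places after the theorem.
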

 
 \begin{proof} The well-definedness follows simply from the fact that $\Lambda _y^{mot}$ preserves the above ``acyclicity'' relation \cite{DB}. Then the uniqueness follows from resolution of singularities and the normalization conditon for smooth varieties. \\
 \end{proof}
 
\begin{rem}When $X$ is smooth, $[gr_{\sigma}^p(\underline \Omega_{X}^*)] = (-1)^p[\Omega_X^p]$ !
 That is why we need $(-y)^p$, instead of $y^p$, in the above definition of $\Lambda _y^{mot}([X' \xrightarrow {h} X]) $.
 \end{rem}
 
 \begin{rem} When $y=0$, we have the following natural transformation
 $$\Lambda_0^{mot}: K_0(\Cal {VAR}/X)  \to G_0(X) \quad \text {defined by } \quad \Lambda_0^{mot}([X' \xrightarrow {h} X]) = h_*[gr_F^0(\underline \Omega_{X'}^*)] $$
 satisying the normalization condition that for a smooth $X$
 $$\Lambda_0^{mot}([X \xrightarrow {\op {id}_X} X]) = [\Cal O_X].$$
 \end{rem}
 \vspace{0.5cm}
 
\noindent
{\bf [PROOF 3]:}
 \underline {By using Bittner's theorem on $K_0(\Cal {VAR}/X)$} \cite{Bi}: Herer we recall that we have the following surjective homomophism
$$ B: Iso^{prop}(\Cal {SM}/X) \twoheadrightarrow K_0(\Cal {VAR}/X) .$$
$ker B$ is identified by F. Bittner and E. Looijenga as follows \cite{Bi} :
\begin{thm}
The group $K_{0}(\Cal {VAR}/X)$ is isomorphic to the quotient of $Iso^{prop}(\Cal {SM}/X)$ (the free abelian group generated by the isomorphism classes of proper morphisms from smooth varieties to $X$) modulo the
``blow-up'' relation
\begin{equation} \label{eq:bl}
[\emptyset \to X]=0 \quad \text{and} \quad
[Bl_{Y}X'\to X] - [E\to X]= [X'\to X] - [Y\to X] \:,\tag{bl}
\end{equation}
for any cartesian diagram
\begin{displaymath} \begin{CD}
E @> i'>> Bl_{Y}X' \\
@VV q' V  @VV q V \\
Y @> i >> X' @> f >> X \:,
\end{CD} \end{displaymath}
with $i$ a closed embedding of smooth (pure dimensional) spaces and 
$f:X'\to X$ proper. Here $Bl_{Y}X'\to X'$ is the blow-up of $X'$ along 
$Y$ with exceptional divisor $E$. Note that all these spaces over $X$ are
also smooth (and pure dimensional and/or quasi-projective, if this is the case for $X'$ and $Y$).
\end{thm}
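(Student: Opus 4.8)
The plan is to exhibit a two-sided inverse to the surjection $B$. Write $K_{0}^{bl}(\Cal{VAR}/X)$ for the quotient of $Iso^{prop}(\Cal{SM}/X)$ by the blow-up relation~(bl). First I would note that~(bl) already holds in $K_{0}(\Cal{VAR}/X)$: in a blow-up square one has $Bl_{Y}X'\setminus E\xrightarrow{\ \sim\ }X'\setminus Y$ over $X$, so additivity gives $[Bl_{Y}X'\to X]-[E\to X]=[X'\to X]-[Y\to X]$. Hence $B$ factors through a surjection $\overline{B}\colon K_{0}^{bl}(\Cal{VAR}/X)\twoheadrightarrow K_{0}(\Cal{VAR}/X)$, and it remains to build an additive map $e\colon Iso(\Cal{VAR}/X)\to K_{0}^{bl}(\Cal{VAR}/X)$ that kills the scissor relations and satisfies $\overline{e}\circ\overline{B}=\op{id}$ for the induced map $\overline{e}$ on $K_{0}(\Cal{VAR}/X)$.

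Next I would define $e$ via \emph{good compactifications}. For $[V\xrightarrow{h}X]$ with $V$ smooth, Nagata's factorization of $h$ together with Hironaka's resolution (carried out away from the dense smooth locus, so that $V$ is left untouched) yields a smooth variety $W$, a proper map $\overline{h}\colon W\to X$, a dense open embedding $V\hookrightarrow W$ over $X$, and a simple normal crossing divisor $D=D_{1}\cup\cdots\cup D_{r}=W\setminus V$; I would then set $e([V\to X]):=\sum_{I\subseteq\{1,\dots,r\}}(-1)^{|I|}\,[D_{I}\to X]$, where $D_{I}:=\bigcap_{i\in I}D_{i}$ is smooth and proper over $X$ and $D_{\emptyset}:=W$, so every term is a genuine generator of $K_{0}^{bl}(\Cal{VAR}/X)$. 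For singular $V$ one sets $e([V\to X]):=e([V_{\op{reg}}\to X])+e([V_{\op{sing}}\to X])$ and recurses on dimension, with $e([\emptyset\to X])=0$. Note that when $h$ is already proper and $V$ is smooth one may take $W=V$, $D=\emptyset$, so that $e([V\to X])=[V\to X]$ in $K_{0}^{bl}(\Cal{VAR}/X)$; this is what will give $\overline{e}\circ\overline{B}=\op{id}$ at the end.

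The hard part will be the \emph{well-definedness of $e$ on smooth $V$}, i.e.\ independence of the good compactification. Two good compactifications are dominated by a common third one (resolve the closure of $V$ in the fibre product, which is proper over $X$), so the question reduces to comparing $(W,D)$ with $(W',D')$ when $W'\to W$ is proper birational over $X$ and an isomorphism over $V$. Here I would invoke the weak factorization theorem in the form that respects a normal crossing divisor (Abramovich--Karu--Matsuki--W\l odarczyk, W\l odarczyk): $W'\dashrightarrow W$ factors as a chain of blow-ups and blow-downs along smooth centres that lie over $W\setminus V$ and are in normal crossing position with the boundary at every stage. It then suffices to treat a single blow-up $\widetilde{W}=Bl_{Z}W\to W$ with $Z$ smooth and $Z+D$ normal crossing, and to check that $\sum_{I}(-1)^{|I|}[D_{I}\to X]$ is unchanged; this is a combinatorial computation in which the blow-up relation~(bl) is applied to the induced blow-ups of the strata $D_{I}$ along $D_{I}\cap Z$ and to the stratification of the new boundary $E+\sum_{i}\widetilde{D}_{i}$. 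I expect this to be the main obstacle: it is precisely where~(bl) is indispensable, and it forces one to propagate the normal crossing condition through the factorization, so that one needs the functorial/strong forms of resolution and the normal-crossing refinement of weak factorization rather than bare birational factorization.

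Finally I would check that $e$ kills the scissor relation. Given a closed $Z\subseteq V$, reduce to $V$ smooth, and then (using the well-definedness just established) choose a good compactification $(W,D)$ of $V$ in which, after further blow-ups provided by resolution, the closure $\overline{Z}$ is smooth and $\overline{Z}+D$ is normal crossing. Computing $e([V\to X])$ from $(W,D)$, $e([V\setminus Z\to X])$ from $(W,\,D+\overline{Z})$ (here $W\setminus(D+\overline{Z})=V\setminus\overline{Z}=V\setminus Z$), and $e([Z\to X])$ from $(\overline{Z},\,\overline{Z}\cap D)$, the identity $e([V\to X])=e([Z\to X])+e([V\setminus Z\to X])$ becomes a purely combinatorial inclusion--exclusion over strata; components of the singular part of $Z$ are absorbed by the inductive definition. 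Thus $e$ descends to $\overline{e}\colon K_{0}(\Cal{VAR}/X)\to K_{0}^{bl}(\Cal{VAR}/X)$, and on a generator $[W\xrightarrow{h}X]$ with $W$ smooth and $h$ proper one has $\overline{e}\,\overline{B}([W\to X])=e([W\to X])=[W\to X]$, so $\overline{B}$ is injective; being surjective, it is the desired isomorphism. (The opposite composite $\overline{B}\circ\overline{e}=\op{id}$ follows either symmetrically or by iterating scissor relations inside $K_{0}(\Cal{VAR}/X)$ on the defining formula for $e$.)
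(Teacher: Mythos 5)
Your proposal is correct in outline and follows essentially the same route as the proof the paper relies on: the paper does not reprove this statement but refers it to Bittner \cite{Bi}, whose argument is precisely your construction of an inverse on smooth varieties via SNC compactifications and the alternating sum $\sum_I(-1)^{|I|}[D_I\to X]$, with independence of the compactification settled by the divisor-respecting form of the weak factorization theorem \cite{AKMW, W} --- the one ingredient the paper explicitly singles out as indispensable. Apart from the fact that the blow-up/stratum computation and the additivity check are only sketched (as in any outline), your approach coincides with the cited proof.
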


The proof of this Bittner's theorem requires Abramovich et al's ``Weak Factorisation Theorem" \cite{AKMW} (see also \cite{W}). \\

\begin{cor} \label{3proof}
\item{(1)}
Let $B_{*}: \Cal{VAR}/k \to \Cal {AB}$ be a functor from the category $var/k$ of
(reduced) seperated schemes of finite type over $spec(k)$ to the category of abelian groups,
which is covariantly functorial
for proper morphism, with $B_{*}(\emptyset):=\{0\}$. Assume we can associate to any 
(quasi-projective) smooth space $X\in ob(\Cal{VAR}/k )$ 
(of pure dimension) a distinguished element $$\phi _{X}\in B_{*}(X)$$
 such that $h_{*}(\phi  _{X'})=\phi _{X}$ for any isomorphism $h: X'\to X$.
 Then there exists a unique natural transformation
 $$\Phi : Iso^{prop}(\Cal {SM}/-) \to B_{*}(-)$$
satisfying the ``normalization'' condition that for any smooth $X$ 
$$\Phi([X \xrightarrow {\op {id}_X} X])=\phi _{X}.$$

\item{(2)}
Let $B_{*}: \Cal{VAR}/k \to \Cal {AB}$ and $\phi _X$ be as above and furthermore we assume that 
$$q_{*}(\phi  _{Bl_{Y}X})-i_{*}q'_{*}(\phi  _{E}) = \phi  _{X}-i_{*}(\phi  _{Y}) \in B_{*}(X)$$
for any cartesian blow-up diagram as in the above Bittner's theorem with $f=id_{X}$. Then there exists a unique natural transformation
$$\Phi : K_0(\Cal {VAR}/-) \to B_{*}(-)$$
satisfying the ``normalization'' condition that for any smooth $X$ 
$$\Phi([X \xrightarrow {\op {id}_X} X])=\phi _{X}.$$
\end{cor}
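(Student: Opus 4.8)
The plan is to deduce both statements from Bittner's presentation of $K_{0}(\Cal{VAR}/X)$ recalled just above: part~(1) is the formal ``free'' half and needs no hard input, while part~(2) follows from it once one checks that the transformation built in~(1) annihilates the blow-up relations.

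\emph{Part (1).} Since $Iso^{prop}(\Cal{SM}/X)$ is by definition the free abelian group on the classes $[V\xrightarrow{h}X]$ with $V$ smooth (of pure dimension) and $h$ proper, I would simply set
$$\Phi\bigl([V\xrightarrow{h}X]\bigr):=h_{*}(\phi_{V})\in B_{*}(X)$$
and extend $\bZ$-linearly. (If $\phi$ is only supplied on pure-dimensional smooth spaces, one first extends it additively over the finitely many connected, hence pure-dimensional, components of a smooth $V$; this is forced and consistent.) Three verifications remain, all immediate from the hypotheses. First, $\Phi$ is independent of the chosen representative: if $g\colon V'\to V$ is an isomorphism over $X$, i.e.\ $h'=h\circ g$, then $h'_{*}(\phi_{V'})=h_{*}g_{*}(\phi_{V'})=h_{*}(\phi_{V})$ using $g_{*}(\phi_{V'})=\phi_{V}$. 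Second, $\Phi$ is natural: for proper $f\colon X\to Y$ and a generator $[V\xrightarrow{h}X]$ one has $f_{*}\Phi([V\xrightarrow{h}X])=f_{*}h_{*}(\phi_{V})=(f\circ h)_{*}(\phi_{V})=\Phi(f_{*}[V\xrightarrow{h}X])$. Third, the normalization $\Phi([X\xrightarrow{\op{id}_{X}}X])=(\op{id}_{X})_{*}(\phi_{X})=\phi_{X}$ holds by construction. Uniqueness is automatic: any natural $\Psi$ with the normalization property satisfies, on a generator, $\Psi([V\xrightarrow{h}X])=\Psi(h_{*}[V\xrightarrow{\op{id}_{V}}V])=h_{*}\Psi([V\xrightarrow{\op{id}_{V}}V])=h_{*}(\phi_{V})$, so $\Psi=\Phi$.

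\emph{Part (2).} Now run $\Phi$ from part~(1) through Bittner's theorem \cite{Bi}, which gives a surjection $B\colon Iso^{prop}(\Cal{SM}/X)\twoheadrightarrow K_{0}(\Cal{VAR}/X)$ whose kernel is generated by $[\emptyset\to X]$ together with the blow-up elements
$$r_{f}:=[Bl_{Y}X'\to X]-[E\to X]-[X'\to X]+[Y\to X]$$
attached to the cartesian blow-up diagrams of that theorem (with $f\colon X'\to X$ proper). It suffices to see that $\Phi$ kills $\ker B$; then $\Phi$ descends to a natural transformation $K_{0}(\Cal{VAR}/-)\to B_{*}(-)$, the normalization survives, and uniqueness follows from uniqueness in~(1) together with surjectivity of $B$. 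Now $\Phi([\emptyset\to X])=0$ because $B_{*}(\emptyset)=\{0\}$. For the blow-up elements, observe that in the special case $f=\op{id}_{X}$ (so $X'=X$) the commutativity $q\circ i'=i\circ q'$ identifies $\Phi(r_{\op{id}_{X}})$ with $q_{*}(\phi_{Bl_{Y}X})-i_{*}q'_{*}(\phi_{E})-\phi_{X}+i_{*}(\phi_{Y})$, which vanishes precisely by the standing hypothesis of~(2). For a general proper $f\colon X'\to X$, write $r_{f}=f_{*}(\tilde r)$, where $\tilde r\in Iso^{prop}(\Cal{SM}/X')$ is the blow-up element of the \emph{same} cartesian diagram but regarded over $X'$ with bottom map $\op{id}_{X'}$; then naturality of $\Phi$ from part~(1) gives $\Phi(r_{f})=\Phi(f_{*}\tilde r)=f_{*}\Phi(\tilde r)=f_{*}(0)=0$. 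Hence $\ker B\subseteq\ker\Phi$, as needed.

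The only substantial ingredient is Bittner's theorem itself, which rests on the Weak Factorization Theorem \cite{AKMW} and which we are entitled to assume; granting it, the argument is pure diagram-chasing with proper pushforwards. The one point to stay alert about is the passage between the hypothesis of~(2), phrased for blow-up squares with $f=\op{id}_{X}$, and Bittner's relations, phrased over a fixed $X$ with an \emph{arbitrary} proper $f$: these differ by a single pushforward $f_{*}$, and it is exactly the naturality established in part~(1) that converts the former into the latter.
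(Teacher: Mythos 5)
Your argument is correct and is exactly the route the paper intends: part (1) is the formal construction $\Phi([V\xrightarrow{h}X])=h_{*}(\phi_{V})$ on the free group $Iso^{prop}(\Cal{SM}/X)$, and part (2) descends through Bittner's presentation by checking that the blow-up elements die, with the reduction from arbitrary proper $f$ to $f=\op{id}$ handled by naturality (i.e.\ $r_{f}=f_{*}(\tilde r)$), which is precisely the point the paper relies on. No gaps; the paper itself gives no more detail than this, deferring the substance to Bittner's theorem and the Weak Factorization Theorem.
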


PROOF 3  of our ${T_y}_*$ uses (2) of the above Corollary \ref{3proof}  by considering the coherent sheaf 
$$\Omega_X^p \in G_0(X)$$ 
of a smooth $X$ as the distinguished element $\phi_X$ of a smooth $X$. Because it follows from M. Gros's work \cite{Gr} or the recent Guill\'en--Navarro Aznar's work \cite{GNA} that it satisfies the ``blow-up relation"
  $$q_{*}(\Omega^p_{Bl_{Y}X})-i_{*}q'_{*}(\Omega^p_{E}) = \Omega^p_{X}-i_{*}(\Omega^p_{Y}) \in G_0(X),$$
  which implies the following  ``blow-up relation" for the $\lambda_y$-class
$$q_{*}(\lambda_y (\Omega_{Bl_{Y}X}))-i_{*}q'_{*}(\lambda_y (\Omega_{E}) )= \lambda_y(\Omega_{X})-i_{*}(\lambda_y (\Omega_{Y})) \in G_0(X)\otimes \bZ[y].$$ 
Therefore (2) of the above Corollary \ref {3proof}  implies the following theorem. 
\begin{thm} The transformation
 $$\Lambda _y^{mot}: K_0(\Cal {VAR}/X) \to G_0(X) \otimes \bZ[y] \qquad \qquad $$
 defined by
 $$\Lambda _y^{mot}([X' \xrightarrow {h} X]) := h_*  \biggl (\sum_{p\geq 0} [\Omega_{X'}^p] y^p \biggr) ,$$
where $X'$ is smooth and $h:X' \to X$ is proper, is well-defined and is a unique natural transformation satisying the normalization condition that for smooth $X$
 $$\Lambda _y^{mot}([X \xrightarrow {\op {id}_X} X]) = \sum_{p =  0}^{dimX} [\Omega_X^p]y^p = \lambda_y(T^*X) \otimes \Cal O_X.$$
 \end{thm}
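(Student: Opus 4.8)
The plan is to deduce both well-definedness and uniqueness at once from part (2) of Corollary \ref{3proof}. Take as target functor $B_*(-) := G_0(-) \otimes \bZ[y]$, which is covariant for proper morphisms with $B_*(\emptyset) = \{0\}$, and to every smooth pure-dimensional $X$ assign the distinguished element
$$\phi_X := \lambda_y(\Omega_X) = \sum_{p \geq 0} [\Omega_X^p]\, y^p \in G_0(X) \otimes \bZ[y].$$
First I would dispose of the elementary compatibility required in Corollary \ref{3proof}(1): for an isomorphism $h : X' \to X$ one has $h_*[\Omega_{X'}^p] = [\Omega_X^p]$ coefficient-wise, hence $h_*(\phi_{X'}) = \phi_X$. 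So all the hypotheses of Corollary \ref{3proof}(1), (2) are in place except the blow-up compatibility, which is the real content.

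The substantive step is to verify the blow-up relation demanded by Corollary \ref{3proof}(2): for every cartesian blow-up square as in Bittner's theorem (with $i$ a closed embedding of smooth pure-dimensional spaces),
$$q_*(\phi_{Bl_{Y}X}) - i_* q'_*(\phi_E) = \phi_X - i_*(\phi_Y) \in G_0(X) \otimes \bZ[y].$$
Since $\bZ[y]$ is free as an abelian group, it is enough to check this coefficient-wise, i.e. to prove for each fixed $p \geq 0$ that
$$q_*[\Omega^p_{Bl_{Y}X}] - i_* q'_*[\Omega^p_{E}] = [\Omega^p_{X}] - i_*[\Omega^p_{Y}] \in G_0(X),$$
and then multiply by $y^p$ and sum. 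This is precisely the blow-up formula for the Hodge sheaves $\Omega^p$ recalled just before the statement, which follows from M. Gros's computation \cite{Gr} (or, more structurally, from the descent formalism of Guill\'en--Navarro Aznar \cite{GNA}). I expect this to be the main obstacle in any self-contained account --- it is exactly where the behaviour of K\"ahler differentials under blow-ups along smooth centers must be controlled in $G_0$ --- but for the present purposes it may simply be quoted.

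Granting the blow-up relation, Corollary \ref{3proof}(2) yields a \emph{unique} natural transformation $\Phi : K_0(\Cal{VAR}/-) \to G_0(-)\otimes\bZ[y]$ with $\Phi([X \xrightarrow{\mathrm{id}_X} X]) = \phi_X = \lambda_y(T^*X) \otimes \Cal O_X$ for every smooth $X$; define $\Lambda_y^{mot} := \Phi$. It remains only to identify $\Phi$ with the formula in the statement. For $X'$ smooth and $h : X' \to X$ proper one has $[X' \xrightarrow{h} X] = h_*\bigl([X' \xrightarrow{\mathrm{id}_{X'}} X']\bigr)$ in $K_0(\Cal{VAR}/X)$, so naturality together with the normalization gives
$$\Phi([X' \xrightarrow{h} X]) = h_*\,\Phi\bigl([X' \xrightarrow{\mathrm{id}_{X'}} X']\bigr) = h_*\Bigl(\sum_{p \geq 0} [\Omega_{X'}^p]\, y^p\Bigr),$$
which is the asserted description. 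Since, by Bittner's theorem, the classes $[X' \xrightarrow{h} X]$ with $X'$ smooth and $h$ proper generate $K_0(\Cal{VAR}/X)$, this computation simultaneously shows that the proposed formula is independent of the chosen representative (well-definedness) and that no other natural transformation can satisfy the normalization condition (uniqueness).
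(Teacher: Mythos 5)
Your proposal is correct and follows essentially the same route as the paper's PROOF 3: apply Corollary \ref{3proof}(2) with $B_*(-) = G_0(-)\otimes\bZ[y]$ and distinguished element $\phi_X = \lambda_y(\Omega_X)$, quoting Gros \cite{Gr} or Guill\'en--Navarro Aznar \cite{GNA} for the coefficient-wise blow-up formula for $\Omega^p$ in $G_0$, and then identifying the resulting unique natural transformation with the stated formula via naturality on the generators $[X' \xrightarrow{h} X] = h_*[\mathrm{id}_{X'}]$.
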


\begin{rem} \label{beta-remark}
The forementioned \, virtual \, Poincar\'e polynomial $\beta_t$ \, for the category \, $\Cal {VAR}(\bR)$ of real algeraic varieties is the unique homomorphism
$$\beta_t: K_0(\Cal {VAR}(\bR)) \to \bZ[t] \quad \text {such that} \quad \beta_t(\bR^1) = t$$
and $\beta_t(X) = P_t(X)$ the classical or usual topological Poincar\'e polynomial for compact nonsingular varieties. The proof of the existence of $\beta_i$, thus $\beta_t$,  also uses (2) of the above  Corollary \ref{3proof}  (see \cite{MP}). Speaking of the Poincar\'e polynomial $P_t(X)$, we emphasize that this polynoimal cannot be a ``topological counting"$c_{top}$ at all in the category of topological spaces, simply because the argument in the proof of Proposition \ref{c_top} does not work! Thus the Poincar\'e polynomial $P_t(X)$ is certainly a \emph{ multiplicative topological invariant}, but not \emph {an additive topological invariant}!
\end{rem}

\begin{rem}\label{beta}
The virtual Poincar\'e polynomial $\beta_t: K_0(\Cal {VAR}(\bR)) \to \bZ[t] $ is the \emph{unique} extension of the Poincar\'e polynomial $P_t(X)$ to arbitrary varieties. Here it should be noted that if we consider complex algebraic varieties, the virtual Poincar\'e polynomial 
$$\beta_t: K_0(\Cal {VAR}) \to \bZ[t] $$
is equal to the following motivic characteristic, using only the weight filtration,
$$w\chi(X) = \sum (-1)^i \op {dim}_{\bC} \left ( Gr_q^W H_c^i(X, \bC) \right) t^q,$$
because on any smooth compact complex algebraic variety $X$ they are all the same:
$$\beta_t(X) = P_t(X) = w\chi(X).$$
The last equalities follow from the purity of the Hodge structures on  $H^k(X,\bQ)$, i.e., the Hodge structures are of pure weight $k$.

This ``weight filtration" motivic characteristic $w\chi(X)$ is equal to the specialization $\chi_{-t, -t}$ of the Hodge--Deligne polynomial for $(u, v) = (-t, -t)$. This observation implies that there is \emph {\underline {no class version} of the (complex) virtual Poincar\'e \underline{polynomial}}
$\beta_t: K_0(\Cal {VAR}) \to \bZ[t] $, i.e., there is \underline{no} natural transformation
$$\natural : K_0(\Cal {VAR}/-) \to H_*^{BM}(-) \otimes \bZ[t] $$
such that 
\begin{itemize}
\item
for a smooth compact $X$
$$\natural ([X \xrightarrow {\op {id}_X} X]) = c\ell(TX) \cap [X]$$
for some multiplicative characteristic class of complex vector bundles,

\item 
$$\natural(pt) = \beta_t : K_0(\Cal {VAR}) \to \bZ[t] .$$
\end{itemize}
It is because that $\beta_t(X) = \chi_{-t, -t}(X)$ for a smooth compact complex algebraic variety $X$ (hence for all $X$), thus as in  Remark \ref{elliptic} one can conclude that $(-t, -t) = (-1, -1)$, thus $t$ has to be equal to $1$, i.e., $t$ cannot be allowed to vary. In other words, the only chance for such a class version is when $t = 1$, i.e., the Euler--Poincar\'e characteristic
$\chi: K_0(\Cal {VAR}) \to \bZ$. In which case, we do have the Chern class transformation
$$c_*:K_0(\Cal {VAR}/-) \to H_*^{BM}(-;\bZ).$$
This follows again from (2) of Corollary \ref{3proof} and the blow-up formula of Chern class \cite{Ful}.
\end{rem}
\begin{rem} The same discussion as in the above Remark \ref {beta} can be applied to the context of real algebraic varieties, i.e., the same example for real elliptic curves lead us to the conclusion that $t =1$ for $\beta_t$ satisfying the corresponding normalization condition for a normalized multiplicative characteristic class. This class has to be a polynomial in the Stiefel--Whitney classes, and we end up with the Stiefel--Whitney homology class $w_*$, which also satisfies the corresponding blow-up formula.
\end{rem}

\begin{rem} (``Poorest man's" motivic characteristic class) If we use the above much simpler covariant functor $Iso^{prop}(\Cal {SM}/X)$ (the abelian group of ``poorest man's motivic functions"), we can get the following ``poorest man's motivic characteristic class" for any characteristic class $c\ell$ of vector bundles as follows:\\Let $c\ell$ be \underline {any} characteristic class of vector bundles with the coefficient ring $K$. Then there exists a unique natural trnasformation
$$c\ell_*: Iso^{prop}(\Cal {SM}/-) \to H_*^{BM}(-) \otimes K$$
satisfying the normalization condition that for any smooth variety $X$
$$ c\ell_*([X \xrightarrow{id_{X}}  X]) = c\ell(TX) \cap [X].$$
There is a bivariant theoretical version of $Iso^{prop}(\Cal {SM}/X)$ (see \cite{Yokura-OBT}). For a general reference for the bivariant theory, see Fulton--MacPherson's AMS Memoirs \cite{FM}.\\
\end{rem}

\section {A ``unification" of Chern class, Todd class and L-class of singular varieties}

Our next task is to show that in a sense our motivic characteristic class ${T_y}_*$ gives rise to  a ``unification" of  MacPherson's Chern class, Baum--Fulton--MacPherson's Todd class (recalled in the previous section) and Cappell--Shaneson's L-class of singular varieties, which is another main theorem of \cite{BSY1}. \\

Here we recal these three:\\

\noindent
(1) \underline {MacPherson's Chern class } \cite{M1}:
\begin{thm}
There exists a unique natural transformation
$$c_*^{Mac}:F(-) \to H_*^{BM}(-)$$
such that for smooth $X$
$$c_*^{Mac}(\jeden_X) = c(TX) \cap [X].$$
Here $F(X)$ is the abelian group of constructible functions, which is a covariant functor with the pushforward $f_*:F(X) \to F(Y)$ for a proper morphism $f:X \to Y$ defined by
$$f_*(\jeden_W) (y) =  \chi_c(f^{-1}(y) \cap W).$$ 
$c_*^{Mac}(X) := c_*^{Mac}(\jeden_X)$ is called  MacPherson's Chern class of $X$ (or the Chern -- \, Schwartz--MacPherson class).
$$\chi(X) = \int_X c_*^{Mac}(X) .$$
\end{thm}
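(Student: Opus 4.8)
The plan is to prove this in the classical way, following MacPherson. \textbf{Uniqueness} is the easy half and goes exactly as in Theorem~\ref{theorem1}(1): by Hironaka's resolution of singularities and induction on dimension, every constructible function $\alpha\in F(X)$ can be written as a finite $\bZ$-linear combination $\alpha=\sum_V a_V\,(h_V)_*(\jeden_V)$ with each $V$ smooth and each $h_V:V\to X$ proper (one resolves $\jeden_{\bar W}$ for an irreducible subvariety $W$ and absorbs the lower-dimensional non-isomorphism locus by induction). Since $F$ is covariant for proper maps, the naturality of the transformation together with the normalization condition on the smooth $V$ forces
$$c_*^{Mac}(\alpha)=\sum_V a_V\,(h_V)_*\bigl(c(TV)\cap[V]\bigr),$$
leaving no freedom.

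For \textbf{existence}, I would first construct, for any possibly singular $n$-dimensional $X$, the \emph{Chern--Mather class} via the Nash blowup $\nu:\widetilde X\to X$ and its tautological rank-$n$ Nash bundle $\widetilde T$ (which restricts to $TX$ over the regular locus), setting $c_*^{Ma}(X):=\nu_*\bigl(c(\widetilde T)\cap[\widetilde X]\bigr)$. Next I would introduce MacPherson's \emph{local Euler obstruction} $\op{Eu}_X\in F(X)$, an integer-valued constructible function defined by an obstruction-theoretic construction on the Nash blowup, with the property $\op{Eu}_X=\jeden_X$ when $X$ is smooth. The key algebraic point is that, as $W$ runs over irreducible closed subvarieties, the classes $\{(i_W)_*\op{Eu}_{\bar W}\}$ form a second $\bZ$-basis of $F(X)$, related to $\{\jeden_{\bar W}\}$ by a matrix that is triangular and unipotent with respect to dimension and inclusion of supports. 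One then simply defines $c_*^{Mac}\bigl((i_W)_*\op{Eu}_{\bar W}\bigr):=(i_W)_*\,c_*^{Ma}(\bar W)$ and extends $\bZ$-linearly; the normalization condition then holds because for smooth $X$ the Nash blowup is an isomorphism and $\jeden_X=\op{Eu}_X$, giving $c_*^{Mac}(\jeden_X)=c_*^{Ma}(X)=c(TX)\cap[X]$.

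The hard part --- and the step I expect to be the main obstacle --- is proving \textbf{naturality}, i.e. $f_*\circ c_*^{Mac}=c_*^{Mac}\circ f_*$ for proper $f:X\to Y$. Using the graph factorization $f=p\circ\gamma$ (with $\gamma:X\hookrightarrow X\times Y$ the graph and $p:X\times Y\to Y$ the projection) and a projective completion, this reduces to two cases: a closed embedding, and the projection $\bP^N\times Y\to Y$. The embedding case is a direct comparison of Nash data and of $\op{Eu}$ under a closed immersion. The projection case is the genuine difficulty: one must show that $c_*^{Mac}$ commutes with projection from a projective space, for which MacPherson's device is the \emph{graph construction}, deforming the graph of a generic section of a suitable bundle and tracking how the Chern--Mather class and the Euler obstruction vary; the verification ultimately comes down to computing both sides by intersecting with generic linear subspaces and using the multiplicativity of Chern classes of the Nash bundle on the product. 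Essentially all of the geometric work concentrates here. An alternative I would keep in mind, due to Sabbah and Kennedy, realizes the transformation through conic Lagrangian (characteristic) cycles in the cotangent bundle, so that naturality becomes a statement about the behavior of such cycles under proper pushforward; this trades the graph construction for microlocal geometry but does not obviously shorten the proof.

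Finally, the \textbf{Euler characteristic formula} follows formally once naturality is in hand: applying it to the constant map $\pi_X:X\to pt$, the pushforward $(\pi_X)_*\jeden_X$ is by definition of the proper pushforward of constructible functions the function on $pt$ with value $\chi_c(X)$, and $c_*^{Mac}$ on a point sends $m\cdot\jeden_{pt}$ to $m\in H_0^{BM}(pt)=\bZ$, while $(\pi_X)_*\bigl(c_*^{Mac}(\jeden_X)\bigr)$ is precisely $\int_X c_*^{Mac}(X)$. Hence $\int_X c_*^{Mac}(X)=\chi_c(X)$, which equals $\chi(X)$ for complex algebraic varieties.
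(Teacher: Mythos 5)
The paper does not actually prove this statement: it is recalled as MacPherson's theorem and attributed to \cite{M1}, so there is no in-paper argument to compare yours against. What you outline is precisely MacPherson's original proof, and as a roadmap it is accurate. Uniqueness via resolution of singularities and the generation of $F(X)$ by classes $(h_V)_*\jeden_V$ with $V$ smooth and $h_V$ proper is the standard argument, and it is the only one the paper itself alludes to (compare the remark following Theorem \ref{kmy}, which notes that uniqueness with integer coefficients is known only via resolution). For existence, your steps --- Chern--Mather class through the Nash blowup, the local Euler obstruction $\op{Eu}_{\bar W}$ with $\op{Eu}_X=\jeden_X$ for smooth $X$, the fact that $\{(i_W)_*\op{Eu}_{\bar W}\}$ is a basis of $F(X)$ related to $\{\jeden_{\bar W}\}$ by a triangular unipotent matrix, and the definition of $c_*^{Mac}$ on that basis --- are exactly MacPherson's construction, and the normalization check is correct.

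The caveat is the step you yourself flag: naturality for the projection $\bP^N\times Y\to Y$, handled by the graph construction, is the entire technical content of \cite{M1}. You name the device but do not execute it, so what you have is a faithful outline of the known proof rather than a self-contained argument; the alternative through conic Lagrangian cycles that you mention is the Kennedy \cite{Ken} approach and carries the same burden. Two smaller points: since the transformation is natural only for \emph{proper} morphisms, deriving $\chi(X)=\int_X c_*^{Mac}(X)$ by applying naturality to $\pi_X\colon X\to pt$ requires $X$ compact (where $\chi_c(X)=\chi(X)$ is automatic), which is also the only setting in which the degree $\int_X$ of a Borel--Moore class makes sense; and note that the paper's own machinery (Corollary \ref{3proof} together with the blow-up formula for Chern classes, cf. Remark \ref{beta}) only reconstructs a Chern class transformation on $K_0(\Cal{VAR}/X)$, not the finer integral statement on $F(X)$, which genuinely requires \cite{M1}.
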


\noindent
(2) Once again, \underline {Baum--Fulton--MacPherson's Todd class, or Riemann--Roch} \cite{BFM}:
\begin{thm}
There exists a unique natural transformation
$$td_*^{BFM}:G_0(-) \to H_*^{BM}(-)\otimes \bQ$$
such that for  smooth $X$
$$td_*^{BFM}(\Cal O_X) = td(TX) \cap [X].$$
Here $G_0(X)$ is the Grothendieck group of coherent sheaves on $X$, which is a covariant functor with the pushforward $f_*:G_0(X) \to G_0(Y)$ for a proper morphism $f:X \to Y$ defined by
$$f_!({\Cal F}) =  \sum_j (-1)^j [R^j f_*\Cal F].$$ 
$td_*^{BFM}(X) := td_*^{BFM}(\Cal O_X)$ is called the Baum--Fulton--MacPherson Todd class of $X$.
$$p_a(X) = \chi(X, \Cal O_X) = \int_X td_*^{BFM}(X).$$
\end{thm}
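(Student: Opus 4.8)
The plan is to follow the original Baum--Fulton--MacPherson argument \cite{BFM}: work first with quasi-projective $X$, embed it as a closed subvariety $i:X\hookrightarrow M$ of a smooth variety (say $M=\bP^N$), and transport the construction to the smooth ambient $M$, where $G_0(M)\cong K^0(M)$ and Grothendieck--Riemann--Roch is available; the general case is then obtained by patching these pieces together with the help of Chow's lemma.

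I would dispose of uniqueness first, as it is the softer half. By d\'evissage (filtering a coherent sheaf so that the graded pieces are pushforwards from integral subvarieties), $G_0(X)$ is generated by the classes $[\Cal O_V]$ of structure sheaves of closed subvarieties $V\subseteq X$; and by Hironaka's resolution of singularities and induction on dimension, for a resolution $\pi:\widetilde V\to V\hookrightarrow X$ one has $\pi_!\Cal O_{\widetilde V}=[\Cal O_V]$ modulo $G_0$-classes supported in dimension $<\dim V$. Hence $G_0(X)$ is generated by proper pushforwards $\pi_![\Cal O_{\widetilde V}]$ of structure sheaves of smooth varieties, and naturality for proper maps together with the normalization $td_*^{BFM}(\Cal O_{\widetilde V})=td(T\widetilde V)\cap[\widetilde V]$ forces the value of $td_*^{BFM}$ on every generator; so two such natural transformations coincide.

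For existence the key device is a \emph{localized} (homological) Chern character. The pushforward $i_*\Cal F$ is a coherent sheaf on the smooth $M$, so it has a finite resolution $E_\bullet\to i_*\Cal F$ by locally free sheaves, i.e.\ a bounded complex of vector bundles on $M$ exact off $X$. MacPherson's graph construction attaches to such a complex a well-defined class $ch^M_X(E_\bullet)\in H_*^{BM}(X)\otimes\bQ$ lifting the ordinary Chern character of $[i_*\Cal F]\in K^0(M)$, and one sets $\tau_X(\Cal F):=ch^M_X(E_\bullet)\cap\bigl(td(TM)\cap[M]\bigr)$. It then remains to verify, in order: (a) independence of the chosen resolution $E_\bullet$ and of the embedding $M$, the latter by comparing two embeddings through the diagonal $X\hookrightarrow M_1\times M_2$ and invoking the multiplicativity of the Todd class; (b) covariant functoriality for proper morphisms $f:X\to Y$, which one reduces --- after factoring $f$ (which is projective here) through a closed embedding $X\hookrightarrow Y\times\bP^N$ followed by a projection --- to Grothendieck's GRR for the induced proper morphism of smooth ambient spaces; and (c) the normalization, which for $X=M$ smooth is immediate, since one may then take $E_\bullet$ to be a locally free resolution of $\Cal O_X$ itself and $ch^X_X=ch$, so $\tau_X(\Cal O_X)=ch(\Cal O_X)\cap(td(TX)\cap[X])=td(TX)\cap[X]$.

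The hard part is precisely steps (a) and (b): making the localized Chern character well-defined and establishing its functoriality. This is the genuine technical content of \cite{BFM}, in the spirit of MacPherson's graph construction \cite{M1}; everything else is bookkeeping with resolutions, blow-ups and the diagonal. Granting $td_*^{BFM}$, the final displayed formula is immediate: applying naturality to $\pi_X:X\to pt$ yields
\begin{align*}
\int_X td_*^{BFM}(\Cal O_X)&={\pi_X}_*\bigl(td_*^{BFM}(\Cal O_X)\bigr)=td_*^{BFM}\!\left(\sum_{j}(-1)^j[H^j(X,\Cal O_X)]\right)\\
&=\sum_{j}(-1)^j\dim_{\bC}H^j(X,\Cal O_X)=\chi(X,\Cal O_X)=p_a(X),
\end{align*}
using that $td_*^{BFM}:G_0(pt)=\bZ\to H_0^{BM}(pt)\otimes\bQ=\bQ$ is the rank map (since $td_*^{BFM}(\Cal O_{pt})=[pt]$) and that $\int_X={\pi_X}_*$ picks out the $0$-dimensional component.
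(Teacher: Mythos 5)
The paper does not prove this theorem at all: it is recalled as a known result with a citation to \cite{BFM}, so there is no internal proof to compare against, and your outline is essentially a faithful sketch of that original Baum--Fulton--MacPherson argument (uniqueness by d\'evissage, resolution of singularities and naturality; existence by embedding into a smooth ambient space, the localized Chern character from the graph construction, independence of embedding via the diagonal, and GRR for functoriality), with the genuinely hard analytic/homotopical content correctly identified and deferred to \cite{BFM}. Your derivation of $p_a(X)=\chi(X,\Cal O_X)=\int_X td_*^{BFM}(X)$ from naturality over $\pi_X: X \to pt$ is also the standard one (note only that it tacitly requires $X$ compact so that $\pi_X$ is proper, as the paper itself tacitly assumes).
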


\noindent
(3) \underline { Cappell--Shaneson's $L$-class} \cite{CS1, Sh} (cf. \cite{Y3})
\begin{thm}
There exists a unique natural transformation
$$L^{CS}_*:\Omega(-) \to H_*^{BM}(-)\otimes \bQ$$
such that for  smooth $X$
$$L^{CS}_*(\Cal {IC}_X) = L(TX) \cap [X].$$
Here $\Omega(X)$ is the abelian group of Youssin's cobordism classes of self-dual constructible complexes of sheaves on $X$.
$L^{GM}_*(X) := L^{CS}_*(\Cal {IC}_X)$ is the Goresky--MacPherson homology $L$-class of $X$.
$$\sigma^{GM}(X) = \int_X L^{GM}_*(X),$$
which is the Goresky--MacPherson theorem \cite{GM}.\\
\end{thm}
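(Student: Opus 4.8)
The plan is to mimic the strategy that produced the Chern and Todd transformations: first identify a convenient set of generators for the source functor $\Omega(-)$ on which the normalization condition already determines everything (giving uniqueness), and then construct $L^{CS}_*$ by prescribing its value on those generators and verifying that the prescription is compatible with the relations defining $\Omega(-)$ (giving existence). For the uniqueness half I would invoke Youssin's structure theorem for the cobordism group of self-dual constructible complexes: every class in $\Omega(X)$ is a $\bZ$-linear combination of classes $f_*\bigl(\Cal{IC}_{\overline V}(\Cal L)\bigr)$ with $f:\overline V\to X$ proper, $\overline V$ smooth projective, and $\Cal L$ a self-dual local system on a Zariski-open dense subset of $\overline V$ (equivalently, using resolution of singularities, twisted middle intersection-chain sheaves on subvarieties of $X$). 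By naturality the value of any such transformation on $f_*\bigl(\Cal{IC}_{\overline V}(\Cal L)\bigr)$ is $f_*$ applied to its value on $\overline V$ itself; the untwisted case is exactly the normalization hypothesis, and the twisted case is forced from it by the multiplicativity of the Hirzebruch $L$-class under finite coverings, in the spirit of the covering argument of Remark \ref{elliptic}. Hence at most one such natural transformation exists.

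For existence I would \emph{define} $L^{CS}_*$ on the above generators by the Goresky--MacPherson prescription: for $\overline V$ smooth, declare $L^{CS}_*\bigl(\Cal{IC}_{\overline V}(\Cal L)\bigr)$ to be $L(T\overline V)$ cupped with a multiplicative twisting class recording the signature of the self-dual pairing on $\Cal L$ (trivial for the constant sheaf), capped with $[\overline V]$, then push forward by $f_*$ and extend $\bZ$-linearly. The substantive point is well-definedness: the assignment must respect (i) the cobordism relations defining $\Omega(X)$ and (ii) the relations among generators arising from different resolutions of a fixed subvariety. Both are reduced, via Novikov additivity and the Chern--Hirzebruch--Serre multiplicativity of the signature in fibre bundles, to the cobordism invariance and fibre-multiplicativity of the Hirzebruch $L$-class; this is precisely where Cappell--Shaneson's analysis of how a self-dual complex decomposes along a Whitney stratification is used. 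Naturality under proper pushforward is then automatic, since $f_*$ is built into the definition.

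Granting this, the signature formula for compact $X$ is formal: applying $L^{CS}_*$ to $\Cal{IC}_X\in\Omega(X)$ and pushing to a point, naturality gives ${\pi_X}_*L^{CS}_*(\Cal{IC}_X)=L^{CS}_*({\pi_X}_*\Cal{IC}_X)$. The right-hand side is the $L$-class of the self-dual complex on $pt$ whose hypercohomology is the middle-perversity intersection cohomology $IH^*(X)$, i.e.\ the signature $\sigma^{GM}(X)$ of the Goresky--MacPherson intersection pairing, while the left-hand side is $\int_X L^{GM}_*(X)$ by the definition of the degree-zero pushforward. This recovers the Goresky--MacPherson theorem.

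I expect the main obstacle to be the well-definedness in the existence step: it rests on the genuinely geometric input that an arbitrary self-dual constructible complex is controlled, up to cobordism, by its restrictions to the strata of a Whitney stratification, and that the resulting stratum-by-stratum signature contributions assemble into a cobordism invariant. Once that structural fact and the multiplicativity properties of the $L$-class are available, uniqueness, naturality and the signature corollary are bookkeeping.
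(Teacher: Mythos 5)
First, a point of comparison: the paper does not prove this theorem at all. It is recalled as background from Cappell--Shaneson \cite{CS1} (cf. \cite{Sh}, \cite{Y3}), with the degree formula $\sigma^{GM}(X)=\int_X L^{GM}_*(X)$ attributed to Goresky--MacPherson \cite{GM}; the only neighbouring statement the paper actually discusses is the existence of $sd: K_0(\Cal{VAR}/-)\to\Omega(-)$ in the $y=1$ column of the unification theorem, which it stresses is ``not obvious at all'' and needs Youssin's work \cite{You} together with Corollary \ref{3proof}(2). So your proposal has to be measured against the cited Cappell--Shaneson/Youssin arguments, and there it has a genuine gap. The normalization condition only constrains the \emph{untwisted} classes $[\Cal{IC}_{\overline V}]$ for smooth $\overline V$, whereas $\Omega(X)$, even rationally, is generated by twisted classes $f_*[\Cal{IC}_{\overline V}(\Cal L)]$ with $\Cal L$ a self-dual local system. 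Your claim that the twisted values are ``forced'' by multiplicativity of the Hirzebruch $L$-class under finite coverings fails: a self-dual local system typically has infinite monodromy and is not trivialized by any finite covering, and by the Atiyah--Meyer twisted signature theorem the signature of $(\overline V,\Cal L)$ is \emph{not} determined by $\op{rank}\Cal L$ and $\int_{\overline V} L(T\overline V)$ --- it involves a characteristic class of the flat self-dual bundle itself. The covering trick of Remark \ref{elliptic} works there only because $T\widetilde E=\pi^*TE$ with constant coefficients; it has no analogue with twisted coefficients. Hence your uniqueness argument does not close, and this is exactly the delicate point of the theory.

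The existence half has the same defect and is, as written, essentially circular. The prescription ``$L(T\overline V)$ cupped with a multiplicative twisting class recording the signature of the self-dual pairing on $\Cal L$'' is not the correct twisted $L$-class (Meyer's formula contains a correction beyond the stalkwise signature, nontrivial already for rank-two symplectic local systems on curves), so the assignment would not even reproduce the right degrees, let alone respect Youssin's cobordism relations. Moreover, the facts you propose to ``use'' --- that an arbitrary self-dual constructible complex decomposes, up to cobordism, into twisted intersection-chain summands along a Whitney stratification, and that the resulting contributions assemble into a cobordism-invariant class --- are precisely the main structural theorems of Cappell--Shaneson \cite{CS1} and Youssin \cite{You}, not inputs one can invoke to prove this statement; and Novikov additivity plus Chern--Hirzebruch--Serre multiplicativity (the latter requiring trivial monodromy action on fiber cohomology) are too weak to substitute for them. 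The only part that is sound as it stands is the last step: once the transformation exists, naturality applied to $\pi_X:X\to pt$ does give $\sigma^{GM}(X)=\int_X L^{GM}_*(X)$, since the pushforward of $\Cal{IC}_X$ to a point is the class of the intersection form on $IH^*(X)$ and its $L$-class in degree zero is its signature.
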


In the following sense  our motivic characteristic class transformation 
$${T_y} _*: K_0(\Cal {VAR}/X) \to H_*^{BM}(X)\otimes \bQ[y] $$ ``unifies" the above three well-known characteristic classes of singular varieties. This could be a kind of positive partial answer to MacPherson's question of \emph{whether there is a unified theory of characteristic classes of singular varieties}, which was posed in his survey talk \cite {M2} at the 9th Brazilian Colloquium on Mathematics in 1973. 

\begin{thm} (A ``unification" of Chern, Todd and homology $L$-classes of singular varieties) 
\item
\noindent  (y = -1):  There exists a unique natural transformation $\epsilon: K_0(\Cal {VAR}/-) \to F(-)$ such that for $X$ nonsingular $\epsilon([X \xrightarrow {\op {id}} X]) = \jeden_X.$ And the following diagram commutes
$$\xymatrix{K_0(\Cal {VAR}/X)  \ar[dr]_ {{T_{-1}}_*}\ar[rr]^ {\epsilon} && F(X) \ar[dl]^{c_*^{Mac}\otimes \bQ}\\& H_*^{BM}(X)\otimes \bQ \:.}$$\\
\item
\noindent  (y = 0) : There exists a unique natural transformation $\ga: K_0(\Cal {VAR}/-) \to  G_0(-)$ such that for $X$ nonsingular $\ga([X \xrightarrow {\op {id}} X]) = [\Cal O_X].$ And the following diagram commutes
$$\xymatrix{K_0(\Cal {VAR}/X)  \ar[dr]_ {{T_0}_*}\ar[rr]^ {\ga} &&  G_0(X) \ar[dl]^{td_*^{BFM}}\\& H_*^{BM}(X)\otimes \bQ \:.}$$\\
\item
\noindent  (y = 1) : There exists a unique natural transformation $sd: K_0(\Cal {VAR}/-) \to \Omega (-)$
such that for $X$ nonsingular $sd ([X \xrightarrow {\op {id}} X]) = \left [\bQ_X[\op {2dim}X] \right ].$ And the following diagram commutes
$$\xymatrix{K_0(\Cal {VAR}/X)  \ar[dr]_ {{T_1}_*}\ar[rr]^ {sd} && \Omega(X) \ar[dl]^{L^{CS}_*}\\& H_*^{BM}(X)\otimes \bQ \:.}$$\\
\end{thm}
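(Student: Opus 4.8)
The plan is to establish each of the three commuting triangles by the same strategy, reducing everything to the generators of $K_0(\Cal{VAR}/X)$ furnished by Bittner's theorem, i.e. to classes $[V \xrightarrow{h} X]$ with $V$ smooth and $h$ proper. First I would construct the three ``intermediate'' natural transformations $\epsilon$, $\ga$ and $sd$ using part (2) of Corollary \ref{3proof}: for $\epsilon$ take the distinguished element $\phi_X := \jeden_X \in F(X)$ for $X$ smooth; for $\ga$ take $\phi_X := [\Cal O_X] \in G_0(X)$; for $sd$ take $\phi_X := [\bQ_X[2\dim X]] \in \Omega(X)$. In each case one must check the blow-up relation $q_*(\phi_{Bl_Y X}) - i_* q'_*(\phi_E) = \phi_X - i_*(\phi_Y)$: for $F(-)$ this is an elementary computation with constructible functions and the behavior of $\chi_c$ on the fibers of a blow-up; for $G_0(-)$ it is exactly the relation $q_*(\Omega^0_{Bl_Y X}) - i_*q'_*(\Omega^0_E) = \Omega^0_X - i_*(\Omega^0_Y)$ already quoted from Gros / Guill\'en--Navarro Aznar (the $p=0$ case, since $\Omega^0 = \Cal O$); for $\Omega(-)$ one uses Youssin's cobordism theory together with the standard decomposition of the constant sheaf on a blow-up into the constant sheaf on $X$ and a contribution supported on $E$. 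Uniqueness of each of $\epsilon,\ga,sd$ is automatic from Corollary \ref{3proof}(2).

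Next I would verify the commutativity of each triangle. Both $\epsilon,\ga,sd$ are natural transformations on $K_0(\Cal{VAR}/-)$, and so are ${T_{-1}}_*$, ${T_0}_*$, ${T_1}_*$ (the specializations of ${T_y}_*$ at $y=-1,0,1$ from Theorem \ref{theorem1}), as are $c_*^{Mac}\otimes\bQ$, $td_*^{BFM}$, $L_*^{CS}$ composed with $\epsilon,\ga,sd$ respectively. Since all of these commute with proper pushforward and $K_0(\Cal{VAR}/X)$ is generated (via the surjection $B$) by $[V \xrightarrow{h} X]$ with $V$ smooth and $h$ proper, it suffices to check each identity on such generators, and by naturality (pushing forward along $h$) it suffices to check it on $[V \xrightarrow{\op{id}} V]$ for $V$ smooth. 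There the verification is a comparison of normalization conditions: for $y=-1$, $(c_*^{Mac}\otimes\bQ)(\epsilon([V\xrightarrow{\op{id}}V])) = (c_*^{Mac}\otimes\bQ)(\jeden_V) = c(TV)\cap[V] = T_{-1}(TV)\cap[V] = {T_{-1}}_*([V\xrightarrow{\op{id}}V])$, using the IMPORTANT NOTE that $T_{-1} = c$; for $y=0$ one uses $td_*^{BFM}([\Cal O_V]) = td(TV)\cap[V] = T_0(TV)\cap[V]$ since $T_0 = td$; for $y=1$ one uses $L_*^{CS}(\Cal{IC}_V) = L(TV)\cap[V] = T_1(TV)\cap[V]$ since $T_1 = L$, noting that on a smooth variety $\Cal{IC}_V$ agrees (up to shift/normalization) with $\bQ_V[2\dim V]$, so that $sd([V\xrightarrow{\op{id}}V]) = [\bQ_V[2\dim V]] = [\Cal{IC}_V]$ in $\Omega(V)$.

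The main obstacle I expect is the $y=1$ case, for two reasons. First, one must pin down precisely the target functor $\Omega(-)$ of Youssin's self-dual cobordism classes and the map $sd$: checking that the assignment $V \mapsto [\bQ_V[2\dim V]]$ (for $V$ smooth) satisfies the blow-up relation in $\Omega(X)$ requires knowing how self-dual complexes behave under blow-up, which is genuinely more delicate than the $F(-)$ or $G_0(-)$ computations and leans on the structure of the derived category of constructible sheaves and Verdier duality. Second, one must be careful about normalization conventions (shifts by $2\dim$, Tate twists, and the factor relating $T_1 = L$ to the Cappell--Shaneson $L$-class as normalized in \cite{CS1, Sh}) so that the constants match on the nose; a sign or a power of $2$ discrepancy here is the kind of thing that has to be tracked explicitly. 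The $y=-1$ and $y=0$ triangles, by contrast, are comparatively routine once $\epsilon$ and $\ga$ are in hand: the blow-up relations are already available in the literature and the normalization matches are immediate from the IMPORTANT NOTE on $T_y$.
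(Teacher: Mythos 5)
Your plan follows essentially the same route as the paper: construct $\epsilon$, $\ga$, $sd$ via part (2) of Corollary \ref{3proof} (blow-up relations), reduce the commutativity of each triangle to the normalization on smooth generators via the surjection $B$ and naturality, and recognize that the only genuinely hard point is the existence of $sd$, which the paper itself says requires Youssin's work together with Corollary \ref{3proof}(2) and whose details it defers to \cite{BSY1}. Your identification of the $y=1$ case (behavior of self-dual complexes under blow-up, shift and normalization conventions relating $\bQ_X[2\dim X]$, $\Cal{IC}_X$ and $T_1=L$) as the crux matches the paper's own assessment exactly.
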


When $y = -1, 0$, it is straightforward to show the above. But, when it comes to the case when $y =1$, it is not straightforward at all, in particular to show the existence of $sd: K_0(\Cal {VAR}/-) \to \Omega (-)$ is ``not obvious at all".  Another thing is that we have to go through some details of Youssin's work \cite{You} and in fact we need (2) of the above Corollary \ref{3proof} again. We do not know any other way of proving the existence of $sd: K_0(\Cal {VAR}/-) \to \Omega (-)$. For the details see \cite{BSY1} (see also \cite{BSY2, SY}).\\

Finally we make the following :
\begin{rem}
\begin{enumerate}
\item
(y = -1):  ${T_{-1}}_*(X) = c_*^{Mac}(X)\otimes \bQ$\\

\item
(y = 0):  In general,  for a singular variety $X$ we have
 $$\Lambda_0^{mot}([X \xrightarrow {\op {id}_X} X])  \not =  [\Cal O_X].$$
  Therefore, in general, ${T_0}_*(X) \not = td_*^{BFM}(X)$. So, our ${T_0}_*(X)$ shall be called the Hodge--Todd class and denoted by $td_*^{H}(X)$. However, if $X$ is a Du Bois variety, i.e., every point of $X$ is a Du Bois singularity (note a nonsingular point is also a Du Bois singularity), we DO have
  $$\Lambda_0^{mot}([X \xrightarrow {\op {id}_X} X])  =  [\Cal O_X].$$ 
  This is because of the definition of Du Bois variety: $X$ is called a Du Bois variety if we have
  $$ \Cal O_X = gr_{\sigma}^0(DR(\Cal O_X)) \cong gr_F^0(\underline \Omega_{X}^*).$$ 
 Hence, for a Du Bois variety $X$ we have ${T_0}_*(X) = td_*^{BFM}(X).$ For example, S. Kov\'acs \cite{Kov} proved Steenbrink's conjecture that rational singularities are Du Bois, thus for the quotient $X$ of any smooth variety acted on by a finite group we have that  ${T_0}_*(X)  = td_*^{BFM}(X)$.

\item
(y = 1): In general, $sd([X \xrightarrow {\op {id}_X} X]) \not = \Cal {IC}_X$, hence  ${T_1}_*(X) \not = L_*^{GM}(X)$. So, our ${T_1}_*(X)$ shall be called the Hodge--$L$-class and denoted by $L_*^{H}(X)$. A conjecture is that ${T_1}_*(X) \not = L_*^{GM}(X)$ for a rational homology manifold $X$. \\
\end{enumerate}
\end{rem}

\section {A few more conjectures}

\begin{con} \label{conjecture1}
Any natural transformation without the normalization condition 
$$T: K_0(\Cal {VAR}/X) \to H_*^{BM}(X)\otimes \bQ[y] $$
is a linear combination of components ${{td_y}_*}_i:K_0(\Cal {VAR}/X) \to H_{2i}^{BM}(X)\otimes \bQ[y]$:
$$T = \sum_{i \geq 0} r_i(y) \, {{td_y}_*}_i \quad (r_i (y)\in \bQ[y]).$$
\end{con}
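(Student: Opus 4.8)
The plan is to reduce the statement, via Bittner's presentation of $K_0(\Cal{VAR}/-)$, to a classification of the characteristic classes of smooth varieties that are compatible with blow-ups, and then to carry out that classification by an additive analogue of Hirzebruch's genus argument.

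\emph{Step 1: reduction to values on smooth varieties.} By naturality, such a $T$ is determined by the classes $\phi_V:=T([V\xrightarrow{\op{id}}V])\in H_*^{BM}(V)\otimes\bQ[y]$ for $V$ smooth, since by Theorem~\ref{lambda}, PROOF~3, every generator of $K_0(\Cal{VAR}/X)$ may be written $[V\xrightarrow{h}X]$ with $V$ smooth and $h$ proper, whence $T([V\xrightarrow{h}X])=h_*\phi_V$. Conversely, Corollary~\ref{3proof}(2) says a natural transformation on $K_0(\Cal{VAR}/-)$ is \emph{the same datum} as a system $(\phi_V)_{V\in\Cal{SM}}$ that is invariant under isomorphisms and satisfies the blow-up relation $q_*\phi_{Bl_YV}-i_*q'_*\phi_E=\phi_V-i_*\phi_Y$ for every smooth center $Y\subset V$. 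So the statement is equivalent to: every such system has the form $\phi_V=\sum_{i\geq0}r_i(y)\bigl(T_y(TV)\cap[V]\bigr)_{2i}$ with $r_i(y)\in\bQ[y]$ independent of $V$, where ${td_y}_{*i}$ is by definition $[V\xrightarrow{\op{id}}V]\mapsto(T_y(TV)\cap[V])_{2i}$.

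\emph{Step 2: the values are characteristic classes.} The heart of the matter is to show $\phi_V=\widetilde c(TV)\cap[V]$ for a single inhomogeneous characteristic class $\widetilde c=\sum_j\widetilde c_j$, with $\widetilde c_j$ a weighted-degree-$j$ polynomial in Chern classes, coefficients in $\bQ[y]$ independent of $V$. Naturality for all proper maps of smooth varieties, together with the weak-factorization input behind Bittner's theorem (\cite{Bi}, \cite{AKMW}), should force $(\phi_V)$ to behave like a natural transformation out of the universal oriented theory on smooth varieties, i.e.\ the algebraic cobordism $\Omega_*(-)$ of Levine and Morel; after $\otimes\bQ$ that theory becomes Borel--Moore homology tensored with the Lazard ring, and transformations from it to $H_*^{BM}(-)\otimes\bQ[y]$ are exactly characteristic-class operations. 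Concretely one would either (a) show the Bittner blow-up relations follow from the double-point relations of algebraic cobordism, so $T$ on smooth proper maps factors through $\Omega_*(-)\otimes\bQ$, or (b) bypass cobordism and argue directly by deformation to the normal cone that $\phi_V$ must be local. I expect this step to be the real obstacle: $K_0(\Cal{VAR}/-)$ is not a homology theory, naturality supplies only proper pushforward (no pullback or Gysin maps a priori), and pinning down exactly how much the blow-up relation encodes — enough to impose locality — is the crux.

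\emph{Step 3: blow-up-invariant characteristic classes are Hirzebruch components.} Granting Step~2, one must determine which $\widetilde c$ yield blow-up-compatible systems. The inclusion ``$\bQ[y]$-span of $\{(T_y)_j\}_{j\geq0}$ $\subseteq$ blow-up-invariant'' is exactly the Gros / Guill\'en--Navarro Aznar fact used in PROOF~3 of Theorem~\ref{lambda} (\cite{Gr}, \cite{GNA}): $T_y$ satisfies the blow-up relation, $q_*,i_*,i_*q'_*$ preserve homological degree, so each graded piece $(T_y)_j$, hence every $\bQ[y]$-combination, does too. For the reverse inclusion I would evaluate the blow-up relation on the elementary diagrams — principally a point blown up inside $\bP^{n+1}$ (where $Bl_{pt}\bP^{n+1}$ is a $\bP^1$-bundle over $\bP^n$) and blow-ups of linear subspaces in products of projective spaces — extract the recursion among the characteristic numbers of $\widetilde c$, and show it forces, degree by degree, the ``one free parameter $r_j(y)$ per homological degree'' rigidity, the additive analogue of Hirzebruch's theorem (\cite[Theorem~10.3.1]{Hi}) used in the normalization argument before Theorem~\ref{theorem1}. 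A delicate point deserving separate care is the coefficient ring: specializing the indeterminate produces $\bQ$-valued (hence $\bQ[y]$-valued) natural transformations, such as the graded components of ${T_{-1}}_*=c_*^{Mac}\otimes\bQ$, and one must verify whether these lie in the $\bQ[y]$-span of the ${td_y}_{*i}$ or whether the $r_i$ are to be read in a larger ring. Finally, $\bQ[y]$-linear independence of the ${td_y}_{*i}$, and hence uniqueness of the representation $T=\sum_ir_i(y)\,{td_y}_{*i}$, follows by evaluating on $\bP^n\to pt$ for all $n$, which separates homological degrees, together with ${td_y}_{*0}(pt)=\chi_y\neq0$.
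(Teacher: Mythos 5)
You are attempting to prove Conjecture \ref{conjecture1}: the paper itself offers \emph{no proof} of this statement, only motivation by analogy with the classification theorems for $G_0(-)$ (\cite{Y0}) and for constructible functions (Theorem \ref{kmy}, \cite{KMY}), so there is no argument in the paper to compare yours against, and a complete argument would settle an open problem. Your proposal does not do that. Step 1 is a correct and useful reduction: by Corollary \ref{3proof}(2) (Bittner \cite{Bi}, weak factorization \cite{AKMW}) a natural transformation $T$ is equivalent to the system of classes $\phi_V=T([V\xrightarrow{\op{id}}V])$ for $V$ smooth, isomorphism-invariant and satisfying the blow-up relation. But Step 2 --- that any such system must be of the form $\widetilde c(TV)\cap[V]$ for a universal inhomogeneous polynomial $\widetilde c$ in Chern classes --- is precisely the content of the conjecture, and you leave it as a programme, as you yourself admit. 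Naturality supplies only proper pushforward and the blow-up relation; there are no pullbacks, no Gysin maps, and no module structure over a ring of operations with which to test $T$ and force locality, which is the kind of extra leverage available for $G_0(-)$ and $F(-)$ in the two known analogues and not obviously available for $K_0(\Cal{VAR}/-)$. Neither of your proposed remedies (factoring through Levine--Morel algebraic cobordism \cite{LM-book}, or locality via deformation to the normal cone) is carried out, nor is it clear that the blow-up relation alone implies either. So the crux is missing.

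The ``delicate point'' you flag in Step 3 is moreover not a side issue but a genuine defect of the literal statement, which any proof must address first: the specializations ${T_0}_*$ and ${T_{-1}}_*=c_*^{Mac}\otimes\bQ$, regarded as landing in $H_*^{BM}(-)\otimes\bQ\subset H_*^{BM}(-)\otimes\bQ[y]$, are natural transformations, yet they cannot be written as $\sum_{i}r_i(y)\,{{td_y}_*}_i$ with $r_i(y)\in\bQ[y]$: on $X=pt$ only the $i=0$ component survives, and $[pt\to pt]$ forces $r_0(y)=1$ while $[\bP^1\to pt]$ forces $r_0(y)\,(1-y)=\chi(\bP^1)=2$ (respectively $=1$ for ${T_0}_*$), which is impossible in $\bQ[y]$ and even in $\bQ(y)$. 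So the conjecture must be read with some compatibility with the parameter $y$ imposed on the admissible $T$, or with coefficients taken to be $\bQ$-linear operators on $\bQ[y]$ so that evaluations are allowed; your classification in Step 3 (the blow-up recursion on projective bundles, in the spirit of Hirzebruch's uniqueness theorem \cite{Hi} and of the Gros/Guill\'en--Navarro Aznar blow-up formula \cite{Gr}, \cite{GNA} used in PROOF 3) can only be attempted after this reformulation and after Step 2 are in place. As it stands, your text is a reasonable research plan for an open conjecture, not a proof.
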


This conjecture means that the normalization condition for smooth varieties imposed to get our motivic characteristic class can be \emph{basically dropped.}  This conjecture is motivated by the following theorems:

\begin{thm} (\cite{Y0})
Any natural transformation without the normalization condition  
$$T:G_0(-) \to H_*^{BM}(-)\otimes \bQ$$
 is a linear combination of components ${td_*^{BFM}}_i:G_0(-) \to H_{2i}^{BM}(-)\otimes \bQ$
$$T = \sum_{i \geq 0} r_i\, {td_*^{BFM}}_i \quad (r_i \in \bQ).$$
\end{thm}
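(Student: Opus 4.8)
The plan is to classify all such $T$ in three moves: reduce, via resolution of singularities and naturality, to the behaviour of $T$ on smooth varieties; show that on a smooth variety the value $T_{V}([\Cal E])$ must be a universal polynomial in the Chern classes of $\Cal E$ and of $TV$ evaluated against $[V]$; and pin down the rational coefficients of that polynomial by testing $T$ on products of projective spaces, where Thom's and Milnor's structure theorem (Theorem~\ref{cob-structure}) reduces the remaining question to finite linear algebra. Observe first that each graded component $td_{*i}^{BFM}$ is itself a natural transformation, because proper pushforward $f_{*}$ is grading-preserving on Borel--Moore homology; so the content of the theorem is precisely that these span the whole space of natural transformations.

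For the reduction, note that $G_{0}(pt)=\bZ$ while $H_{*}^{BM}(pt)\otimes\bQ$ is concentrated in degree $0$, so $T(pt)$ is multiplication by one scalar $r_{0}\in\bQ$. More generally, by d\'evissage $G_{0}(X)$ is generated by the classes $i_{*}[\Cal O_{W}]$ of integral closed subvarieties $W\subseteq X$, and for a resolution $\rho\colon\widetilde W\to W$ the usual computation of $R\rho_{*}\Cal O_{\widetilde W}$ shows that $\rho_{*}[\Cal O_{\widetilde W}]=[\Cal O_{W}]$ modulo classes supported in strictly smaller dimension. Induction on dimension then shows that $G_{0}(X)$ is generated by pushforwards $f_{*}(\alpha)$ with $f\colon V\to X$ proper, $V$ smooth, and $\alpha\in G_{0}(V)\cong K^{0}(V)$ (one may, using Chow's lemma, take $V$ quasi-projective). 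By naturality $T(f_{*}\alpha)=f_{*}\bigl(T_{V}(\alpha)\bigr)$, so $T$ is completely determined by the collection $\{T_{V}\}$ of its values on smooth varieties.

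The core of the argument is the second move: for $V$ smooth and $\Cal E$ locally free, $T_{V}([\Cal E])$ is forced to be a characteristic-class expression. Since $\Cal E\otimes\Cal O_{V}(m)$ is globally generated for $m\gg 0$ (with $\Cal O_{V}(1)$ very ample), $[\Cal E]\in K^{0}(V)$ is a $\bZ$-combination of pullbacks of tautological bundles under morphisms $V\to\mathrm{Gr}(r,N)\times\bP^{M}$; passing to flag bundles (the splitting principle) and using naturality of $T$ for the proper projections of Grassmann and projective bundles reduces the computation of $T_{V}([\Cal E])$ to these universal spaces, whose rational Borel--Moore homology is generated by Chern classes of tautological bundles; the tangent bundle $TV$ enters through an embedding $V\hookrightarrow\bP^{N}$, which expresses $TV$ in terms of ambient Chern classes. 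One concludes $T_{V}([\Cal E])=P\bigl(c(\Cal E),c(TV)\bigr)\cap[V]$ for a single universal polynomial $P$ over $\bQ$. \emph{This is where the difficulty lies}: one must show that naturality for proper morphisms alone is rigid enough to rule out any ``exotic'' natural transformation on smooth varieties and to force everything to factor through Chern classes. Moves one and three are, by contrast, the standard reductions used in the uniqueness statement of \cite{BFM}.

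For the last move, write $P\bigl(c(\Cal E),c(TV)\bigr)=\sum_{i}r_{i}\,\bigl(ch(\Cal E)\,td(TV)\bigr)_{n-i}+(\text{monomials of other shapes})$, where $n=\dim V$ and $\bigl(ch(\Cal E)\,td(TV)\bigr)_{n-i}\cap[V]=td_{*i}^{BFM}([\Cal E])$ on a smooth $V$. Evaluating both sides on the test varieties $\bP^{n_{1}}\times\cdots\times\bP^{n_{k}}$ with the line bundles $\Cal O(a_{1},\dots,a_{k})$ and their proper maps, and comparing characteristic numbers, one sees (using Theorem~\ref{cob-structure}, so that the projective spaces supply enough independent constraints) that the coefficients of all monomials not of the displayed shape must vanish and that the $r_{i}$ are uniquely determined. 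Since $T$ and $\sum_{i}r_{i}\,td_{*i}^{BFM}$ are both natural and agree on all the generators $f_{*}(\alpha)$, they coincide on every $G_{0}(X)$, which is the assertion.
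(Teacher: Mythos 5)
First, a point of reference: the survey you are working from does not prove this statement at all — it is quoted from \cite{Y0} — so there is no in-paper argument to match; your proposal has to stand on its own. It does not, and you have in fact located the failure yourself: the second move, which you flag as ``where the difficulty lies'', is precisely the content of the theorem, and the sketch you offer for it is not compatible with the hypotheses. The transformation $T$ is only assumed natural for \emph{proper pushforwards} on the covariant functors $G_0$ and $H_*^{BM}\otimes\bQ$; it comes with no contravariant functoriality whatsoever. Your reduction of $T_V([\Cal E])$ to ``universal spaces'' via classifying maps $V\to \mathrm{Gr}(r,N)\times\bP^M$, and your use of the splitting principle, both rest on pullbacks: knowing $T$ on a Grassmannian says nothing about $T_V(g^*[\Cal Q])$, since covariant naturality only relates $T$ to $g_*$ (and $g_*$ on rational Borel--Moore homology is far from injective, so even pushing forward loses the class you want). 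Likewise, naturality for the proper projection $\bP(\Cal E)\to V$ relates $T_V(\pi_*\beta)$ to $T_{\bP(\Cal E)}(\beta)$, which merely transfers the problem to a bigger variety over $V$, not to $\bP^N$ or a Grassmannian. So the key rigidity assertion --- that pushforward-naturality alone forces $T_V([\Cal E])=P(c(\Cal E),c(TV))\cap[V]$ for a universal polynomial $P$ --- is asserted, not proved, and the proof of the theorem (as in \cite{Y0}, and as in the analogous constructible-function result of \cite{KMY}) requires a genuinely different mechanism than the one you sketch.

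There are also secondary problems in the third move even granting the second. Comparing \emph{characteristic numbers} only tests the degree-zero component of $T_V$ after pushing to a point; a discrepancy concentrated in positive homological degrees is invisible to such tests, so you must compare full classes in $H_*^{BM}(\prod_i\bP^{n_i};\bQ)$ (which is fine for these test spaces, but only once the $V$-independent polynomial shape from move two is secured). The appeal to Theorem \ref{cob-structure} is likewise not the right tool: the Thom--Milnor structure theorems concern cobordism classes, i.e.\ tangential characteristic numbers, whereas here the auxiliary class $[\Cal E]$ is part of the data, and in any case knowing that $T$ and $\sum_i r_i\,{td_*^{BFM}}_i$ agree on products of projective spaces does not propagate to all varieties without the universal-polynomial statement. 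The first move (d\'evissage, resolution, $\rho_*[\Cal O_{\widetilde W}]=[\Cal O_W]$ modulo lower-dimensional classes, hence generation of $G_0(X)$ by $f_*\alpha$ with $V$ smooth and $f$ proper) is fine, but it only shows $T$ is determined by its values on smooth varieties; the heart of the theorem is what those values can be, and that is exactly the step left open.
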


\begin{thm}\label{kmy} (\cite{KMY})
Any natural transformation without the normalization condition 
$$T:F(-) \to H_*^{BM}(-)\otimes \bQ$$
is a linear combination of components ${c_*^{Mac}}_i \otimes \bQ:G_0(-) \to H_{2i}^{BM}(-)\otimes \bQ$ of the \underline {rationalized} MacPherson's Chern class $c_*^{Mac} \otimes \bQ$ (i.e., a linear combination of ${c_*^{Mac}}_i $ mod torsion):
$$T = \sum_{i \geq 0} r_i\, {c_*^{Mac}}_i \otimes \bQ \quad (r_i \in \bQ) .$$
\end{thm}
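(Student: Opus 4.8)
\emph{Proof proposal.} The plan is to imitate MacPherson's own uniqueness argument for $c^{Mac}_*$: reduce everything to smooth varieties, then pin down the possible values of $T$ using naturality together with the extra ``$\chi_c$-relations'' that are visible on $F(-)$ but not on $K_0(\Cal{VAR}/-)$.

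\textbf{Step 1 (reduction to smooth varieties).} First I would observe that $T$ is completely determined by the homology classes $\phi_V:=T(\jeden_V)\in H_*^{BM}(V)\otimes\bQ$ as $V$ ranges over \emph{smooth} varieties. Indeed, by resolution of singularities and induction on dimension, any constructible function on any $X$ is a $\bZ$-linear combination of classes $h_*\jeden_V$ with $h\colon V\to X$ proper and $V$ smooth, up to constructible functions supported on proper closed subsets (handled inductively); naturality of $T$ for the proper maps $h$ then expresses everything through the $\phi_V$. Equivalently, composing with the surjective natural transformation $K_0(\Cal{VAR}/-)\twoheadrightarrow F(-)$, $[V\xrightarrow{h}X]\mapsto h_*\jeden_V$ (well defined because $\chi_c$ is additive), one is in the setting of Corollary \ref{3proof}(2), so $T$ corresponds to a family $\{\phi_V\}_{V\text{ smooth}}$ subject to the Bittner blow-up relation.

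\textbf{Step 2 (the constraints, and the Chern-class form).} A direct fiberwise computation of $\chi_c$ shows that a natural transformation on $F(-)$ automatically satisfies, besides the Bittner blow-up relation, the \emph{product relation} $pr_*\,T(\jeden_{X\times W})=\chi(W)\,T(\jeden_X)$ for $W$ smooth projective, coming from $pr_*\jeden_{X\times W}=\chi(W)\jeden_X$. The heart of the argument is to show that these relations force, for each $n$,
$$\phi_V=\sum_{j=0}^{n} r_{n-j}\,c_j(TV)\cap[V]\qquad(\dim V=n),$$
for \emph{universal} rationals $r_0,r_1,\dots$, i.e.\ that $\phi_V$ is a linear combination of the pure Chern classes of $TV$ with coefficients depending only on the codimension, not on $V$. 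I would first treat smooth projective $V$: via a splitting-principle reduction (pass to flag bundles of high-rank bundles, and use that the stable tangent bundle is pulled back from a Grassmannian, whose cobordism class is by Theorem \ref{cob-structure} a $\bQ$-polynomial in the classes $[\bP^k]$) one reduces the universal expression for $\phi_V$ to its values on products of projective spaces, where the blow-up relation (applied to blow-ups of linear subspaces, whose exceptional divisors and complementary projections are again projective-space bundles) together with the product relation determine $\phi$ recursively, yielding both the displayed form and the independence of the $r_j$ on $n$. The general smooth case is then reduced to the projective one by choosing a smooth compactification with simple normal crossing boundary $D=\bigcup D_i$, applying inclusion--exclusion $\jeden_{\bar V}=\jeden_V+\sum_{\emptyset\neq I}(-1)^{|I|+1}\jeden_{D_I}$, and using naturality for the proper inclusions $D_I\hookrightarrow\bar V$ together with induction on dimension.

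\textbf{Step 3 (conclusion) and the main obstacle.} With the universal constants $r_j$ in hand, MacPherson's theorem provides the natural transformation $c^{Mac}_*\otimes\bQ$ with $(c^{Mac}_*)_j(\jeden_V)=c_{\dim V-j}(TV)\cap[V]$ on smooth $V$, so $T-\sum_{j\ge0}r_j\,(c^{Mac}_*)_j\otimes\bQ$ is a natural transformation $F(-)\to H_*^{BM}(-)\otimes\bQ$ vanishing on every $\jeden_V$ with $V$ smooth; by Step 1 it vanishes identically, which is the assertion. The serious difficulty is Step 2: showing that $\phi_V$ must be a characteristic class \emph{of the tangent bundle} at all --- there do exist naturality-respecting homology-valued assignments on smooth varieties that are not of this form (e.g.\ ones built from the number of connected components), and it is precisely the $\chi_c$-product relation available on $F(-)$, but not on $K_0(\Cal{VAR}/-)$ (compare Conjecture \ref{conjecture1}), that excludes them --- and then showing that only \emph{linear} combinations of the $c_j$'s survive, ruling out Todd- or $L$-type combinations involving products of Chern classes. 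This is where one needs the full interplay of the blow-up presentation, the product relation, the splitting principle, and Thom's structure theorem \ref{cob-structure}.
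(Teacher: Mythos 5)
First, a point of reference: the paper itself contains no proof of Theorem \ref{kmy}; it is quoted from the cited note of Kennedy--McCrory--Yokura, and the only information the paper gives about that proof is the remark immediately following the theorem, namely that it \emph{does not appeal to resolution of singularities at all} (this is exactly what makes the result interesting there, since it yields uniqueness of $c_*^{Mac}\otimes \bQ$ without resolution). Your Step 1 and Step 3 are built on resolution of singularities and on the Bittner presentation, so even if your outline were complete it would be a genuinely different route from [KMY]; that by itself is not an error, but it means you cannot lean on the paper for the missing part.

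The genuine gap is Step 2, which you yourself flag as ``the serious difficulty'': nothing in the proposal actually proves that $\phi_V=T(\jeden_V)$ has the universal form $\sum_j r_{n-j}\,c_j(TV)\cap[V]$ on smooth $V$. The mechanism you offer --- splitting principle, flag bundles, classifying maps to Grassmannians, and Thom's structure theorem \ref{cob-structure} --- is a mechanism for pinning down \emph{numerical} invariants (genera, characteristic numbers): cobordism arguments determine degrees, not classes in $H_*^{BM}(V)\otimes\bQ$, and the classifying map of $TV$ to a Grassmannian is not a proper algebraic map to which naturality of $T$ applies, so knowing $T$ on products of projective spaces does not, by any argument you give, determine the class $\phi_V$ \emph{on} $V$. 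The only constraints available are pushforward identities $T(f_*\jeden_V)=f_*\phi_V$ for proper algebraic $f$ (blow-ups, fibrations, finite covers, inclusions), and showing that these force $\phi_V$ to be a tangential characteristic class with universal coefficients is precisely the content of the theorem restricted to smooth varieties --- indeed the analogous statement with only the blow-up/Bittner relations (i.e., for $K_0(\Cal{VAR}/-)$) is exactly the open Conjecture \ref{conjecture1}, so the extra $\chi_c$-relations must be exploited in an essential, explicit way, not merely invoked. As it stands, Steps 1 and 3 are routine reductions and Step 2 is a plan rather than a proof; the heart of the theorem is still missing, and the paper's remark tells you that the actual [KMY] argument is organized differently (no resolution), so it cannot simply be transplanted to fill this hole in your framework.
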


\begin{rem} The above Theorem \ref{kmy} certainly implies the uniqueness of such a transformation $c_*^{Mac} \otimes \bQ$ satisfying the normalization. The proof of Theorem \ref{kmy} DOES NOT appeal to the resolution of singularities at all, therefore modulo torsion the uniquess of the MacPherson's Chern class transformarion $c_*^{Mac}$ is proved without using the resolution of singularities. However, in the case of integer coefficients, as shown in \cite{M1}, the uniqueness of $c_*^{Mac}$ uses the resolution of singualrities and as far as the author knows, it seems that there is no proof available without using resolution of singularities. Does there exist any mysterious connection between resolution of singularities and finite torsion ? \footnote{A comment by J. Sch\"urmann: ``There is indeed a relation between resolution of singularities and torsion information: in \cite{Totaro} B. Totaro shows by resolution of singularities that the fundamental class $[X]$ of a complex algebraic variety $X$ lies in the image from the complex cobordism $\Omega^U(X) \to H_*(X, \bZ)$. And this implies some non-trivial topological restrictions, i.e., all odd-dimensional elemenets of the Steenrod algebra vanish on $[X]$ viewed in $H_*(X, \bZ_p)$."}
\end{rem}
Furthermore hinted by these two theorems, it would be natural to speculate the following ``linearity" on the Cappell--Shaneson's L-class also:\\

\begin{con}
Any natural transformation without the normalization condition 
$$T:\Omega(-) \to H_*^{BM}(-)\otimes \bQ$$
is a linear combination of components ${_*^{CS}}_i :\Omega(-) \to H_{2i}^{BM}(-)\otimes \bQ$:
$$T = \sum_{i \geq 0} r_i\, {L_*^{CS}}_i \quad (r_i \in \bQ) .$$\\
\end{con}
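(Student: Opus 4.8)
The plan is to follow the template set by the two preceding theorems (the $G_0$-case of \cite{Y0} and the $F$-case of Theorem \ref{kmy}), adapting each step to Youssin's cobordism group $\Omega(-)$ of self-dual constructible complexes. As in those arguments, naturality (compatibility with proper pushforward) should reduce everything to two test situations: the behaviour of $T$ over a point, and its behaviour on the self-dual constant sheaf of a smooth variety.

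\emph{Step 1 (the point).} First I would analyze $\Omega(pt)\otimes\bQ$. A self-dual bounded complex of $\bQ$-vector spaces over a point carries, up to Youssin cobordism, essentially only a signature-type invariant; rationally the finer Witt information is killed, so $\Omega(pt)\otimes\bQ$ is one-dimensional, spanned by the class of $\bQ_{pt}$. Hence $T(pt)\colon \Omega(pt)\otimes\bQ\to H_0(pt)\otimes\bQ=\bQ$ is forced to be a scalar multiple $r_0$ of ${L_*^{CS}}_0(pt)=\sigma$. (If $\Omega(pt)\otimes\bQ$ turned out to be larger, one would instead have to argue directly that any natural $T$ must factor through the signature; I expect the one-dimensionality to hold.)

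\emph{Step 2 (smooth varieties).} For $X$ smooth I would show, exactly as in the $G_0$ and $F$ cases, that $T([\bQ_X[2\dim X]])=c\ell(TX)\cap[X]$ for a characteristic class $c\ell$ which is a polynomial in the Pontryagin classes of $TX$ — self-duality, being an oriented/$L$-theoretic feature, is what restricts us to Pontryagin (rather than all Chern) classes. To identify $c\ell$ one tests on products of even-dimensional complex projective spaces $\bP^{2n_1}\times\cdots\times\bP^{2n_k}$, which by Theorem \ref{cob-structure}(1) generate $\Omega^{SO}\otimes\bQ$; using the projection formula together with the known $H_*^{BM}$ of these spaces, the only remaining freedom is a sequence of rational coefficients, i.e. $c\ell=\sum_i r_i L_i$ with $L_i$ the $2i$-dimensional component of the total Hirzebruch $L$-class $T_1$. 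This is precisely $\sum_i r_i\,{L_*^{CS}}_i$ evaluated on the smooth generators.

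\emph{Step 3 (propagation) and the main obstacle.} Finally I would propagate from smooth generators to all of $\Omega(X)$: one needs that $\Omega(X)\otimes\bQ$ is generated by classes $f_*[\bQ_V[2\dim V]]$ for proper $f\colon V\to X$ with $V$ smooth, after which naturality of both $T$ and $L^{CS}_*$ forces $T=\sum_i r_i\,{L_*^{CS}}_i$ on all of $\Omega(X)$. I expect this generation statement to be the hard part. For $G_0(X)$ the generators $[\Cal O_V]$, and for $F(X)$ the generators $\jeden_W$, are completely elementary, but Youssin's cobordism classes of self-dual complexes are delicate — indeed the construction of $sd\colon K_0(\Cal{VAR}/-)\to\Omega(-)$ was flagged above as ``not obvious at all'' — so there may be no alternative to working carefully through \cite{You} and leaning on resolution of singularities together with Corollary \ref{3proof}(2), in contrast to the resolution-free proof available in the $F$-case of Theorem \ref{kmy}. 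A secondary point needing care is the verification in Step 2 that self-duality genuinely forces $c\ell$ into the Pontryagin subalgebra, so that the resulting linear combination lands in the span of the ${L_*^{CS}}_i$ and not in some strictly larger family of homology operations.
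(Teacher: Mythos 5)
Note first that the statement you are proving is stated in the paper as a \emph{conjecture}: the author offers no proof at all, only the motivation that the analogous linearity statements are theorems for $G_0(-)$ (Yokura) and for $F(-)$ (Kennedy--McCrory--Yokura). So there is no argument in the paper to compare yours with; what you have written is a plan for an open problem, and as such it has to be judged on whether its steps could plausibly be carried out. Two of them could not, as written. In Step 2 you assume exactly the content that makes the two known cases nontrivial theorems: that naturality alone forces $T([\bQ_X[2\dim X]])$ to be of the form $c\ell(TX)\cap [X]$ for a characteristic class $c\ell$ of the tangent bundle. In the $G_0$ and $F$ cases this is the heart of the proof and uses arguments specific to those functors, not a formal consequence of covariance; nothing analogous is known for Youssin's $\Omega(-)$. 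Moreover, even granting such a $c\ell$, your identification of it is a non sequitur: homogeneous degree-$2i$ polynomials in the Pontryagin (or Chern) classes form a space of dimension equal to the number of partitions of $i$, so knowing the classes $c\ell(TX)\cap[X]$ on products of even projective spaces does not collapse $c\ell$ to a scalar multiple of the $L$-class component $L_i$ degree by degree; in the proven cases that collapse comes from exploiting naturality under many more proper morphisms, which you have not done.

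The more serious obstruction is Step 3. For $F(X)$ and $G_0(X)$ the elementary generators $\jeden_W$ and $[\Cal O_V]$ exist, but $\Omega(X)$ is generated by cobordism classes of self-dual constructible complexes which, as recalled in the quotation from Sch\"urmann at the end of the paper, include the intermediate extensions of arbitrary \emph{self-dual local systems} on strata. There is no reason (and no known result) that such classes lie, even rationally, in the span of pushforwards $f_*[\bQ_V[2\dim V]]$ for proper $f:V\to X$ with $V$ smooth; the subgroup generated by constant-coefficient classes is essentially the image of the transformation $sd: K_0(\Cal{VAR}/X)\to \Omega(X)$, whose mere construction the paper already flags as ``not obvious at all,'' and which is certainly not expected to be surjective. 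Without such a generation statement your propagation step collapses, and the conjecture would have to be attacked on all of $\Omega(X)$ directly --- which is precisely why it remains a conjecture. (Step 1 is the only unproblematic part: rationally the Witt-type invariants over a point do reduce to the signature.)
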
 

\section{Some more remarks}

For complex algebraic varieties there is another important homology theory. That is Goresky--MacPherson's {\bf Intersection Homology Theory} $IH$ \cite{GM}(see also \cite{KW}), which satifsies all the properties which the ordinary (co)homology theory for nonsingular varieties have, in particular the Poincar\'e duality holds, in contrast to the fact that in general it fails for the ordinary (co)homology theory  of singular varieties. In order that the Poincar\'e duality theorem holds, one needs to control cycles according to \emph{perversity}, which is sensitive to, or ``control",  complexity of singularities. M. Saito showed that $IH$ satisfies pure Hodge structure just like the cohomology satisfies the pure Hodge structure for compact smooth manifolds (see also \cite{CaMi1,CaMi2}). In this sense, $IH$ is a convenient gadget for possibly singular varieties, and using the $IH$, we can also get various invariants which are sensitive to the structure of given possibly singular varieties. For the history of $IH$, see Kleiman's survey article \cite{Kl}, and for {\bf $L_2$-cohomology }very closely related to the intersection homology, e.g., see \cite{CGM, Go, Lo1, SS, SZ}. 
Thus for the category of compact complex algebriac varieties two competing machines are available:\
\begin{itemize}

\item {\bf Ordinary (Co)homology \, + \, Mixed Hodge Structures}. \
 \item{\bf Intersection Homology \, +  \, Pure Hodge Structures}.\
\end{itemize}

Of course, they are the same for the subcategory of compact smooth varieties.\\
 
So, for singular varieties one can introduce the similar invariants using $IH$; in other words, one can naturally think of the $IH$-version of the Hirzebruch $\chi_y$ genus, because of the pure Hodge structure, denote by $\chi^{IH}_y$: Thus we have invariants $\chi_y$-genus and $\chi^{IH}_y$-genus. As to the class version of these, one should go through the derived category of mixed Hodge modules, because the intersection homology sheaf lives in it. Then it is obvious that the difference between these two genera or between the class versions of theses two genera should come from the singularities of the given variety. For such an investigation, see Cappell--Libgober--Maxim--Shaneson \cite{CMS1, CMS2, CLMS1, CLMS2}. \\

The most important result is the so-called ``Decomposition Theorem" of Beilinson--Bernstein--Deligne--Gabber \cite{BBDG}, \, which was conjectured by I.  M. \, Gelfand and R. MacPherson. A more geometric proof of this is given in the above mentioned paper \cite{CaMi1} of M. de Cataldo and L. Migliorini.\\
 
Speaking of the intersection homology, the general category for $IH$ is the catgeory of pseudo-manifolds and the canonical and well-studied invariant for pseudo-manifolds is the signature, because of the Poincar\'e duality of $IH$. Banagl's monograph \cite{Ba} is recommended on this topic and also see \cite{Ba2, Ba3, Ba4, BCS, CSW, CW, Wei} etc.. Very roughly speaking, ${T_y}_*$ is a kind of ``deformation" or ``perturbation" of Baum--Fulton--MacPherson's Riemann--Roch.  It would be interesting to consider a similar kind of ``deformation" of $L$-class theory defined on the (co)bordism theory of pseudo-manifolds. \footnote{A commnet by J. Sch\"urmann: ``A deformation of the $L$-class theory seems not reasonable. Only the signature = $\chi_1$-genus factorizes over the oriented cobordism ring $\Omega^{SO}$, so that this invariant is of more topological nature related to stratified spaces. For the other looking-for (``deformation") invariants one needs a complex algebraic or analytic structure. So what is missing up to now is a suitable theory of almost complex stratified spaces."} \\
 
Finally, since we started the present paper with counting, we end with posing the following question:
how about counting psuedo-manifolds respecting the structure of psuedo-manifolds, i.e., 
$$\text {\bf Does ``stratified counting" $c_{stra}$ make a sense  ?}$$ 
For complex algebraic varieties, which are psuedo-manifolds, the algebraic counting $c_{alg}$ (using Mixed Hodge Theory = Ordinary (Co)homology Theory + Mixed Hodge Structure) in fact ignores the stratification. So, in this possible problem, one should consider Intersection Homology + Pure Hodge Structure, although the intersection homology \emph {is a topological invariant, thus in particular independent of the stratification}. \\

J. Sch\"urmann provides one possible answer for the above question -- \emph{Does ``stratified counting" $c_{stra}$ make a sense  ?} (Since it is a bit long, it is cited just below, not as a footnote.):
\begin{itemize}
\item[] ``One possible answer would be to work in the complex algebraic context with a fixed (Whitney) stratification $X_{\bullet}$, so that the closure of a stratum $S$ is a union of strata again. Then one can work with the Grothendieck group $K_0(X_{\bullet})$ of $X_{\bullet}$-constructible sets, i.e., those which are a union of such strata. The topological additive counting would be related again to the Euler characteristic and the group $F(X_{\bullet})$ of $X_{\bullet}$-constructible functions. A more sophisticated version is the Grothendieck group $K_0(X_{\bullet})$ of $X_{\bullet}$-constructible sheaves (or sheaf complexes). These are generated by classes $j_!L_S$ for $j: S \to X$ , the inclusion of a stratum $S$, and $L_S$ a local system on $S$, and also by the intermediate extensions ${j_!}_*L_S$, which are perverse sheaves. In relation to signature and duality, one can work with the corresponding cobordism group $\Omega(X_{\bullet})$ of Verdier self-dual $X_{\bullet}$-constructible sheaf complexes. These are generated by ${j_!}_*L_S$, with $L_S$ a self-dual local system on $S$. Finally one can also work with the Grothendieck group $K_0(MHM(X_{\bullet}))$ of mixed Hodge modules, whose underlying rational complex is $X_{\bullet}$-constructible. This last group is of course not a topolological invariant."\\
\end{itemize}

We hope to come back to the problem of a possible ``stratified counting" $c_{stra}$.\\

\noindent
{\bf Acknowledgements:} This paper is based on the author's talk at the workshop ``Topology of Stratified Spaces" held at MSRI, Berkeley, September 8 - 12. He woud like to thank the organizers (Greg Friedman, Eug\'enie Hunsicker, Anatoly Libgober, and Laurentiu Maxim) for  inviting him to the workshop. He would also like to thank the referee and J\"org Sch\"urmann for their careful reading the paper and valuable comments and suggestions, and G. Friedmann and L. Maxim for their valuable comments and suggestions on an earlier version of the paper.

\end{document}